%-------------------------------------------%
% version accepted in CPDE
% prepared according to their style
% David's modifications
% 17 March 2016
%-------------------------------------------%
%
\documentclass{article}
\usepackage[affil-it]{authblk}
\usepackage{amssymb,amsmath,amsthm}
\usepackage{graphicx}
\usepackage{hyperref}
\usepackage[margin=1in]{geometry}
\usepackage{stackrel}
\usepackage{tensor}
\newcommand{\ie}{\emph{i.e.}}
\newcommand{\eg}{\emph{e.g.}}
\newcommand{\cf}{\emph{cf.}}
\newcommand{\Nat}{\mathbb{N}}
\newcommand{\Real}{\mathbb{R}}

\newcommand{\Int}{\mathbb{Z}}
\newcommand{\sii}{L^2}
\newcommand{\Dom}{\mathsf{D}}

\newcommand{\rot}{\mathop{\mathrm{rot}}\nolimits}
\newcommand{\dist}{\mathop{\mathrm{dist}}\nolimits}
\newcommand{\divergence}{\mathop{\mathrm{div}}\nolimits}
\newcommand{\supp}{\mathop{\mathrm{supp}}\nolimits}
\newcommand{\eps}{\varepsilon}

\newcommand{\Euler}{\mathrm{e}}
\newcommand{\der}{\mathrm{d}}
\newcommand{\chart}{\pi}
\newtheorem{Lemma}{Lemma}[section]
\newtheorem{Theorem}{Theorem}[section]
\newtheorem{Proposition}{Proposition}[section]
\newtheorem{Corollary}{Corollary}[section]
\theoremstyle{definition}
\newtheorem{Remark}{Remark}[section]
\numberwithin{equation}{section}
%
%
% for remarks
\usepackage[normalem]{ulem}
\usepackage{color}
\definecolor{DarkGreen}{rgb}{0,0.5,0.1}
\definecolor{DarkBlue}{rgb}{0,0.1,0.7}
\definecolor{LightRed}{rgb}{1.00,0.04,0.13}

\newcommand\soutD{\bgroup\markoverwith
{\textcolor{DarkBlue}{\rule[.5ex]{2pt}{1pt}}}\ULon}
\newcommand{\Hm}[1]{\leavevmode{\marginpar{\tiny%
$\hbox to 0mm{\hspace*{-0.5mm}$\leftarrow$\hss}%
\vcenter{\vrule depth 0.1mm height 0.1mm width \the\marginparwidth}%
\hbox to
0mm{\hss$\rightarrow$\hspace*{-0.5mm}}$\\\relax\raggedright #1}}}

\newcommand\soutC{\bgroup\markoverwith
{\textcolor{LightRed}{\rule[.5ex]{2pt}{1pt}}}\ULon}

\begin{document}
%
%-------%
% TITLE %
%-------%
%------------------------------------------%
%------------------------------------------%
\title{\textbf{\Large
The Hardy inequality and the heat equation
with magnetic field in any dimension}}

%-------------------------------
% Authors's affiliation
%--------------------------------

\author{Cristian Cazacu$^{a}$ \ and \ David Krej\v{c}i\v{r}\'ik$^{b}$}

\date{
\vspace{-5ex}
\small
\emph{
\begin{quote}
\begin{itemize}
\item[$a)$] 
Department of Mathematics and Informatics, Faculty of Applied Sciences, University Poli\-teh\-ni\-ca of Bucharest,
Splaiul Independentei 313, 060042 Bucharest, 
\& 
Simion Stoilow Institute of Mathematics of the Romanian Academy, Research Group of the Project PN-II-ID-PCE-2011-3-0075,
 21 Calea Grivitei Street, 010702 Bucharest, Romania;
\texttt{\emph{cristi\_cazacu2002@yahoo.com}}
\\
\item[$b)$]
Department of Theoretical Physics, Nuclear Physics Institute ASCR,
25068 \v{R}e\v{z}, Czech Republic;
\texttt{\emph{krejcirik@ujf.cas.cz}}%
\end{itemize}
\end{quote}
}
\vspace{-3ex}}

\maketitle

\begin{abstract}
\noindent
In the Euclidean space of any dimension~$d$,
we consider the heat semigroup generated by the magnetic
Schr\"odinger operator from which an inverse-square potential
is subtracted in order to make the operator critical
in the magnetic-free case.
Assuming that the magnetic field is compactly supported,
we show that the polynomial large-time behaviour of the heat semigroup
is determined by the eigenvalue problem
for a magnetic Schr\"odinger operator
on the $(d-1)$-dimensional sphere whose vector potential
reflects the behaviour of the magnetic field at the space infinity.
From the spectral problem on the sphere,
we deduce that in $d=2$ there is an improvement of the decay rate of
the heat semigroup by a polynomial factor
with power proportional to the distance
of the total magnetic flux to the discrete set of flux quanta,
while there is no extra polynomial decay rate in higher dimensions.
To prove the results, we establish new magnetic Hardy-type inequalities
for the Schr\"odinger operator and develop the method of self-similar variables
and weighted Sobolev spaces for the associated heat equation.
%
%\bigskip
%\begin{itemize}
%\item[\textbf{Keywords:}]
%magnetic Schr\"odinger operator, Hardy inequality, heat equation,
%large-time behaviour of solutions, Aharonov-Bohm magnetic field,
%self-similarity transformation, magnetic field in any dimension,
%exact and closed magnetic differential forms,
%\txtD{norm-resolvent convergence}
%\item[\textbf{MSC (2010):}]
%Primary: 35J10; 35K05;
%Secondary: 35A23; 35C06; 35P20; 35Q40
%\end{itemize}
%
\end{abstract}
%

%\newpage
%\tableofcontents

%------------------------------------------%
%------------------------------------------%
%
%\newpage
%---------------------%
\section{Introduction}
%---------------------%
%
This paper is concerned with the large-time behaviour of
the heat semigroup
\begin{equation}\label{semigroup}
  \Euler^{-t H_B}
\end{equation}
generated by the magnetic Schr\"odinger operator
\begin{equation}\label{Schrodinger}
  H_{B} = \big(-i\nabla_{\!x}-A(x)\big)^2 - \frac{c_d}{|x|^2}
  \qquad \mbox{in} \qquad
  \sii(\Real^d)
  \,.
\end{equation}
The relationship between
the \emph{magnetic potential} ($1$-form) $A:\Real^d \to \Real^d$
and the associated \emph{magnetic tensor} ($2$-form)~$B$ is standard,
through the exterior derivative
\begin{equation}\label{magnet.intro}
  B = \der A
  \,.
\end{equation}
The latter is compatible because of the (second) Maxwell equation
(Gauss' law for magnetism reflecting the absence of magnetic monopoles)
\begin{equation}\label{Gauss}
  \der B = 0
  \,,
\end{equation}
whose mathematical meaning is that~$B$ is a closed form.
The dimensional quantity~$c_d$ in~\eqref{Schrodinger}
is the best constant in
the \emph{classical Hardy inequality}
\begin{equation}\label{Hardy}
  \forall \psi \in C_0^\infty(\Real^d)
  \,, \quad
  \int_{\Real^d} |\nabla\psi(x)|^2 \, \der x
  \geq c_d
  \int_{\Real^d} \frac{|\psi(x)|^2}{|x|^2} \, \der x
  \,.
\end{equation}
It is well known that 
$$
  c_d = \left(\frac{d-2}{2}\right)^2
$$
if $d \geq 2$ (\eqref{Hardy}~holds as a trivial inequality if $d=2$).
We could conventionally put also $c_1=0$, 
but the one-dimensional situation will not be
considered in this paper, because there is no magnetic field in~$\Real$.

Clearly, $u(x,t):=\Euler^{-t H_B} u_0(x)$ is a solution of the Cauchy problem
\begin{equation}\label{Cauchy}
\left\{
\begin{aligned}
  \frac{\partial u}{\partial t} + H_B \;\! u &= 0
  \,, \qquad
  \\
  u(x,0) &= u_0(x)
  \,,
\end{aligned}
\right.
\end{equation}
where $(x,t) \in \Real^d\times(0,\infty)$ and $u_0 \in \sii(\Real^d)$.
Having the classical interpretation of the heat equation in mind,
it is thus possible to think of~$u$ as a temperature distribution
of a magnetic-sensitive medium in~$\Real^d$.
However, our main motivation to consider~\eqref{semigroup}
is its relevance in quantum mechanics,
despite the fact that the time evolution is given there
by the Schr\"odinger group, \cf~\cite{Simon_1982}.
In this context, $H_B$~is the Hamiltonian of a non-relativistic particle
interacting with the magnetic field~$B$
and a stationary electric dipole (see, \eg, \cite{Connolly-Griffiths_2007}).
We refer to the seminal paper \cite{Avron-Herbst-Simon_1978}
on a rigorous study of the magnetic field in quantum mechanics
and to~\cite{Erdos_2007} for a recent review with many references.
Finally, let us point out that~\eqref{Cauchy} has a stochastic
interpretation through the Brownian motion with imaginary drift,
\cf~\cite[Sec.~V]{Simon_Functional-new}.

Mathematically, we subtract the inverse-square potential in~\eqref{Schrodinger}
in order to reveal the \emph{transient effect} of the magnetic field.
It is well known that the large-time behaviour of a heat semigroup
is determined by spectral-threshold properties of its generator.
An important characterisation of this threshold behaviour
is given by the existence/non-existence of Hardy-type inequalities.
In the absence of magnetic field,
$H_0:= -\Delta_{x}-c_d/|x|^2$ is critical
in the sense that $c_d$ is optimal in \eqref{Hardy}
and no other non-trivial reminder term could be added
on the right hand side of~\eqref{Hardy}.
On the other hand, the following \emph{magnetic Hardy inequality}
holds whenever~$B$ is non-trivial
(in this case we write $B \not= 0$,
and similarly for other relations between functions).
\begin{Theorem}\label{Thm.Hardy}
Let $d \geq 2$.
Suppose that~$B$ is smooth and closed.
If $B\not=0$, then there exists a positive constant~$c_{d,B}$ such that
for any smooth~$A$ satisfying $\der A = B$,
the following inequality holds
\begin{equation}\label{Hardy.magnet}
  \forall \psi \in C_0^\infty(\Real^d) \,, \quad
  \int_{\Real^d}
  |(\nabla-iA)\psi(x)|^2 \, \der x
  - c_d
  \int_{\Real^d} \frac{|\psi(x)|^2}{|x|^2} \, \der x
  \geq c_{d,B}
  \int_{\Real^d} \frac{|\psi(x)|^2}{1+|x|^2 \log^2(|x|)} \, \der x
  \,.
\end{equation}
\end{Theorem}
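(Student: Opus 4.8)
The plan is to reduce the problem to two complementary regimes — near infinity and near the origin — and to exploit the fact that a non-trivial magnetic field cannot be ``seen away'' at infinity, while near the origin the inverse-square potential is harmless.

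\medskip

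\emph{Step 1: A local magnetic Hardy inequality.} First I would establish that for \emph{any} $R>0$, the inequality
\begin{equation*}
  \int_{\Real^d} |(\nabla-iA)\psi|^2 \, \der x
  \geq c(R,B) \int_{|x|<R} |\psi|^2 \, \der x
\end{equation*}
holds for all $\psi\in C_0^\infty(\Real^d)$, with $c(R,B)>0$ when $B\neq 0$. The point here is gauge-independent: by Stokes' theorem, if $B\neq 0$ on a ball $B_R$ then the integral of $B$ over some $2$-simplex inside $B_R$ is non-zero, hence $A$ cannot be gauged away on $B_R$, hence the magnetic Dirichlet form on, say, $H_0^1(B_{2R})$ has strictly positive bottom of the spectrum (zero would force a magnetic-free ground state, i.e.\ $A=\nabla\varphi$ locally, contradicting $B\neq 0$). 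A compactness argument (the inclusion $H_0^1(B_{2R})\hookrightarrow L^2(B_{2R})$ is compact) together with diamagnetic-type considerations gives the claim. This handles the region $|x|<R$ on the right-hand side of \eqref{Hardy.magnet}, since there $1+|x|^2\log^2|x|$ is bounded.

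\medskip

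\emph{Step 2: Hardy inequality with a remainder near infinity.} Here I would use the hypothesis that $B$ is compactly supported, so outside a large ball $A$ is a \emph{closed} form; modulo a gauge transformation (subtracting a locally defined potential), on $\Real^d\setminus B_{R_0}$ the $1$-form $A$ may be taken to represent a fixed cohomology class, and in $d\geq 3$ the complement is simply connected so $A$ can be chosen to decay like $|x|^{-1}$ (in $d=2$ one keeps the Aharonov–Bohm tail). In either case, decompose $\psi$ in spherical harmonics/angular modes and combine the classical one-dimensional Hardy inequality on the radial variable (which produces the $c_d/|x|^2$ term with a remainder of Hardy type $|x|^{-2}\log^{-2}|x|$, the well-known one-dimensional logarithmic Hardy remainder) with the non-negativity of the angular magnetic operator. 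For $d=2$ the zero angular mode is shifted by the flux and yields a positive term directly; for $d\geq 3$ the subcriticality ($c_d$ is not sharp on the half-line away from the origin after the radial substitution $\psi = |x|^{-(d-2)/2}\phi$) produces the logarithmic remainder. This gives
\begin{equation*}
  \int_{\Real^d}|(\nabla-iA)\psi|^2 \, \der x - c_d\int_{\Real^d}\frac{|\psi|^2}{|x|^2}\,\der x
  \geq c_\infty \int_{|x|>R}\frac{|\psi|^2}{|x|^2\log^2|x|}\,\der x - C\int_{|x|<R}|\psi|^2\,\der x,
\end{equation*}
where the negative local error is absorbed using Step 1 (with a small coupling constant).

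\medskip

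\emph{Step 3: Interpolation of the two regimes.} Finally I would combine Steps 1 and 2: take a convex combination $\theta\cdot(\text{Step 1}) + (1-\theta)\cdot(\text{Step 2})$ with $\theta\in(0,1)$ chosen so that the $+\,\theta\,c(R,B)\int_{|x|<R}$ term dominates the $-(1-\theta)C\int_{|x|<R}$ error. Since $1+|x|^2\log^2|x|$ is comparable to a constant on $|x|<R$ and to $|x|^2\log^2|x|$ on $|x|>R$ (with the harmless exception of a bounded annulus around $|x|=1$ where the weight is again bounded and absorbed into Step 1), the right-hand side is bounded below by $c_{d,B}\int_{\Real^d}|\psi|^2/(1+|x|^2\log^2|x|)$ for a suitable $c_{d,B}>0$.

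\medskip

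\emph{Main obstacle.} The delicate point is Step 2 in dimension $d=2$ with \emph{integer} flux: then the Aharonov–Bohm operator is gauge-equivalent (via a singular but locally $L^\infty$ gauge) to the non-magnetic one near infinity, so one cannot extract positivity from the flux alone — one must instead rely purely on the logarithmic Hardy remainder of the radial operator, which is genuinely one-dimensional and requires care to localize away from the origin where $c_2=0$ makes the inverse-square term vanish. Keeping track of gauge choices consistently between the compact region (Step 1, where $A$ is the original potential) and the exterior region (Step 2, where $A$ has been simplified) is the other technical bookkeeping burden; one must verify the two gauges differ by a globally defined smooth function so that the inequalities can legitimately be added.
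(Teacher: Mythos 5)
Your overall architecture --- a local magnetic Hardy inequality near the origin coming from the non-triviality of $B$, a purely radial logarithmic Hardy remainder at infinity, and a convex combination to absorb the local error --- is the paper's strategy, and your ``main obstacle'' paragraph correctly identifies that in $d=2$ with integer flux (and in all $d\geq 3$) the positivity at infinity must come from the one-dimensional logarithmic remainder rather than from the angular magnetic operator. There is, however, a genuine gap in Step~1 that propagates into Step~3. As stated, Step~1 bounds only the magnetic kinetic energy $\int|(\nabla-iA)\psi|^2$ from below by $c(R,B)\int_{|x|<R}|\psi|^2$, whereas the left-hand side of \eqref{Hardy.magnet} is $h_B[\psi]=\int|(\nabla-iA)\psi|^2-c_d\int|\psi|^2/|x|^2$. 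For $d\geq 3$ the subtracted critical inverse-square term is singular at the origin and is not controlled by $\int_{|x|<R}|\psi|^2$ (the operator $-\Delta-c_d/|x|^2$ is critical, so no such local bound holds for it); consequently your convex combination $\theta\cdot(\mathrm{Step\ 1})+(1-\theta)\cdot(\mathrm{Step\ 2})$ lower-bounds $h_B[\psi]+\theta\, c_d\int|\psi|^2/|x|^2$ rather than $h_B[\psi]$. The missing device, which is the heart of the paper's argument, is the substitution $g=|x|^{(d-2)/2}\psi$, which yields the exact identity $h_B[\psi]=\int|(\nabla-iA)g|^2\,|x|^{-(d-2)}\,\der x$ and so absorbs the inverse-square potential completely; the ``constant-modulus ground state $\Rightarrow A$ exact $\Rightarrow B=0$'' argument (which you do have) must then be run for the Neumann-type realisation of this weighted form on $D_R$ relative to the measure $|x|^{-(d-2)}\der x$. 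Note also that restrictions of $\psi\in C_0^\infty(\Real^d)$ to a ball do not lie in $H_0^1$, so the Dirichlet form on $B_{2R}$ (whose bottom is positive even for $A=0$) is the wrong object; the relevant compactness is that of the weighted form domain into $\sii\big(D_R,|x|^{-(d-2)}\der x\big)$, which the paper obtains from the two-dimensional Sobolev embedding.

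Two smaller points. First, Step~2 invokes compact support of $B$, which is not a hypothesis of the theorem (only smoothness, closedness and $B\neq 0$ are assumed); the paper's exterior estimate needs no information on $A$ at infinity: after the same Hardy transform the radial part becomes $\int|f_{,r}|^2\, r\,\der r$, and the weight $\big(r^2\log^2(r/r_0)\big)^{-1}$ is produced by an elementary one-dimensional integration-by-parts inequality with a cut-off vanishing at a point $r_0$, the cut-off error being absorbed exactly as in your Step~3. Second, the gauge bookkeeping you worry about disappears if, as in the paper, the whole argument is run with one fixed $A$ throughout.
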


This inequality was first proved by Laptev and Weidl in
\cite{Laptev-Weidl_1999} in $d=2$ under a flux condition
and with a better weight (without the logarithm)
on the right hand side of~\eqref{Hardy.magnet},
\cf~Theorem~\ref{Thm.Hardy.LW} below.
A general version of~\eqref{Hardy.magnet},
but with the integral on the right hand side being replaced by an integration
over a compact set of~$\Real^d$, was given by Weidl in \cite{Weidl_1999}.
We also refer to~\cite{Alziary-Fleckinger-Pelle-Takac_2003},
\cite{Balinsky-Laptev-Sobolev_2004}, \cite{Evans-Lewis_2005},
\cite[Sec.~6]{Kovarik_2011}
and~\cite{Ekholm-Portmann_2014} for related works.
In the last reference the authors establish
a variant of~\eqref{Hardy.magnet}
in $d=3$ under an extra assumption on~$B$.
Since the present version of the magnetic Hardy inequality
(in any dimension, with the minimal assumption $B\not=0$
and with an everywhere positive Hardy weight)
does not seem to exist in the literature,
we give a proof of Theorem~\ref{Thm.Hardy}
before proving the main result of this paper.
In the latter we essentially use the two-dimensional
variant of~\eqref{Hardy.magnet} due to Laptev and Weidl
that we therefore reprove in Theorem~\ref{Thm.Hardy.LW}.

Let us now come back to the transient effect of the magnetic field
as regards the large-time behaviour of~\eqref{semigroup}.
Assuming that~$A$ is smooth,
the \emph{diamagnetic inequality}
(see, \eg, \cite[Thm.~7.21]{LL} or \cite[Thm.~2.1.1]{Fournais-Helffer_2009})
\begin{equation}\label{diamagnetic}
  \big|(\nabla-iA)\psi(x)\big| \geq \big|\nabla|\psi|(x)\big|
\end{equation}
holds pointwise for almost every $x \in \Real^d$
and any $\psi \in H_\mathrm{loc}^1(\Real^d)$.
Consequently, $\inf\sigma(H_B) \geq 0 = \inf\sigma(H_0)$
and the spectral mapping theorem then yields
$
  \|\Euler^{-t H_B}\|
  \leq 1 = \|\Euler^{-t H_0}\|
$.
Hence, the decay of the heat semigroup in the presence
of magnetic field can be only better with respect to $B=0$.
This is notably evident for non-trivial homogeneous fields,
\ie\ $B(x)=B_0\not=0$ for all $x \in \Real^d$,
when the inequality is actually strict.
Indeed, $\lambda_1 := \inf\sigma(H_{B_0}) > 0$ in this case
(see~\cite{Balinsky-Laptev-Sobolev_2004}
for more general conditions on~$B$ to have
the positivity of the spectral threshold)
and we thus get an exponential decay
$
  \|\Euler^{-t H_B}\|
  \leq \Euler^{-t \lambda_1}
$.

In this paper we are interested in a more delicate situation
when~$B$ is \emph{local} in the sense that
it decays sufficiently fast at infinity so that
\begin{equation}\label{spectrum}
  \sigma(H_B) = \sigma(H_0) = [0,\infty) \,.
\end{equation}
Then
$
  \|\Euler^{-t H_B}\|
  = 1
$
and no extra decay of the heat semigroup is seen at this level. 
Although the spectrum as a set is insensitive
to this class of magnetic fields,
it follows from Theorem~\ref{Thm.Hardy}
that there is a fine difference reflected
in the presence of the magnetic Hardy inequality.
To exploit this subtle repulsive property of the magnetic field,
we introduce a weighted space
\begin{equation}\label{weighted}
  \sii_w(\Real^d) := \sii(\Real^d,w(x)\,\der x)
  \,, \qquad \mbox{where} \qquad
  w(x) := \Euler^{|x|^2/4}
  \,,
\end{equation}
and reconsider~\eqref{semigroup} as an operator
from $\sii_w(\Real^d) \subset \sii(\Real^d)$ to~$\sii(\Real^d)$.
That is, we restrict the initial data~$u_0$ in~\eqref{Cauchy} to lie in $\sii_w(\Real^d)$.
As a measure of the additional decay of the heat semigroup,
we then consider the polynomial \emph{decay rate}
\begin{equation}\label{rate}
  \gamma_B
  := \sup \Big\{ \gamma \ \Big| \
  \exists C_\gamma > 0, \, \forall t \geq 0, \
  \big\|\Euler^{-t H_B}\big\|_{
  \sii_w(\Real^d)
  \to
  \sii(\Real^d)
  }
  \leq C_\gamma \, (1+t)^{-\gamma}
  \Big\}
  \,.
\end{equation}
It is not difficult to see that $\gamma_0=1/2$ for any $d \geq 2$.	
The primary objective of this work is to study the influence
of a local but non-trivial magnetic field~$B$ on~$\gamma_B$.
Our main result reads as follows.
\begin{Theorem}\label{Thm.main.pre}
Let $d \geq 2$.
Suppose that~$B$ is smooth, closed and compactly supported.
Then
$$
  \gamma_B =
  \begin{cases}
  \displaystyle
  \frac{1+\beta}{2}
  & \mbox{if} \quad d=2 \,,
  \\
  \displaystyle
  \frac{1}{2}
  & \mbox{if} \quad d \geq 3 \,,
  \end{cases}
$$
where
\begin{equation}\label{flux}
  \beta := \dist(\Phi_B,\Int)
  \,, \qquad
  \Phi_B := \frac{1}{2\pi} \int_{\Real^2} {} ^{*\!}B (x) \, \der x
  \,.
\end{equation}
\end{Theorem}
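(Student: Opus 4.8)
The plan is to prove Theorem~\ref{Thm.main.pre} by the method of self-similar variables, converting the parabolic large-time problem into a spectral problem for a harmonic-oscillator-type operator on a weighted space. First I would introduce the self-similarity transformation $u(x,t) = (1+t)^{-d/4}\, v\big(x/\sqrt{1+t}, \log(1+t)\big)$ (together with the gauge-transformed vector potential rescaled accordingly), which maps the Cauchy problem~\eqref{Cauchy} into an evolution equation $\partial_s v + L_B(s)\, v = 0$ in the fixed Hilbert space $\sii_w(\Real^d)$, where $L_B(s)$ is (after the natural change of gauge and inclusion of the Gaussian weight) a magnetic Schr\"odinger operator of the form $(-i\nabla - \tilde A_s)^2 - c_d/|x|^2 + \tfrac14 |x|^2 - \tfrac{d}{2}$ minus harmless terms, and $\tilde A_s$ is the rescaled potential whose support shrinks to the origin as $s\to\infty$. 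The polynomial decay rate~$\gamma_B$ in~\eqref{rate} is then governed, in the standard way, by the bottom of the spectrum of the limiting operator $L_B(\infty)$: one shows $\gamma_B = \mu_B + \tfrac{d}{4} - \tfrac{\text{(shift)}}{}$... more precisely, $\gamma_B$ equals $\tfrac12$ plus the gap between the lowest eigenvalue of the limiting operator with magnetic field and that without; the point $\gamma_0 = 1/2$ corresponds to the magnetic-free ground state.

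The core of the argument is the identification of the limiting operator. As $s\to\infty$ the rescaled field $\tilde B_s$ concentrates at the origin and, because $B$ is compactly supported and closed, the relevant limit is an Aharonov--Bohm-type potential at the origin carrying the total flux $\Phi_B$; away from the origin the potential is a \emph{closed} but possibly non-exact form on $\Real^d \setminus \{0\}$, characterised up to gauge by its periods, i.e.\ by $\Phi_B$ in $d=2$ and, in $d\geq 3$, by the fact that $\Real^d\setminus\{0\}$ is simply connected so the limiting potential is gauge-trivial. Separating variables in polar coordinates $x = r\omega$, $r>0$, $\omega\in S^{d-1}$, the limiting operator decomposes as a direct sum/integral over the eigenspaces of a magnetic Laplacian $-\Delta_{S^{d-1}}^{\hat A}$ on the sphere, where $\hat A$ is the restriction to $S^{d-1}$ of the asymptotic angular potential. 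Writing $\nu$ for the square root of the lowest eigenvalue of $-\Delta_{S^{d-1}}^{\hat A} + c_d$ (the shift from the $-c_d/|x|^2$ term), the radial problem becomes a Laguerre-type eigenvalue problem whose ground-state energy is $\nu - \tfrac{d-2}{2} + \tfrac d2$ up to the constant shift, and tracking the constants gives $\gamma_B = \tfrac12 + \big(\nu - \tfrac{d-2}{2}\big)$. In $d\geq 3$ one has $\hat A$ gauge-equivalent to zero, hence $\nu = \tfrac{d-2}{2}$ and $\gamma_B = 1/2$; in $d=2$ the magnetic Laplacian on $S^1$ with potential of circulation $\Phi_B$ has lowest eigenvalue $\dist(\Phi_B,\Int)^2 = \beta^2$, so $\nu = \beta$ and $\gamma_B = (1+\beta)/2$.

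To make this rigorous I would proceed as follows. (i) Establish well-posedness and the basic energy estimate for the rescaled equation in $\sii_w$, using the magnetic Hardy inequality of Theorem~\ref{Thm.Hardy} (and, in $d=2$, its sharp Laptev--Weidl form, Theorem~\ref{Thm.Hardy.LW}) to control the inverse-square singularity and guarantee positivity of the quadratic form of $L_B(s)$ uniformly in $s$; this is what gives the \emph{lower} bound $\gamma_B \geq (1+\beta)/2$ (resp.\ $\geq 1/2$) after showing that the non-autonomous perturbation $L_B(s) - L_B(\infty)$ is relatively small and vanishes as $s\to\infty$. (ii) For the matching \emph{upper} bound, construct an almost-eigenfunction: take (a gauge transform of) the asymptotic ground state of $L_B(\infty)$, cut off and rescaled back to the original variables, and test the heat semigroup against it to show the decay cannot be faster than the predicted rate. (iii) Carry out the spectral analysis of $L_B(\infty)$ via the polar-coordinate decomposition and the explicit Laguerre/Whittaker spectrum of the one-dimensional radial operators, including a careful choice of self-adjoint realisation at $r=0$ dictated by the form domain inherited from $C_0^\infty(\Real^d)$.

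The main obstacle I expect is step~(i): controlling the non-autonomous, non-compactly-supported-after-gauge remainder $L_B(s) - L_B(\infty)$ uniformly enough to transfer spectral information from the limit operator to the genuine large-time asymptotics. Two technical points are delicate here. First, the natural gauge in which $\tilde A_s \to \hat A$ is \emph{not} the one in which $A$ is compactly supported; reconciling these requires a gauge transformation that is singular at the origin (an Aharonov--Bohm phase in $d=2$), and one must check that conjugating by it preserves the form domain and the Hardy inequality. Second, near $r=0$ the rescaled magnetic field does not simply disappear but concentrates, so the convergence $L_B(s)\to L_B(\infty)$ holds only in a weak/form sense and with a loss near the origin that must be absorbed precisely by the Hardy term --- this is exactly why the everywhere-positive weight in Theorem~\ref{Thm.Hardy} (rather than Weidl's compactly-supported-weight version) is essential. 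Everything downstream --- the Laguerre computation, the test-function construction, the identification $\beta = \dist(\Phi_B,\Int)$ --- is then comparatively routine.
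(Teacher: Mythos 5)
Your plan retraces the paper's own route: self-similar variables, reduction to a spectral problem for the rescaled operator $L_s = (-i\nabla - A_s)^2 - c_d/|y|^2 + |y|^2/16$, resolvent convergence to the limiting Aharonov--Bohm-type operator built from $\mathsf{A}_\infty$, a Laguerre-type separation of variables on $S^{d-1}\times(0,\infty)$, and the simply-connectedness dichotomy on $S^{d-1}$ to distinguish $d=2$ from $d\geq 3$. Two small bookkeeping corrections worth noting: your intermediate formula $\gamma_B = \tfrac12 + \bigl(\nu-\tfrac{d-2}{2}\bigr)$ with $\nu=\sqrt{\nu_B(\infty)+c_d}$ evaluates to $\tfrac12+\beta$ in $d=2$, not your (correct) final $\tfrac{1+\beta}{2}$ --- the paper's identity is $\gamma_B=\tfrac{1+\sqrt{\nu_B(\infty)}}{2}$; and the Hardy inequality that actually powers the norm-resolvent convergence is the sharp Laptev--Weidl form \emph{without} the logarithm (Theorem~\ref{Thm.Hardy.LW}), which is void for $d\geq3$, where the paper instead proves $\lambda_B(\infty)=\tfrac12$ by a direct cut-off test-function upper bound rather than by controlling a perturbation.
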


Here and in the sequel, $ ^{*\!}B$~denotes the Hodge dual of~$B$.
Note that the former is just the usual scalar field when $d=2$.

It follows from Theorem~\ref{Thm.main.pre}
that the presence of a non-trivial magnetic field
in the plane enlarges the decay rate by an extra factor determined
by the \emph{total magnetic flux}~$\Phi_B$.
The lower bound $\gamma_B \geq (1+\beta)/2$
has been already established in~\cite{K7}.
In this paper we show that there is actually an equality,
as conjectured in~\cite[Sec.~4]{K7}.

However, the main result of this paper is the claim of Theorem~\ref{Thm.main.pre}
for the higher dimensions, stating that the transient effect of
the magnetic field is in fact undetectable on the level of
the polynomial decay rate~\eqref{rate}.
The extra decay of~\eqref{semigroup}
with respect to the magnetic-free case
must be therefore weaker than polynomial.
This result is somewhat surprising,
because the shifted Schr\"odinger operator~\eqref{Schrodinger}
exhibits certain similarities with
the two-dimensional magnetic Laplacian,
but it follows that it is actually very different.

We prove Theorem~\ref{Thm.main.pre} as a corollary of another theorem,
which gives an insight into the difference between two and higher dimensions
as regards~\eqref{Schrodinger}.
This result will be stated through the behaviour of the magnetic
field at (space) infinity.
For this reason it will be convenient to introduce spherical coordinates
\begin{equation}\label{spherical}
  \chart : S^{d-1} \times (0,\infty) \to \Real^d :
  \{(\sigma,r) \mapsto \sigma r\}
  \,.
\end{equation}
Then it is also natural to work in the \emph{Poincar\'e}
(or \emph{transverse}) gauge
\begin{equation}\label{Poincare}
  x \cdot A(x) = 0
\end{equation}
valid for all $x \in \Real^d$,
where the dot denotes the scalar product in~$\Real^d$.
Note that we can assume~\eqref{Poincare} without loss of any generality,
because of the gauge invariance of the physical theory.
Indeed, given a smooth tensor field~$B$,
the closedness $\der B=0$ ensures that the vector potential
\begin{equation}\label{Biot-Savart}
  A(x) := \int_0^1 x \cdot B(x u) \, u \, \der u
\end{equation}
satisfies both~\eqref{magnet.intro} and~\eqref{Poincare}.
We denote by $\mathsf{A} := \nabla\chart \cdot (A\circ\chart)$
the covariant counterpart of~$A$ in the spherical coordinates~\eqref{spherical}.
Since the last component of~$\mathsf{A}$ is zero due to~\eqref{Poincare},
we may think of $\sigma \mapsto \mathsf{A}(\sigma,r)$ for each fixed $r>0$
as a covariant vector field ($1$-form) on the sphere~$S^{d-1}$.
We introduce the quantity
\begin{equation}\label{nu.r}
  \nu_B(r) := \inf_{\stackrel[\varphi \not= 0 ]{}{\varphi \in H^1(S^{d-1})}}
  \frac{\displaystyle
  \int_{S^{d-1}}
  \big|\big(\der'- i \mathsf{A}(\sigma,r)\big)\varphi(\sigma)\big|_{S^{d-1}}^2
  \, \der\sigma}
  {\displaystyle
  \int_{S^{d-1}}  |\varphi(\sigma)|^2 \, \der\sigma}
  \,,
\end{equation}
where~$\der'$ denotes the exterior derivative on~$S^{d-1}$
and~$|\cdot|_{S^{d-1}}$ stands for the norm of a covariant vector on~$S^{d-1}$.
Obviously, $\nu_B(r)$~is the lowest eigenvalue of a magnetic
Laplace-Beltrami operator in $\sii(S^{d-1})$.
Assuming that~$B$ is smooth and compactly supported,
it follows from~\eqref{Biot-Savart} and~\eqref{spherical}
that the limit
\begin{equation}\label{limits}
  \mathsf{A}_\infty(\sigma) := \lim_{r \to \infty} \mathsf{A}(\sigma,r)
\end{equation}
exists as a smooth vector field from the unit sphere~$S^{d-1}$ to~$\Real^d$
and we may also define the corresponding number
\begin{equation}\label{limits.nu}
  \nu_B(\infty) := \lim_{r \to \infty} \nu_B(r)
  \,.
\end{equation}
Now we are in a position to state the following result.
\begin{Theorem}\label{Thm.main}
Let $d \geq 2$.
Suppose that~$B$ is smooth, closed and compactly supported.
Then
$$
  \gamma_B = \frac{1+\sqrt{\nu_B(\infty)}}{2}
  \,.
$$
\end{Theorem}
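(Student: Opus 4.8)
The plan is to prove Theorem~\ref{Thm.main} by the method of self-similar variables combined with weighted Sobolev spaces, reducing the large-time analysis of the magnetic heat semigroup to a spectral problem for an associated Schr\"odinger-type operator whose bottom of the spectrum is governed by $\nu_B(\infty)$. Concretely, I would first apply the self-similarity transformation $x = \sqrt{1+t}\,y$, $\tau = \log(1+t)$, which maps the Cauchy problem~\eqref{Cauchy} into an autonomous problem in the moving frame. Under this change of variables the generator $H_B$ is conjugated (up to the natural rescaling of the vector potential $A$ and the unitary weight $w$ from~\eqref{weighted}) into a new operator of the form $L_{A(\tau)} = -(\nabla - iA_\tau)^2 - c_d/|y|^2 + |y|^2/16 - d/4$ acting in $\sii(\Real^d, w\,\der y)$, where $A_\tau(y) := \sqrt{1+t}\,A(\sqrt{1+t}\,y)$ encodes a vector potential that, thanks to the Poincar\'e gauge~\eqref{Poincare} and the compact support of $B$, concentrates near the origin as $\tau \to \infty$ and whose angular part converges to $\mathsf{A}_\infty$. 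Because $B$ is compactly supported, $A_\tau$ converges (in a sense strong enough for the semigroup convergence, e.g.\ in $L^2_{\mathrm{loc}}$ together with uniform bounds) to the limiting vector potential $A_\infty(y)$ obtained by extending $\mathsf{A}_\infty$ homogeneously of degree $-1$ in $y$.

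The second step is to identify the limiting operator $L_\infty = -(\nabla - iA_\infty)^2 - c_d/|y|^2 + |y|^2/16 - d/4$ in $\sii(\Real^d, w\,\der y)$ and to compute $\inf\sigma(L_\infty)$. Here I would separate variables in spherical coordinates~\eqref{spherical}: since $A_\infty$ is purely tangential and homogeneous of degree $-1$, the radial and angular parts decouple, and the operator splits (after the unitary conjugation that removes $w$) as a sum of a one-dimensional radial operator of harmonic-oscillator-plus-inverse-square type and the magnetic Laplace--Beltrami operator on $S^{d-1}$ with vector potential $\mathsf{A}_\infty$, whose lowest eigenvalue is exactly $\nu_B(\infty)$ by definition~\eqref{nu.r}--\eqref{limits.nu}. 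A standard computation of the radial ground state energy — the radial operator being essentially $-\partial_r^2 - (d-1)r^{-1}\partial_r + (m - c_d)r^{-2} + r^2/16 + \dots$ with $m = \nu_B(\infty) + (d-2)^2/4$ after absorbing the subtracted Hardy constant — yields $\inf\sigma(L_\infty) = \tfrac12\big(1 + \sqrt{\nu_B(\infty)}\big) = \gamma_B$, matching the claimed exponent (recall $\gamma_0 = 1/2$ corresponds to $\nu_B(\infty)=0$). One must check that this infimum is attained (or at least is the spectral bottom) in the weighted space, which follows because the harmonic confinement $|y|^2/16$ ensures compact resolvent once the inverse-square singularity is controlled — and that control is precisely what the magnetic Hardy inequality of Theorem~\ref{Thm.Hardy} (in its sharpened two-dimensional form, Theorem~\ref{Thm.Hardy.LW}, when $d=2$) provides, guaranteeing $L_\infty$ is bounded below and that the form domain is a genuine weighted magnetic Sobolev space.

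The third step is to transfer the spectral information about $L_\infty$ back to the decay rate $\gamma_B$. For the lower bound $\gamma_B \ge \tfrac12(1+\sqrt{\nu_B(\infty)})$, I would show that the perturbation $L_{A(\tau)} - L_\infty$ is relatively form-small and vanishes as $\tau\to\infty$, so that for any $\eps>0$ the evolution $e^{-\tau L_{A(\tau)}}$ inherits the decay $e^{-(\gamma_B-\eps)\tau}$ from $e^{-\tau L_\infty}$; undoing the self-similar change of variables turns $e^{-(\gamma_B-\eps)\tau}$ into $(1+t)^{-(\gamma_B-\eps)}$, which is the required bound up to an arbitrarily small loss (this reproves, and in $d=2$ recovers the sharp constant of, the lower bound from~\cite{K7}). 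For the upper bound $\gamma_B \le \tfrac12(1+\sqrt{\nu_B(\infty)})$, I would construct an approximate solution (a quasimode) concentrated on the ground state of $L_\infty$ — using the eigenfunction $\varphi_\infty$ of the magnetic Laplace--Beltrami operator realizing $\nu_B(\infty)$, multiplied by the radial Gaussian-type ground state and cut off away from the support of $B$ — and show it is an admissible initial datum in $\sii_w$ whose evolved norm decays no faster than $(1+t)^{-\gamma_B}$, by testing against a suitable time-dependent test function. Finally, Theorem~\ref{Thm.main.pre} is deduced as a corollary: for $d=2$ the sphere $S^1$ carries the magnetic Laplacian with a single flux parameter $\Phi_B$, and a direct computation (already implicit in \cite{Laptev-Weidl_1999}) gives $\nu_B(\infty) = \dist(\Phi_B,\Int)^2 = \beta^2$, hence $\gamma_B = (1+\beta)/2$; for $d\ge 3$, the limiting angular vector potential $\mathsf{A}_\infty$ is a closed $1$-form on the simply connected sphere $S^{d-1}$, hence exact (gauge-equivalent to zero), so $\nu_B(\infty)=0$ and $\gamma_B = 1/2$.

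\medskip

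The main obstacle I anticipate is the upper bound, i.e.\ proving that the decay rate is not better than $\tfrac12(1+\sqrt{\nu_B(\infty)})$. The delicate point is that the natural quasimode built from the limiting angular eigenfunction $\varphi_\infty$ lives on $S^{d-1}$ with the vector potential $\mathsf{A}_\infty$, whereas the true semigroup at finite time sees $\mathsf{A}(\sigma,r)$, which differs from $\mathsf{A}_\infty$ substantially inside the (fixed) support of $B$ — a region that in self-similar variables shrinks toward the origin, precisely where the inverse-square potential and the magnetic singularity are most dangerous. One must therefore show that this shrinking ``bad region'' does not spoil the quasimode estimate: this requires a careful Hardy-type localization near $y=0$, again leaning on Theorem~\ref{Thm.Hardy}/\ref{Thm.Hardy.LW}, to show the energy contribution from $|y| \lesssim (1+t)^{-1/2}$ is negligible compared to the $\nu_B(\infty)$-governed angular energy. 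A secondary technical difficulty is making rigorous the convergence of the non-autonomous family $L_{A(\tau)}$ to $L_\infty$ at the level of semigroups uniformly enough to extract sharp (not just leading-order) exponents — this is where the compact support of $B$, via the explicit Biot--Savart formula~\eqref{Biot-Savart}, is essential, since it gives quantitative decay of $\mathsf{A}(\sigma,r) - \mathsf{A}_\infty(\sigma)$ in $r$ and hence exponential-in-$\tau$ control of the perturbation.
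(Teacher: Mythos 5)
Your overall route coincides with the paper's: self-similar variables plus the weight~$w$ turn~\eqref{Cauchy} into a non-autonomous problem governed by the family~$L_s$ of~\eqref{Schrodinger.scaled}; the candidate limit is the operator with the homogeneous angular potential~$\mathsf{A}_\infty$; separation of variables gives $\inf\sigma(\mathsf{L}_\infty)=(1+\sqrt{\nu_B(\infty)})/2$ (Proposition~\ref{Prop.Laguerre.infinity}); and optimality comes from the ground state of~$\mathsf{L}_\infty$. The genuine gap is in your lower bound: the assertion that $L_{A(\tau)}-L_\infty$ is ``relatively form-small and vanishes as $\tau\to\infty$'' is precisely the step that fails if taken naively. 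In spherical coordinates the difference of the quadratic forms contains $\int|\mathsf{A}_s-\mathsf{A}_\infty|_{S^{d-1}}^2\,|\phi|^2\,\rho^{-2}\,\rho^{d-1}\,\der\sigma\,\der\rho$ plus a cross term; this is supported in the shrinking ball $\rho\leq\Euler^{-s/2}R$ but carries the Hardy-critical weight~$\rho^{-2}$, so its form bound relative to~$\mathsf{l}$ or~$\mathsf{l}_\infty$ does \emph{not} tend to zero: a trial function concentrated at scale~$\Euler^{-s/2}$ sees an $O(1)$ relative contribution, and any bound of the type $\eps_s\,\mathsf{l}[\phi]+C_s\|\phi\|^2$ with $\eps_s\to0$ forces $C_s\gtrsim\Euler^{s}$. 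Hence one cannot deduce $\liminf_{s}\lambda_B(s)\geq\inf\sigma(\mathsf{L}_\infty)$ --- which is what $\gamma_B\geq(1+\sqrt{\nu_B(\infty)})/2$ requires via Proposition~\ref{Prop.reduction} --- from form-perturbation theory. The paper's substitute is Theorem~\ref{Prop.strong}: norm-resolvent convergence $\mathsf{L}_s\to\mathsf{L}_\infty$, proved through the compactness criterion of Lemma~\ref{Lem.nrs}, whose key ingredient is applying the logarithm-free Hardy inequality of Theorem~\ref{Thm.Hardy.LW} to the \emph{scaled} potential~$A_s$, which yields the uniform bound $\int|\phi_s|^2(\Euler^{-s}+\rho^2)^{-1}\leq 2/\tilde{c}_{d,B}$ for the resolvent solutions~$\phi_s$ and thereby controls the critical term and the angular derivative. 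You invoke the Hardy inequality only for semiboundedness and compactness of the limit operator and for the quasimode; this uniform-in-$s$ use is the missing idea.

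Two further remarks. First, you locate the main difficulty in the upper bound (optimality of the rate), but there the compact support of~$B$ makes matters comparatively easy: by~\eqref{compact} one has $\mathsf{A}_s=\mathsf{A}_\infty$ outside $D_{\Euler^{-s/2}R}$, and the paper takes $u_0=w^{-1/2}\psi_1$ with~$\psi_1$ the ground state of the limit operator and bounds $\|u(t)\|_{\sii(\Real^d)}$ from below by its norm over $\Real^d\setminus D_R$, with no quasimode or duality pairing; the genuinely hard direction is the one you dispatch with ``relatively form-small''. Second, the case $\nu_B(\infty)=0$ (all of $d\geq3$, and integer flux in $d=2$) must be treated separately, since the norm-resolvent convergence is established only for $\nu_B(\infty)\neq0$ (the form domain of~$\mathsf{l}_\infty$ changes, \cf~\eqref{implication}); the paper handles it by the direct trial-function computation of Proposition~\ref{Prop.log}, whereas your plan does not distinguish the two cases.
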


Theorem~\ref{Thm.main.pre} follows as a consequence of this unified identity.
Indeed, solving the spectral problem associated with~\eqref{nu.r} explicitly
(see, \eg, \cite{K7}), we find
\begin{equation}\label{d.2}
  \nu_B(\infty) = \dist(\Phi_B,\Int)^2
  \qquad\mbox{if}\qquad d=2
  \,.
\end{equation}
On the other hand, in higher dimensions we have the following equivalences.
\begin{Proposition}\label{Prop.eq.r}
Let $d \geq 3$.
Suppose that~$B$ is smooth, closed and compactly supported.
The following statements are equivalent, where $r \in (0,\infty)$ is fix.
\begin{enumerate}
\item[\emph{(i)}]
$\nu_B(r) = 0$.
\item[\emph{(ii)}]
The system
$\der'\varphi-i\mathsf{A}(\cdot,r)\varphi = 0$ on $S^{d-1}$
admits a smooth solution $\varphi \not=0$.
\item[\emph{(iii)}]
$\mathsf{A}(\cdot,r)$ is exact on~$S^{d-1}$,
\ie\ $\mathsf{A}(\cdot,r)=\der' f$ for some smooth function~$f$ on~$S^{d-1}$.
\item[\emph{(iv)}]
$\mathsf{A}(\cdot,r)$ is closed on~$S^{d-1}$,
\ie\ $\mathsf{B}'(\cdot,r) := \der' \mathsf{A}(\cdot,r) = 0$
as a $2$-covariant tensor on~$S^{d-1}$.
\item[\emph{(v)}]
The $S^{d-1}$ Hodge dual satisfies \,$ ^{*\!}\mathsf{B}'(\cdot,r) = 0$.
\item[\emph{(vi)}]
The $\Real^{d}$ Hodge dual satisfies
\,$ ^{*\!}\mathsf{B}^{\lambda_{1} \dots \lambda_{d-3} d}(\cdot,r) = 0$
for every $\lambda_{1}, \dots, \lambda_{d-3} \in \{1,\dots,d-1\}$.
\end{enumerate}

\end{Proposition}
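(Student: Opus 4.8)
The plan is to establish the equivalences by a cycle of implications, grouping together those that are essentially linear algebra / de Rham cohomology on the sphere and treating separately the one analytic step. I would first dispose of the purely algebraic equivalences (iv) $\Leftrightarrow$ (v) $\Leftrightarrow$ (vi): these are just restatements of the vanishing of the $2$-form $\mathsf{B}'(\cdot,r)=\der'\mathsf{A}(\cdot,r)$ in different guises. The equivalence (iv) $\Leftrightarrow$ (v) is immediate since the Hodge star on the compact Riemannian manifold $S^{d-1}$ is a linear isomorphism between spaces of forms. For (v) $\Leftrightarrow$ (vi) I would unwind the relation between the intrinsic exterior derivative $\der'$ on the sphere and the ambient exterior derivative on $\Real^d$ in the spherical chart $\mathcal{L}$: because $\mathsf{A}$ has vanishing radial component (by the Poincar\'e gauge~\eqref{Poincare}), the sphere-tangential components of $\der A$ restricted to a fixed radius coincide with $\der'\mathsf{A}(\cdot,r)$, and then Hodge duality in $\Real^d$ of the $2$-form $\mathsf{B}$ with one index forced to be the radial one~$d$ is, up to the metric factors, exactly Hodge duality of $\mathsf{B}'$ on $S^{d-1}$. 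This is a bookkeeping computation with the metric $\der r^2 + r^2 \der\sigma^2$ and I would present it compactly rather than in full.

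Next I would prove the chain (iii) $\Rightarrow$ (ii) $\Rightarrow$ (i) $\Rightarrow$ (iv) $\Rightarrow$ (iii), which closes the loop. For (iii) $\Rightarrow$ (ii): if $\mathsf{A}(\cdot,r)=\der'f$, then $\varphi:=\Euler^{if}$ is smooth, nowhere zero, and satisfies $\der'\varphi = i(\der'f)\varphi = i\mathsf{A}(\cdot,r)\varphi$. For (ii) $\Rightarrow$ (i): a nonzero smooth solution $\varphi$ of the covariant-constancy system makes the numerator of the Rayleigh quotient in~\eqref{nu.r} vanish, hence $\nu_B(r)=0$ (and since $\nu_B(r)\geq 0$ always, this is an attained minimum). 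For (i) $\Rightarrow$ (iv): here is the one genuinely analytic step. If $\nu_B(r)=0$, the infimum in~\eqref{nu.r} is attained (the magnetic Laplace--Beltrami operator on the compact manifold $S^{d-1}$ has compact resolvent, so $\nu_B(r)$ is a genuine eigenvalue) by some $\varphi\in H^1(S^{d-1})$, $\varphi\neq 0$, with $(\der'-i\mathsf{A}(\cdot,r))\varphi=0$ almost everywhere; elliptic regularity upgrades $\varphi$ to a smooth solution. Writing locally $\varphi = \rho\,\Euler^{i\theta}$ away from the zero set of $\varphi$, the equation forces $\der'\rho=0$ and $\der'\theta = \mathsf{A}(\cdot,r)$ locally, so $|\varphi|$ is locally constant, hence (by connectedness of $S^{d-1}$, using $d\geq 3$) globally a nonzero constant, so $\varphi$ is nowhere zero; then $\mathsf{A}(\cdot,r)=\der'\theta$ is closed locally and therefore closed, i.e.\ $\mathsf{B}'(\cdot,r)=0$. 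Finally (iv) $\Rightarrow$ (iii): a closed $1$-form on $S^{d-1}$ with $d-1\geq 2$ is exact because $H^1_{\mathrm{dR}}(S^{d-1})=0$; this is where the hypothesis $d\geq 3$ is essential, and it is exactly the point at which the statement differs from the two-dimensional case, where $H^1_{\mathrm{dR}}(S^1)\neq 0$ and closed need not imply exact.

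The main obstacle I expect is the regularity/nonvanishing argument in (i) $\Rightarrow$ (iv): one must rule out that the minimizer $\varphi$ has zeros, since the polar decomposition $\varphi=\rho\Euler^{i\theta}$ and the ensuing ``$|\varphi|$ is constant'' reasoning only work away from $\{\varphi=0\}$. The clean way is to observe that $(\der'-i\mathsf{A}(\cdot,r))\varphi=0$ is a first-order linear system with smooth coefficients, so along any curve $\varphi$ solves a linear ODE with smooth coefficients; by uniqueness for linear ODEs, if $\varphi$ vanishes at one point it vanishes identically, contradicting $\varphi\neq 0$. With nonvanishing in hand the rest is local and routine. A secondary, purely notational nuisance is keeping the conventions for the ambient versus intrinsic Hodge stars consistent in (vi); the index $d$ singled out in $ ^{*\!}\mathsf{B}^{\lambda_1\dots\lambda_{d-3}d}$ is the radial one in the frame adapted to~\eqref{spherical}, and I would state this identification explicitly at the start so that (v) $\Leftrightarrow$ (vi) becomes transparent.
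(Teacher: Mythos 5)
Your proposal is correct and follows the same basic architecture as the paper's proof (close a cycle of implications, keeping the algebraic Hodge-duality equivalences (iv)\,$\Leftrightarrow$\,(v)\,$\Leftrightarrow$\,(vi) together and isolating the one analytic step). The essential difference is in how you establish that the minimizing eigenfunction $\varphi$ is nowhere zero. You argue via uniqueness for a first-order linear ODE along curves on $S^{d-1}$: if $\varphi$ vanished at a point, transporting along curves would force $\varphi\equiv 0$. This works. The paper instead multiplies the equation $\der'\varphi=i\mathsf{A}(\cdot,r)\varphi$ by $\bar\varphi$, adds the complex conjugate equation multiplied by $\varphi$, and reads off $\der'|\varphi|^2=\bar\varphi\,\der'\varphi+\varphi\,\der'\bar\varphi=0$; since $S^{d-1}$ is connected, $|\varphi|$ is a nonzero constant. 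The latter is a one-line algebraic identity that never has to confront the zero set at all, so it sidesteps precisely the "main obstacle" you flagged. Both arguments are sound; the paper's is shorter. A second structural difference: you prove (i)\,$\Rightarrow$\,(iv) directly via a local polar decomposition, which gives closedness pointwise without ever needing a globally defined phase; the paper proves (ii)\,$\Rightarrow$\,(iii) (exactness) directly, which implicitly relies on lifting the phase of the nowhere-vanishing $\varphi$ to a global real-valued $f$ — itself a simple-connectivity fact. Your routing thus confines the use of $d\geq 3$ cleanly to the single step (iv)\,$\Rightarrow$\,(iii) via $H^1_{\mathrm{dR}}(S^{d-1})=0$, which is a small conceptual gain.

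One minor slip to correct: the parenthetical "(by connectedness of $S^{d-1}$, using $d\geq 3$)" in your (i)\,$\Rightarrow$\,(iv) step misattributes the hypothesis. $S^{d-1}$ is connected for every $d\geq 2$; connectedness alone gives that a locally constant function is globally constant. The condition $d\geq 3$ plays no role in that step — it is needed only, as you correctly state afterwards, to guarantee $H^1_{\mathrm{dR}}(S^{d-1})=0$ in (iv)\,$\Rightarrow$\,(iii), which is exactly what breaks for $d=2$.
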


If $d=3$, ${} ^{*\!}B$~is just the usual contravariant vector field
and~(vi) can be written in a coordinate-free version
\begin{equation*}
  {} ^{*\!}B(x) \cdot x = 0
\end{equation*}
for $|x|=r$ and all $\sigma \in S^{d-1}$.
In any case, assuming that~$B$ is compactly supported,
it follows from (iv)--(vi) that
\begin{equation}\label{d.high}
  \nu_B(\infty) = 0
  \qquad\mbox{if}\qquad d \geq 3
  \,.
\end{equation}
Using~\eqref{d.2} and~\eqref{d.high},
we therefore deduce Theorem~\ref{Thm.main.pre} from Theorem~\ref{Thm.main}.

For the reader not familiar with the concept of differential forms on manifolds,
we recall basic notions in Section~\ref{Sec.Pre}
together with giving a proof of Proposition~\ref{Prop.eq.r}.
Here we only remark that the equivalence between~(iii) and~(iv) fails when $d=2$,
because~$S^1$ is not simply connected, \cf~Remark~\ref{Rem.d2}.
This makes the two-dimensional situation intrinsically different.

To prove Theorem~\ref{Thm.main},
we adapt the method of self-similar variables,
which was developed for the heat equation by Escobedo and Kavian
in~\cite{Escobedo-Kavian_1987} and~\cite{MR959221}.
The technique was subsequently applied to convection-diffusion equations
by Escobedo, V\'{a}zquez and Zuazua in~\cite{MR1266100} and~\cite{MR1233647};
to the heat equation with the inverse-square potential
by V\'{a}zquez and Zuazua in~\cite{Vazquez-Zuazua_2000};
to the heat equation in twisted domains
by Krej\v{c}i\v{r}\'ik and Zuazua in~\cite{KZ1} and~\cite{KZ2};
to the present problem when $d=2$ by Krej\v{c}i\v{r}\'ik in~\cite{K7};
and, most recently,
to the heat equation in curved manifolds
by Kolb and Krej\v{c}i\v{r}\'ik in~\cite{KKolb}.
The present work can be considered as an extension of~\cite{K7} to any dimension,
but the presence of the inverse-square potential in~\eqref{Schrodinger}
also invokes~\cite{Vazquez-Zuazua_2000}.
We remark that the presence of magnetic Hardy inequalities
is essentially used in our study of the large-time behaviour
of the heat semigroup~\eqref{semigroup}
and the method thus represents an interesting application
of this functional-analytic tool.

The paper is organised as follows.
In the preliminary Section~\ref{Sec.Pre}
we collect a necessary material about the magnetic field
in any dimension and in spherical coordinates,
and establish Proposition~\ref{Prop.eq.r}.
We also give a precise definition of the magnetic
Schr\"o\-dinger operator~\eqref{Schrodinger}
and comment on a proof of~\eqref{spectrum}.
Theorem~\ref{Thm.Hardy} and other types of magnetic Hardy inequalities
are established in Section~\ref{Sec.Hardy}.
In Section~\ref{Sec.heat} we develop the method
of self-similar variables for~\eqref{Cauchy}
and reduce the large-time behaviour of the semigroup~\eqref{semigroup}
to a spectral analysis of a Schr\"odinger operator
with a singularly scaled magnetic field.
The latter is studied in Section~\ref{Sec.singular},
where we eventually give a proof of Theorem~\ref{Thm.main}.
The main ingredient in the spectral approach is Theorem~\ref{Prop.strong}
that establishes a norm-resolvent convergence of
the singularly scaled Schr\"odinger operators
to an Aharonov-Bohm-type operator.
The norm-resolvent convergence is obtained with help of
an abstract criterion (Lemma~\ref{Lem.nrs})
that we formulate and prove in Appendix~\ref{Sec.App}.
In Theorem~\ref{Prop.strong}, which we believe is of independent interest,
we employ among other things the magnetic Hardy inequality
of Theorem~\ref{Thm.Hardy.LW}.
The paper is concluded in Section~\ref{Sec.end}
by referring to some open problems.

%---------------------------%
\section{The magnetic field}\label{Sec.Pre}
%---------------------------%
%
In this preliminary section we collect some basic facts
about the concept of magnetic field in any dimension
and in spherical coordinates.
We refer, \eg, to~\cite{LR} and~\cite{Spivak1}
for notions related to tensors and differential forms.

\subsection{The magnetic potential, tensor and induction}\label{geom_prelim}
The magnetic field in the Euclidean space~$\Real^d$ with any $d \geq 2$
is most straightforwardly introduced through a 1-form $A = A_j \, \der x^j$,
where $A_j:\Real^d \to \Real$ are smooth functions and
$\der x^1,\dots,\der x^d$ is the dual basis
to the coordinate basis $\partial/\partial{x^1}, \dots, \partial/\partial{x^d}$
corresponding to the Cartesian coordinates $x = (x^1,\dots,x^d) \in \Real^d$.
Here and in the sequel we assume the Einstein summation convention,
with the range of Latin indices being~$1,\dots,d$.
Hence, $A$~is just a covariant vector field in~$\Real^d$.
In the Cartesian coordinates, $A$~coincides with
the contravariant vector field $A^j \partial/\partial{x^j}$.

Given a smooth 1-form~$A$, we introduce a $2$-form~$B$
as the exterior derivative of~\eqref{magnet.intro}.
The form $B$~can be identified with
a covariant skew-symmetric tensor field of order~$2$
with coefficients $B_{jk}=A_{k,j} - A_{j,k}$,
where we have introduced the comma notation for partial derivatives
(\ie\  $A_{k,j}:=\partial A_k/\partial x^j$).
$B$~is smooth in the sense that its coefficients are smooth.

The identity~\eqref{magnet.intro} means that~$B$ is an \emph{exact} form.
Hence, $B$~is necessarily \emph{closed}, \ie~\eqref{Gauss} holds.
Conversely, given a smooth $2$-form~$B$ satisfying~\eqref{Gauss},
we know that it is exact by the Poincar\'e lemma.
(Indeed, $\Real^d$ is clearly contractible.)
That is, there exists a smooth $1$-form~$A$
such that~\eqref{magnet.intro} holds.

Summing up, the correspondence~\eqref{magnet.intro} between~$A$ and~$B$
is consistent (\ie~one quantity can be obtained from the other in both directions)
provided that the latter satisfies~\eqref{Gauss}.
However, $B$~is ``more physical'' since it is uniquely determined
and appears in the Maxwell equation~\eqref{Gauss}.
In this physical context, $A$ and~$B$
are referred to as the \emph{magnetic potential}
and the \emph{magnetic tensor}, respectively.

Finally, we introduce the \emph{magnetic induction} $ ^{*\!}B$
as the Hodge-star dual of~$B$, \ie,
\begin{equation}\label{Hodge}
   ^{*\!}B = *\;\!\der A = {} ^{*\!}B^{l_1 \dots l_{d-2}} \
  \frac{\partial}{\partial{x^{l_1}}} \otimes \dots \otimes
  \frac{\partial}{\partial{x^{l_{d-2}}}}
  \,, \qquad \mbox{where} \qquad
  {} ^{*\!}B^{l_1 \dots l_{d-2}}
  = \frac{1}{2!} \, \eps^{l_1 \dots l_{d-2} j k} \, B_{jk}
  \,.
\end{equation}
Here $\eps$ is the Levi-Civita tensor,
which coincides with the usual Levi-Civita permutation symbol
in the Cartesian coordinates.
Note that~$ ^{*\!}B$ is a contravariant tensor field of order $d-2$.
Hence, $ ^{*\!}B$ is just a contravariant vector field in $d=3$
(it is a scalar field in $d=2$),
where it corresponds to the familiar quantity related to~$A$
via $ ^{*\!}B = \rot A$.

\subsection{The gauge invariance and the Poincar\'e gauge}
The fact that $A$~is not uniquely determined by~$B$,
is the well-known \emph{gauge invariance} of magnetic field.
Mathematically, one can employ this freedom
to work in a suitable choice (\emph{gauge}) of~$A$.

In components, condition~\eqref{Gauss} means that
the following Jacobi identity
\begin{equation}\label{Gauss.component}
  B_{kl,j} + B_{lj,k} + B_{jk,l} = 0
\end{equation}
holds for all indices $j,k,l \in \{1,\dots,d\}$.
From the skew-symmetry of~$B_{jk}$
and symmetries of Christoffel's symbols,
the partial derivatives in~\eqref{Gauss.component}
can be replaced by covariant derivatives (denoted by a semicolon here).
Then it is easy to see that~\eqref{Gauss.component} is equivalent
to the divergence-type identity
\begin{equation}\label{Maxwell}
  {} ^{*\!}B^{l_1 \dots l_{d-3} l_{d-2}
  }_{\phantom{l_1 \dots l_{d-3} l_{d-2}}; l_{d-2}} = 0
\end{equation}
for all indices $l_1 \dots l_{d-3} \in \{1,\dots,d\}$.
In $d=3$, this requirement reduces to the familiar formula
$\divergence {} ^{*\!}B = 0$.
Note also that~\eqref{Maxwell} is automatically satisfied in $d=2$,
where $ ^{*\!}B$ is a scalar field.

Assuming that~$B$ is smooth and closed and using~\eqref{Gauss.component},
it is straightforward to check that
the magnetic potential~$A$ defined by~\eqref{Biot-Savart}
satisfies~\eqref{Poincare} and~\eqref{magnet.intro}.
%We leave the details to the reader.
Note that, in components, \eqref{Biot-Savart} reads
\begin{equation}\label{Biot-Savart.component}
  A_j(x) = \int_0^1 x^l B_{lj}(x u) \, u \, \der u
  \,.
\end{equation}

A characteristic assumption of this paper
is that~$B$ is compactly supported.
It follows that the magnetic potential~$A$
in the Poincar\'e gauge~\eqref{Biot-Savart}
vanishes at infinity, too. Indeed,
\begin{equation}\label{A.vanish}
  |A(x)| \leq \frac{R^2 \, \|B\|_\infty}{|x|}
\end{equation}
for all $x \in \Real^d$ outside a big ball $D_R \supset \supp |B|$,
where~$|B|$ denotes the operator norm of~$B$
and $\|B\|_\infty := \sup_{x\in\Real^d} |B(x)|$.

\subsection{Spherical coordinates}
In spherical coordinates~\eqref{spherical},
the magnetic potential and magnetic tensor
are respectively given by
$$
  \mathsf{A} := \nabla\chart \cdot (A\circ\chart)
  \qquad\mbox{and}\qquad
  \mathsf{B} := \nabla\chart \cdot (B\circ\chart)
  \cdot (\nabla\chart)^T
  \,,
$$
where the transfer (or Jacobian) matrix reads
\begin{equation}\label{transfer}
  \nabla\chart =
  \begin{pmatrix}
    r \, \nabla'\sigma \\
    \sigma
  \end{pmatrix}
  \,.
\end{equation}
Here we use a concise notation where~$\nabla'$ is the gradient
with respect to local coordinates $\theta^1,\dots,\theta^{d-1}$
on the sphere~$S^{d-1}$.

As usual for curvilinear coordinates,
it is important to distinguish between covariant
and contravariant components of the tensors~$\mathsf{A}$ and $\mathsf{B}$.
The corresponding identification is given by the metric tensor
\begin{equation}\label{metric.spherical}
  g := \nabla\chart \cdot (\nabla\chart)^T
  = r^2 \, \der\sigma^2 + \der r^2
  =
  \begin{pmatrix}
    r^2 \, \gamma & 0 \\
    0 & 1
  \end{pmatrix}
  \,,
  \qquad
  |g| := \det(g) = r^{2(d-1)} \;\! |\gamma|
  \,,
\end{equation}
where
$
  \der\sigma^2=\gamma_{\mu\nu}(\theta)
  \, \der\theta^{\mu} \otimes \der\theta^{\nu}
$
is the metric of~$S^{d-1}$.
The range of Greek indices is assumed to be $1,\dots,d-1$.
As usual, we denote by~$g^{jk}$ the coefficients
of the inverse matrix~$g^{-1}$.
We shall not need explicit formulae for~$\gamma$ and~$\nabla'\sigma$,
but it is essential to realise that these quantities
are independent of the radial coordinate~$r$.

Formulae analogous to~\eqref{magnet.intro}, \eqref{Gauss} and~\eqref{Hodge}
hold for the spherical variables $q=(q',q^d)$
with $q' \in S^{d-1}$ and $q^d \in (0,\infty)$ as well;
it is just enough to replace $x, A, B$ with $q, \mathsf{A}, \mathsf{B}$.
In the last formula of~\eqref{Hodge},
it is important that we have introduced~$\eps$ as a tensor;
in spherical coordinates we thus have
$\eps^{l_1 \dots l_{d-2} j k} = |g|^{-1/2} \delta^{l_1 \dots l_{d-2} j k}$,
where~$\delta$ is the standard Levi-Civita permutation symbol (tensor density).
We obviously have $q^d=r=|x|$
and $q'=\sigma=x/|x|=\partial/\partial_r$, with $x \in \Real^d$.
Formulae~\eqref{Gauss.component} and~\eqref{Maxwell}
remain true in the spherical coordinates, too, after the replacement above.

The gauge formula~\eqref{Biot-Savart.component}
in the spherical coordinates reads
\begin{equation}\label{Biot-Savart.spherical}
  \mathsf{A}_\mu(\sigma,r)
  = \int_0^1 r \, \mathsf{B}_{d\mu}(\sigma,ru) \, u \, \der u
  = \int_0^r \frac{\mathsf{B}_{d\mu}(\sigma,v)}{r} \, v \, \der v
\end{equation}
for $\mu \in \{1,\dots,d-1\}$, while $\mathsf{A}_d=0$.
Passing back to the Cartesian coordinates
on the right hand side of~\eqref{Biot-Savart.spherical}
with help of~\eqref{transfer},
we get
\begin{equation}\label{passing}
  \mathsf{A}_\mu(\sigma,r)
  = \int_0^r \sigma^j B_{jk}(\sigma v) \, \sigma^k_{,\mu} \, v \, \der v
  = \int_0^r
  \left[
  \sigma \cdot B(\sigma v) \cdot (\nabla'\sigma)^T
  \right]_\mu
  v \, \der v
  \,.
\end{equation}
Hence, $\mathsf{A}_\mu(\sigma,r)$ depends on~$r$
only through the limit value in the integral
on the right hand side of this formula.
Assuming that~$B$ is compactly supported
(in the sense of its coefficients),
we thus see that there exists $R>0$ such that
$\mathsf{A}(\sigma,r)=\mathsf{A}(\sigma,R)$
for all $r \geq R$.
In particular, the limit~\eqref{limits} is well defined and
\begin{equation*}%\label{passing.bis}
  (\mathsf{A}_{\infty})_\mu(\sigma)
  = \int_0^\infty
  \left[
  \sigma \cdot B(\sigma v) \cdot (\nabla'\sigma)^T
  \right]_\mu
  v \, \der v
\end{equation*}
is obviously a smooth vector field
(in the sense of its coefficients).
On the other hand, the contravariant version of~$\mathsf{A}_\infty$
is a singular field; in fact,
$$
  |\mathsf{A}_\infty(\sigma)| = \frac{|\mathsf{A}_\infty(\sigma)|_{S^{d-1}}}{r}
  \,,
$$
which follows from the definitions
$
  |\mathsf{A}|^2 := \mathsf{A}_j g^{jk} \mathsf{A}_k
$
and
$
  |\mathsf{A}|_{S^{d-1}}^2 := \mathsf{A}_\mu \gamma^{\mu\nu} \mathsf{A}_\nu
$
and~\eqref{metric.spherical}.

\subsection{Proof of Proposition~\ref{Prop.eq.r}}\label{Sec.Prop.proof}
After the geometric preliminaries,
we are eventually in a position to establish 
the equivalent statements of Proposition~\ref{Prop.eq.r}.
\begin{proof}[Proof of Proposition~\ref{Prop.eq.r}]
\underline{(i) $\Leftrightarrow$ (ii)}.
(ii)~clearly implies~(i).
Since the embedding $H^1(S^{d-1}) \hookrightarrow \sii(S^{d-1})$
is compact, the infimum~\eqref{nu.r} is achieved
by a non-trivial function $\varphi \in H^1(S^{d-1})$.
Moreover, $\nu_B(r)$~is the first eigenvalue of
the self-adjoint operator $(-i\nabla_{\!\sigma} - \mathsf{A}(\sigma,r))^2$
in $\sii(S^{d-1})$, whose eigenfunctions are smooth
by elliptic regularity theory.
Hence, if $\nu_B(r)=0$, the numerator of~\eqref{nu.r}
must vanish with a non-trivial
smooth function~$\varphi$, which implies~(ii).
\\
\underline{(ii) $\Leftrightarrow$ (iii)}.
If $\mathsf{A}(\cdot,r)=\der' f$, then $\varphi = \Euler^{if}$ solves
the required system of differential equations.
Conversely, let~(ii) hold.
Multiplying the equation~$\varphi$ satisfies with~$\bar\varphi$
and combining the resulting equation with its complex-conjugate analogue,
we deduce $\der'|\varphi|^2=0$.
Hence, the magnitude $\rho := |\varphi|$ is constant on~$S^{d-1}$.
In particular, $\rho$ is positive because~$\varphi$ is non-trivial.
Inserting $\varphi = \rho \Euler^{if}$ with a real-valued function~$f$
into the equation~$\varphi$ satisfies,
we then obtain that $\der' f = \mathsf{A}(\cdot,r)$, which gives~(iii).
\\
\underline{(iii) $\Leftrightarrow$ (iv)}.
Any exact form is necessarily closed.
The opposite implication is non-trivial
(and in fact false for higher-order forms in general).
But all closed 1-forms on a simply connected manifold are exact
(see, \eg, \cite[Thm.~15.17]{MR1930091}).
Note that this argument differs from the Poincar\'e lemma
which requires that the manifold is contractible
(which does not hold for spheres).
\\
\underline{(iv) $\Leftrightarrow$ (v)}.
This equivalence follows from the duality relation~\eqref{Hodge},
which reads in the present situation
$$
  ^{*\!}\mathsf{B}'^{\lambda_1 \dots \lambda_{d-3}}
  = \frac{1}{2!} \, \eps^{\lambda_1 \dots \lambda_{d-3} \mu \nu}
  \, \mathsf{B}_{\mu\nu}
  \,.
$$
\\
\underline{(v) $\Leftrightarrow$ (vi)}.
Finally, using properties of the Levi-Civita tensor,
we observe the identity
$$
  ^{*\!}\mathsf{B}^{\lambda_{1} \dots \lambda_{d-3} d}
  = \frac{1}{2!} \, \eps^{\lambda_1 \dots \lambda_{d-3} d \mu \nu}
  \, \mathsf{B}_{\mu\nu}
  = \frac{1}{2!} \, \eps^{\lambda_1 \dots \lambda_{d-3} \mu \nu d}
  \, \mathsf{B}_{\mu\nu}
  = \frac{1}{2!} \, \eps^{\lambda_1 \dots \lambda_{d-3} \mu \nu}
  \, \mathsf{B}_{\mu\nu}
  = {} ^{*\!}\mathsf{B}'^{\lambda_1 \dots \lambda_{d-3}}
  \,,
$$
which proves the desired equivalence.
\end{proof}

Property~(iv) is particularly convenient,
since it reduces to a verification of the integrability conditions
$\mathsf{A}_{\nu,\mu} = \mathsf{A}_{\mu,\nu}$
for every $\mu,\nu \in \{1,\dots,d-1\}$.
On the other hand, property~(vi) is probably most physically intuitive,
since it says that a radial projection of the magnetic induction should vanish.
Note also that $ ^{*\!}\mathsf{B}'$ is just a scalar field on~$S^{d-1}$ if $d=3$.

\begin{Remark}[Proposition~\ref{Prop.eq.r} in $d=2$]\label{Rem.d2}
The two-dimensional situation is excluded from the proposition,
because there we do not have the equivalence between~(iii) and~(iv)
(only (iii)~$\Rightarrow$~(iv) holds in general).
Indeed, $S^1$~is not simply connected.
However, we still have equivalences among~(i), (ii) and~(iii).
It follows from the analysis in~\cite{K7}
that $\nu_B(r) = \dist(\Phi_B(r),\Int)^2$,
where~$\Phi_B(r)$ is the magnetic flux
$
  \Phi_B(r) := \frac{1}{2\pi} \int_{D_r} {} ^{*\!}B(x) \, \der x
$
in the ball $D_r$ of radius~$r$ centred at~$0$.
Hence, the exactness of $\mathsf{A}(\cdot,r)$ on~$S^1$
is rather determined by global properties of~$B$.
\end{Remark}

\subsection{The magnetic Schr\"odinger operator}\label{Sec.operator}
Recall the basic relation $B = \der A$,
where the magnetic potential is assumed to be smooth.
We introduce~\eqref{Schrodinger} as the Friedrichs extension
of the operator initially defined on~$C_0^\infty(\Real^d)$.
More specifically, $H_B$~is the self-adjoint
operator in $\sii(\Real^d)$ associated with the quadratic form
\begin{equation}\label{form}
  h_B[\psi] := \int_{\Real^d} \big|(\nabla-iA)\psi(x)\big|^2 \, \der x
  - c_d \int_{\Real^d} \frac{|\psi(x)|^2}{|x|^2} \, \der x
  \,, \qquad
  \Dom(h_B) := \overline{C_0^\infty(\Real^d)}^{\|\cdot\|_{h_B}}
  \,.
\end{equation}
Here the norm with respect to which the closure
is taken is defined by
\begin{equation}\label{norm}
  \|\psi\|_{h_B} := \sqrt{h_B[\psi] + \|\psi\|_{\sii(\Real^d)}^2}
  \,.
\end{equation}
Note that~$h_B$ is non-negative due to
the diamagnetic inequality~\eqref{diamagnetic}
and the classical Hardy inequality~\eqref{Hardy}.

At a first sight, we simply remark that
$\Dom(h_B) \supset H^1(\Real^d)$ whenever~$A$ is bounded.
On the other hand, it is known that $\Dom(h_0)$ is strictly larger than $H^1(\Real^d)$.
For that it is enough to consider functions
which behave at the origin $x=0$ like:
$$
  \psi_\alpha(x)\sim |x|^{-(d-2)/2}\left(\log \frac{1}{|x|}\right)^{\alpha}
  \,, \qquad
  -1/2\leq \alpha< 1/2
  \,.
$$
Then it is not difficult to check that
$\psi_\alpha \in \Dom(h_0)\setminus H^1(\Real^d)$
(see, \eg, \cite[Sec.~2.2]{Vazquez-Zagraphopoulos-2012}).
(As a matter of fact,
the authors in~\cite{Vazquez-Zagraphopoulos-2012}
only pointed out the cases $0< \alpha< 1/2$,
because such $\psi_\alpha$ are the most singular,
but it is easy to extend the argument for $\alpha \leq 0$.)
%
%\begin{Remark}
Using arguments as in the proof of Lemma~\ref{Lem.norm.eq} below,
it follows that $\Dom(h_0)=\Dom(h_B)$ provided that~$A$ is bounded.
%\end{Remark}
%

%
\begin{Lemma}\label{Lem.core}
Let $d \geq 2$.
Suppose that~$B$ is smooth and closed.
Then $C_0^\infty(\Real^d \setminus \{0\})$ is a core of~$h_B$.
\end{Lemma}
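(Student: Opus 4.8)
The plan is to show that $C_0^\infty(\Real^d \setminus \{0\})$ is dense in $\Dom(h_B)$ with respect to the form norm~\eqref{norm}. Since $C_0^\infty(\Real^d)$ is a core of~$h_B$ by definition, it suffices to approximate every $\psi \in C_0^\infty(\Real^d)$ by functions in $C_0^\infty(\Real^d \setminus \{0\})$ in the $\|\cdot\|_{h_B}$-norm. The natural device is to multiply $\psi$ by a family of radial cut-off functions $\chi_n$ that vanish in a shrinking neighbourhood of the origin, are equal to~$1$ outside a slightly larger ball, and are suitably chosen so that the gradient term $\int_{\Real^d} |\nabla\chi_n|^2 |\psi|^2$ — which is the only genuinely dangerous contribution — tends to zero. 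First I would fix $\psi \in C_0^\infty(\Real^d)$ and set $\psi_n := \chi_n \psi$, and observe that $\psi_n \to \psi$ in $\sii(\Real^d)$ trivially by dominated convergence, since $|\chi_n| \leq 1$ and $\chi_n \to 1$ pointwise away from~$0$.

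Next I would expand the form. Writing $(\nabla - iA)\psi_n = \chi_n (\nabla - iA)\psi + (\nabla\chi_n)\,\psi$, one gets
\begin{equation*}
  h_B[\psi_n - \psi]
  \leq 2\int_{\Real^d} |\chi_n - 1|^2\,|(\nabla-iA)\psi|^2
  + 2\int_{\Real^d} |\nabla\chi_n|^2\,|\psi|^2
  + |c_d|\int_{\Real^d}\frac{|\chi_n^2-1|\,|\psi|^2}{|x|^2}
  \,,
\end{equation*}
up to taking care of a cross term that I would absorb by Cauchy--Schwarz, and in fact it is cleanest to estimate $h_B[\psi_n] - h_B[\psi]$ directly. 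The first and third integrals go to zero by dominated convergence; for the third one I use that $|x|^{-2}|\psi|^2 \in L^1$ near the origin, which holds because $\psi$ is smooth and $d \geq 2$ (indeed $|\psi|^2/|x|^2$ is integrable near $0$ when $d \geq 3$, and when $d = 2$ we have $c_d = 0$ so the term is simply absent). The decisive point is the middle integral. Here I choose the standard logarithmic cut-off: for $d \geq 3$ one may take $\chi_n$ supported away from $|x| \leq 1/n$, interpolating on the annulus $1/n \leq |x| \leq 1/\sqrt{n}$ (or $1/n^2 \leq |x| \leq 1/n$) in such a way that $\int |\nabla\chi_n|^2\,dx \lesssim n^{-(d-2)} \to 0$ when $d \geq 3$; since $\psi$ is bounded, $\int |\nabla\chi_n|^2 |\psi|^2 \to 0$. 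For $d = 2$ the power counting fails for a single-scale cut-off, and one instead uses the classical double-logarithmic cut-off, $\chi_n(x) = \min\{1, \max\{0, \log(|x|\,n^2)/\log n\}\}$ supported in $1/n^2 \leq |x|$, for which $\int_{\Real^2} |\nabla\chi_n|^2\,dx \sim (\log n)^{-1} \to 0$; again boundedness of $\psi$ finishes the estimate.

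Assembling these, $\|\psi_n - \psi\|_{h_B} \to 0$, so $\psi$ lies in the closure of $C_0^\infty(\Real^d\setminus\{0\})$, and since $\psi$ was an arbitrary element of the core $C_0^\infty(\Real^d)$, the claim follows. The main obstacle is the genuinely two-dimensional case, where the capacity of a point is zero only in the borderline logarithmic sense: a naive power-type cut-off has gradient energy bounded below away from zero, and one must use the double-logarithmic construction to kill the term $\int |\nabla\chi_n|^2 |\psi|^2$. (This same two-dimensional subtlety is, of course, exactly the reason the point $x = 0$ is removable here even though $c_2 = 0$.) Everything else is dominated convergence, using only that $A$ is smooth hence locally bounded on the compact support of $\psi$, and that $d \geq 2$ makes $|x|^{-2}|\psi|^2$ locally integrable whenever the coefficient $c_d$ is nonzero.
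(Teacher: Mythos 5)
Your argument is correct and is exactly the approximation the paper has in mind: the paper's own proof simply cites the cut-off family from the proof of Corollary~VIII.6.4 in Edmunds--Evans (the same double-logarithmic cut-off you use for $d=2$, which the paper reproduces explicitly later in the proof of Proposition~\ref{Prop.log}) and leaves the details to the reader, so your write-up fills in precisely those details. The only cosmetic point is that your explicit $d=2$ cut-off $\min\{1,\max\{0,\log(|x|\,n^{2})/\log n\}\}$ is merely Lipschitz, so to land in $C_0^\infty(\Real^d\setminus\{0\})$ you should compose with a smooth transition function~$\xi$ as the paper does; this does not affect any of the estimates.
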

\begin{proof}
It is enough to show that for any $\psi \in C_0^\infty(\Real^d)$
there exists a family of functions $\psi_\delta \in C_0^\infty(\Real^d \setminus \{0\})$
such that $\|\psi-\psi_\delta\|_{h_B} \to 0$ as $\delta \to 0$.
For this approximation family one can take for instance that
of \cite[proof of Corol.~VIII.6.4]{Edmunds-Evans}.
We leave the details to the reader.
\end{proof}

It is well known that for different magnetic potentials
whose exterior derivative yields the same magnetic tensor
the corresponding operators are unitarily equivalent.
Consequently, the spectrum as well as the validity of 
the Hardy inequality of Theorem~\ref{Thm.Hardy}
and the decay rate~\eqref{rate} of Theorem~\ref{Thm.main.pre}
do not depend on the particular choice of the magnetic potential.

Using the gauge freedom, in this paper
we often (but not exclusively)
choose the Poincar\'e gauge of~\eqref{Biot-Savart}.
This choice is convenient because we wish to work
in the spherical coordinates~\eqref{spherical}
in which the radial component~$\mathsf{A}_d$ vanishes.
We introduce the unitary transform
\begin{equation}\label{unitary.spherical}
  \mathcal{U}: \sii(\Real^d) \to
  \sii\big(S^{d-1}\times(0,\infty),r^{d-1} \;\! \der\sigma \, \der r\big) :
  \left\{\psi \mapsto \psi\circ\chart\right\}
  \,,
\end{equation}
where~$\der\sigma$ is the volume element of~$S^{d-1}$. Then
$H_B$~is unitarily equivalent to the operator
$\mathsf{H}_B := \mathcal{U} H_B \mathcal{U}^{-1}$
in $\sii\big(S^{d-1}\times(0,\infty),r^{d-1} \;\! \der\sigma \, \der r\big)$,
which is associated with the quadratic form
$\mathsf{h}_B[\phi] := h_B[\mathcal{U}^{-1}\phi]$,
$\Dom(\mathsf{h}_B) := \mathcal{U} \Dom(h_B)$.
Using~\eqref{metric.spherical}
and recalling the notation~$\der'$ for the exterior derivative
on the sphere~$S^{d-1}$,
we have
\begin{align}\label{form.spherical}
  \mathsf{h}_B[\phi]
  &= \int_{S^{d-1}\times(0,\infty)}
  \left[
  \frac{\big|(\der' - i\mathsf{A})\phi\big|_{S^{d-1}}^2}{r^2}
  + |\phi_{,r}|^2
  - c_d \, \frac{|\phi|^2}{r^2}
  \right]
  r^{d-1} \;\! \der\sigma \, \der r
  \,.
\end{align}
Here we prefer to write $\phi_{,r}=\partial\phi/\partial r$
instead of $\phi_{,d}=\partial\phi/\partial q^d$.
Also, hereafter we usually suppress the arguments on which the functions depend.

We conclude this section by commenting on a proof of~\eqref{spectrum}.
The fact that no negative point belongs to the spectrum of~$H_B$
follows from the diamagnetic inequality~\eqref{diamagnetic}
and the Hardy inequality~\eqref{Hardy}.
On the other hand, to show that every point in $[0,\infty)$
belongs to the spectrum of~$H_B$,
one can use the Weyl criterion,
namely its version adapted to quadratic forms in~\cite[Thm.~5]{KL}.

%-----------------------------%
\section{The Hardy inequality}\label{Sec.Hardy}
%-----------------------------%
%
In this section we give a proof of the magnetic Hardy inequality
of Theorem~\ref{Thm.Hardy}. We present two approaches,
where the first one does not yield Theorem~\ref{Thm.Hardy}
under the stated minimal assumptions,
but on the other hand, it provides
the constant~$c_{d,B}$ in a more explicit form through~$\nu_B$.
The basic idea of both the approaches
is to derive first a ``local'' Hardy inequality,
\ie~a version of~\eqref{Hardy.magnet} where the weight
in the integral on the right hand side is not necessarily
an everywhere positive function.

\subsection{An auxiliary result}
We shall essentially use the following one-dimensional inequalities.
\begin{Lemma}\label{Lem.aux}
Let $r_0 > 0$. There exists a positive constant~$\gamma$ depending on~$r_0$
such that for all $f \in C_0^\infty(\Real\setminus\{r_0\})$,
\begin{align}
  \int_0^{r_0}
  |f'(r)|^2 \, r \, \der r
  & \geq \gamma
  \int_0^{r_0} |f(r)|^2 \, r \, \der r
  \,,
  \label{aux1}
  \\
  \int_{r_0}^\infty
  |f'(r)|^2 \, r \, \der r
  & \geq \gamma
  \int_{r_0}^\infty \frac{|f(r)|^2}{r^2\log^2(r/r_0)} \, r \, \der r
  \,.
  \label{aux2}
\end{align}
\end{Lemma}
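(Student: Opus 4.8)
The plan is to prove the two inequalities separately, each by a standard one-dimensional Hardy-type argument adapted to the weight $r\,\der r$ on a half-line. The key point is that the forbidden point $r_0$ splits the domain into two intervals, $(0,r_0)$ and $(r_0,\infty)$, and on each of them $f$ vanishes near $r_0$ (and, being compactly supported, also near the other endpoint), so there is no boundary term to worry about.

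For \eqref{aux1}, on the bounded interval $(0,r_0)$ the inequality is just a Poincar\'e-type estimate for the weighted space $\sii\big((0,r_0),r\,\der r\big)$: the quadratic form $f \mapsto \int_0^{r_0} |f'|^2\,r\,\der r$ on functions vanishing near $r=r_0$ has a spectral gap above $0$, and $\gamma$ can be taken to be its bottom eigenvalue. Concretely, one writes $|f(r)|^2 = -\int_r^{r_0} (|f|^2)'(s)\,\der s \le 2\int_0^{r_0} |f(s)||f'(s)|\,\der s$, multiplies by $r$, integrates, and uses Cauchy--Schwarz together with $\int_0^{r_0} r\,\der r = r_0^2/2$ to absorb the right-hand side; this yields a constant $\gamma$ depending only on $r_0$. (The compact support away from $0$ makes all integrals finite, and a density argument is not even needed here.)

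For \eqref{aux2}, on $(r_0,\infty)$ the natural candidate for the ground-state substitution is $\log(r/r_0)$, which is the function annihilated by the radial operator $-r^{-1}\big(r\,(\cdot)'\big)'$ associated with $\int_{r_0}^\infty |f'|^2\,r\,\der r$ on the half-line with Dirichlet condition at $r_0$. The standard route is the substitution (Hardy's trick): write $f = \log(r/r_0)\, h$, compute
\[
  \int_{r_0}^\infty |f'|^2\,r\,\der r
  = \int_{r_0}^\infty \log^2(r/r_0)\,|h'|^2\, r\,\der r
  + \int_{r_0}^\infty \frac{|h|^2}{r}\,\der r
\]
after an integration by parts that produces no boundary contribution (the cross term integrates to $\tfrac12\int_{r_0}^\infty \big(\log^2(r/r_0)\big)' (|h|^2)'\, \der r$, integrated by parts once more using $\big(\log^2\big)'' r + \big(\log^2\big)' = (2/r)$). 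The second term on the right is exactly $\int_{r_0}^\infty \frac{|f|^2}{r^2\log^2(r/r_0)}\,r\,\der r$, so the inequality follows with $\gamma = 1$ on this piece; taking the minimum of the two constants gives a single $\gamma>0$ depending on $r_0$.

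The main obstacle is purely bookkeeping: one must be careful that the substitution $f = \log(r/r_0)h$ and the associated integrations by parts are legitimate near $r=r_0$, where $\log(r/r_0)$ vanishes; this is where the hypothesis $f \in C_0^\infty(\Real\setminus\{r_0\})$ is used, since then $h$ is smooth and compactly supported on $(r_0,\infty)$ and $f$ itself vanishes identically near $r_0$, so the degenerate weight causes no trouble and no boundary terms arise. I expect no genuine difficulty beyond this, and the constant $\gamma$ in the statement is just the smaller of the Poincar\'e constant on $(0,r_0)$ and the value $1$ coming from the logarithmic Hardy inequality on $(r_0,\infty)$.
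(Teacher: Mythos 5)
Your argument for \eqref{aux2} contains a genuine error. Substituting $f=\ell h$ with $\ell(r):=\log(r/r_0)$ (the \emph{exact} zero-energy solution of $-(r\,(\cdot)')'=0$), one finds
\[
  |f'|^2\,r=\frac{|h|^2}{r}+\big(|h|^2\big)'\,\ell+\ell^2\,|h'|^2\,r ,
  \qquad
  \int_{r_0}^\infty \big(|h|^2\big)'\,\ell\,\der r=-\int_{r_0}^\infty\frac{|h|^2}{r}\,\der r ,
\]
so the cross term \emph{cancels} the first term exactly and the identity is
$\int_{r_0}^\infty|f'|^2\,r\,\der r=\int_{r_0}^\infty \ell^2\,|h'|^2\,r\,\der r$, with no positive remainder $\int |h|^2 r^{-1}\der r$. (Your version drops the factor $r$ in the cross term before integrating by parts; restoring it and using your own identity $(\ell^2)''r+(\ell^2)'=2/r$ gives exactly $-\int|h|^2r^{-1}\der r$.) Hence this substitution only yields $\int|f'|^2 r\,\der r\ge 0$ and proves nothing. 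Moreover the constant $\gamma=1$ you claim is false: the change of variables $t=\log(r/r_0)$ turns \eqref{aux2} into the classical one-dimensional Hardy inequality $\int_0^\infty|g'|^2\der t\ge\gamma\int_0^\infty|g|^2t^{-2}\der t$, whose sharp constant is $1/4$. The repair is to use a \emph{sub}-solution: either substitute $f=\ell^{1/2}h$, or (as the paper does) expand $\int_{r_0}^\infty|f'-\tfrac{c}{r\ell}f|^2r\,\der r\ge0$ and optimise at $c=1/2$, which gives $\gamma=1/4$; alternatively just invoke the 1D Hardy inequality after the logarithmic change of variables.

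For \eqref{aux1} your spectral-gap statement is the right one (it is the paper's proof: the form is the radial part of the Dirichlet Laplacian on the disk $D_{r_0}$, whose first eigenfunction is radial, so $\gamma=\lambda_1(D_{r_0})$ works), but the ``concrete'' Cauchy--Schwarz chain you sketch does not close: after $\int_0^{r_0}|f|^2r\,\der r\le r_0^2\int_0^{r_0}|f||f'|\,\der s$, splitting $|f||f'|$ by Cauchy--Schwarz forces either a factor $\int_0^{r_0}|f|^2 s^{-1}\der s$ (which may diverge, since $f$ need not vanish at $0$) or a factor $\int_0^{r_0}|f'|^2\der s$ that is not uniformly controlled by $\int_0^{r_0}|f'|^2 s\,\der s$ because the weight degenerates at the origin. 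An elementary fix: $|f(r)|\le\int_r^{r_0}|f'(s)|\der s\le\big(\int_0^{r_0}|f'|^2 s\,\der s\big)^{1/2}\big(\log(r_0/r)\big)^{1/2}$, and $\int_0^{r_0}\log(r_0/r)\,r\,\der r=r_0^2/4$, giving $\gamma=4/r_0^2$.
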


The inequalities are rather elementary and probably well known
(see, \eg, \cite{MR1605678} for a usage of the first estimate),
so we leave the proofs to the reader.
We note that the left hand sides of~\eqref{aux1} and~\eqref{aux2}
are just radial parts of the quadratic form of the two-dimensional Laplacian.

\subsection{The Poincar\'e gauge approach}
The first idea is to pass to the spherical coordinates~\eqref{spherical},
choose the Poincar\'e gauge~\eqref{Poincare}
and employ the definition of the function~$\nu_B$ given in~\eqref{nu.r}.
With help of Fubini's theorem, we thus obtain from~\eqref{form.spherical}
\begin{equation}\label{approach.Poincare}
  \mathsf{h}_B[\phi]
  \geq \int_{S^{d-1}\times(0,\infty)}
  \left[
  |\phi_{,r}|^2
  + \frac{\nu_B(r)-c_d}{r^2} \, |\phi|^2
  \right]
  r^{d-1} \;\! \der\sigma \, \der r
  \,,
\end{equation}
for any $\phi := \mathcal{U}\psi$,
where~$\psi$ is an arbitrary function from $C_0^\infty(\Real^d)$
and~$\mathcal{U}$ is the unitary transform~\eqref{unitary.spherical}.
We remark that $\nu_B(r) = \mathcal{O}(r^2)$ as $r \to 0$,
\cf~\eqref{passing},
so that $\nu_B(r)/r^2$ has actually no singularity at $r=0$.

Next we employ an elementary inequality ($d\geq 2$)
\begin{equation}\label{Hardy.1D}
  \forall \phi \in C_0^\infty(\Real)
  \,, \quad
  \int_0^\infty |\phi'(r)|^2 \, r^{d-1} \;\! \der r
  \geq c_d
  \int_0^\infty \frac{|\phi(r)|^2}{r^2} \, r^{d-1} \;\! \der r
  \,.
\end{equation}
It can be deduced from~\eqref{Hardy}
when written in the spherical coordinates
and applied to radially symmetric functions
(with help of a density argument to allow arbitrary values $\phi'(0)$), but it can be also proved directly.
Using~\eqref{Hardy.1D} in~\eqref{approach.Poincare}
and passing back to the Cartesian coordinates,
we conclude with the following local Hardy inequality.
\begin{Proposition}\label{Prop.Hardy.Poincare}
Let $d \geq 2$.
Suppose that~$B$ is smooth and closed.
Then
\begin{equation}\label{Hardy.Poincare}
  \forall \psi \in C_0^\infty(\Real^d) \,, \quad
  h_B[\psi]
  \geq
  \int_{\Real^d} \frac{\nu_B(|x|)}{|x|^2} \, |\psi(x)|^2 \, \der x
  \,.
\end{equation}
\end{Proposition}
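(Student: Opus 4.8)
The plan is to follow exactly the three-step strategy that the surrounding text has already set up: (i) pass to spherical coordinates and use the Poincar\'e gauge; (ii) bound the angular part of the form from below by $\nu_B(r)$ via the variational characterisation~\eqref{nu.r}; (iii) discard the genuinely radial contribution using the one-dimensional Hardy inequality~\eqref{Hardy.1D}. First I would fix $\psi \in C_0^\infty(\Real^d)$ and, invoking the gauge invariance discussed after~\eqref{form}, assume without loss of generality that $A$ is the Poincar\'e-gauge potential~\eqref{Biot-Savart}, so that $\mathsf{A}_d = 0$ and the form $\mathsf{h}_B[\phi]$ with $\phi := \mathcal{U}\psi$ takes the clean shape in the last line of~\eqref{form.spherical}.

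The key step is the lower bound~\eqref{approach.Poincare}. For each fixed $r>0$ the map $\sigma \mapsto \phi(\sigma,r)$ belongs to $H^1(S^{d-1})$ (it is the restriction of a smooth compactly supported function), so by the definition of $\nu_B(r)$ in~\eqref{nu.r},
\begin{equation*}
  \int_{S^{d-1}} \big|(\der'-i\mathsf{A}(\sigma,r))\phi(\sigma,r)\big|_{S^{d-1}}^2 \, \der\sigma
  \geq \nu_B(r) \int_{S^{d-1}} |\phi(\sigma,r)|^2 \, \der\sigma
  \,.
\end{equation*}
Multiplying by $r^{d-1}/r^2$ and integrating over $r \in (0,\infty)$ — which is legitimate by Fubini's theorem, both sides being non-negative and the integrand being smooth and compactly supported away from $r=\infty$ — yields
\begin{equation*}
  \int_{S^{d-1}\times(0,\infty)} \frac{\big|(\der'-i\mathsf{A})\phi\big|_{S^{d-1}}^2}{r^2} \, r^{d-1} \, \der\sigma \, \der r
  \geq \int_{S^{d-1}\times(0,\infty)} \frac{\nu_B(r)}{r^2} \, |\phi|^2 \, r^{d-1} \, \der\sigma \, \der r
  \,,
\end{equation*}
and substituting this into~\eqref{form.spherical} gives~\eqref{approach.Poincare} precisely. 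Here I would pause to note, as the text does, that $\nu_B(r) = \mathcal{O}(r^2)$ as $r \to 0$ by~\eqref{passing}, so $\nu_B(r)/r^2$ is bounded near the origin and every integral above is finite.

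It then remains to dispose of the leftover $-c_d/r^2$ term. Applying the one-dimensional Hardy inequality~\eqref{Hardy.1D} to the slices $r \mapsto \phi(\sigma,r)$ for each fixed $\sigma$ (again using Fubini and, if necessary, a density argument to allow arbitrary boundary value at $r=0$, exactly as remarked after~\eqref{Hardy.1D}) shows
\begin{equation*}
  \int_{S^{d-1}\times(0,\infty)} |\phi_{,r}|^2 \, r^{d-1} \, \der\sigma \, \der r
  \geq c_d \int_{S^{d-1}\times(0,\infty)} \frac{|\phi|^2}{r^2} \, r^{d-1} \, \der\sigma \, \der r
  \,,
\end{equation*}
so the right-hand side of~\eqref{approach.Poincare} is bounded below by $\int \nu_B(r) r^{-2} |\phi|^2 r^{d-1} \, \der\sigma \, \der r$. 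Passing back to Cartesian coordinates through the unitary $\mathcal{U}$ of~\eqref{unitary.spherical}, under which $r^{d-1} \, \der\sigma \, \der r$ becomes $\der x$, $r$ becomes $|x|$, and $\phi$ becomes $\psi$, turns this into~\eqref{Hardy.Poincare}, completing the proof. I do not anticipate a genuine obstacle here: the only points requiring a word of care are the Fubini interchanges (harmless, since everything in sight is non-negative and all functions are smooth with compact support in $\Real^d$) and the slight density argument needed to apply~\eqref{Hardy.1D} to slices of $\phi$ that need not vanish at $r=0$; both are standard and already flagged in the preceding discussion.
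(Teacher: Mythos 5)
Your proposal is correct and follows essentially the same route as the paper: pass to spherical coordinates in the Poincar\'e gauge via~\eqref{form.spherical}, bound the angular part pointwise in~$r$ by~$\nu_B(r)$ using the variational definition~\eqref{nu.r} together with Fubini to obtain~\eqref{approach.Poincare}, and then discard the residual radial term $|\phi_{,r}|^2 - c_d|\phi|^2/r^2$ using the one-dimensional Hardy inequality~\eqref{Hardy.1D} slice by slice in~$\sigma$. The points of care you flag (the Fubini interchange, the density argument allowing $\phi$ not to vanish at $r=0$, and the observation that $\nu_B(r)=\mathcal{O}(r^2)$ near the origin) are exactly the ones the text mentions, so there is nothing missing.
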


A defect of this inequality is that~$\nu_B$ may vanish identically
even if $B \not= 0$, \cf~Proposition~\ref{Prop.eq.r}.
If this function is non-trivial, however,
the local inequality can be extended to the whole~$\Real^d$.
\begin{Theorem}\label{Thm.Hardy.Poincare}
Let $d \geq 2$.
Suppose that~$B$ is smooth and closed.
If $\nu_B\not=0$ (\ie~the function~$\nu_B$ is non-trivial),
then there exists a positive constant~$c_{d,B}$
such that~\eqref{Hardy.magnet} holds.
\end{Theorem}
\begin{proof}
By virtue of Lemma~\ref{Lem.core},
it is enough to prove~\eqref{Hardy.magnet}
for $\psi \in C_0^\infty(\Real^d \setminus \{0\})$.
Fixing such a function, we denote by $\phi:=\mathcal{U}\psi$
its counterpart in the spherical coordinates throughout the proof.
It follows from the variational definition~\eqref{nu.r}
that~$\nu_B$ is Lipschitz continuous.
Hence, the hypothesis ensures that there exists a positive constant~$\nu$
(depending on the behaviour of~$\nu_B$)
and a bounded open interval $I \subset (0,\infty)$ such that
$\nu_B(r) / r^2 \geq \nu > 0$ for all $r \in I$.
From Proposition~\ref{Prop.Hardy.Poincare} we thus conclude
\begin{equation}\label{step1}
  \forall \psi \in C_0^\infty(\Real^d) \,, \quad
  h_B[\psi]
  \geq
  \nu \int_{\Real^d} \chi_I(x) \, |\psi(x)|^2 \, \der x
  \,,
\end{equation}
where~$\chi_I$ denotes the characteristic function
of the spherical shell $\{x \in \Real^d \,|\, |x| \in I\}$.

To extend this local Hardy inequality to~$\Real^d$,
we employ the presence of the other terms that
we neglected when passing from~\eqref{approach.Poincare}
to~\eqref{Hardy.Poincare}
\begin{equation*}
  \mathsf{h}_B[\phi]
  \geq \int_{S^{d-1}\times(0,\infty)}
  \left[
  |\phi_{,r}|^2
  - \frac{c_d}{r^2} \, |\phi|^2
  \right]
  r^{d-1} \;\! \der\sigma \, \der r
  \,.
\end{equation*}
If $d=2$, $c_2=0$ and the right hand side is just
an integral of the derivative.
To obtain the same form for any $d \geq 3$,
we perform the standard Hardy transform $f := r^{(d-2)/2} \phi$
to obtain
\begin{equation}\label{Hardy.transform}
  \int_{S^{d-1}\times(0,\infty)}
  \left[
  |\phi_{,r}|^2
  - \frac{c_d}{r^2} \, |\phi|^2
  \right]
  r^{d-1} \;\! \der\sigma \, \der r
  = \int_{S^{d-1}\times(0,\infty)}
  |f_{,r}|^2
  \, r \, \der\sigma \, \der r
  \,.
\end{equation}
Denoting by~$r_0$ the middle point of~$I$,
we introduce a cut-off function
$\xi \in C^\infty((0,\infty))$
such that $|\xi| \leq 1$,
$\xi$~vanishes in a neighbourhood of~$r_0$
and $\xi=1$ outside the interval~$I$.
We keep the same notation~$\xi$ for
the function $1 \otimes \xi$ on $S^{d-1}\times(0,\infty)$.
Writing $f=\xi f + (1-\xi) f$
and using Lemma~\ref{Lem.aux} with help of Fubini's theorem
(\cf~\cite[proof of Thm.~3.1]{KZ1} for a similar estimate), 
we get
\begin{eqnarray*}
\lefteqn{
  \int_{S^{d-1}\times(0,\infty)}
  \frac{|f|^2}{1+r^2\log^2(r/r_0)} \, r \, \der\sigma \, \der r
}
\\
  &&\leq
  \frac{4}{\gamma} \int_{S^{d-1}\times (0,\infty)}
  |f_{,r}|^2 \ r \, \der\sigma \, \der r
  + \left( \frac{4 \|\xi'\|_\infty^2}{\gamma} + 2 \right)
  \int_{S^{d-1}\times I}
  |f|^2 \, r \, \der\sigma \, \der r
  \,.
\end{eqnarray*}
Here $\|\xi'\|_\infty$ is the supremum norm of the derivative of~$\xi$
as a function on~$(0,\infty)$.
Coming back to the test function~$\psi$,
we have therefore proved
\begin{equation}\label{step2}
  h_B[\psi]
  \geq \frac{\gamma}{4} \int_{\Real^d}
  \frac{|\psi(x)|^2}{1+|x|^2\log^2(|x|/r_0)} \, \der x
  - \left(\|\xi'\|_\infty^2+\frac{\gamma}{2}\right)
  \int_{\Real^d}
  \chi_I(x) \, |\psi(x)|^2 \, \der x
  \,.
\end{equation}

Finally, combining~\eqref{step1} and~\eqref{step2},
we get
$$
  h_B[\psi]
  \geq
  \left[
  (1-\eps) \nu - \eps \left(\|\xi'\|_\infty^2+\frac{\gamma}{2}\right)
  \right]
  \int_{\Real^d} \chi_I(x) \, |\psi(x)|^2 \, \der x
  + \eps \, \frac{\gamma}{4} \int_{\Real^d}
  \frac{|\psi(x)|^2}{1+|x|^2\log^2(|x|/r_0)} \, \der x
$$
with any $\eps > 0$.
Choosing~$\eps$ in such a way that the square bracket vanishes,
we obtain~\eqref{Hardy.magnet} with
$$
  c_{d,B} \geq
  \frac{\displaystyle \frac{\gamma}{4} \, \nu}
  {\displaystyle \nu+\|\xi'\|_\infty^2+\frac{\gamma}{2}} \
  \inf_{r \in (0,\infty)} \frac{1+r^2\log^2(r)}{1+r^2\log^2(r/r_0)}
  >0
  \,.
$$
The theorem is proved.
\end{proof}

Assuming instead of $\nu_B \not= 0$ the stronger hypothesis
that~$\nu_B$ is ``non-trivial at infinity'', \ie~$\nu_B(\infty)\not=0$,
we can get rid of the logarithm on the right hand side of~\eqref{Hardy}.
\begin{Theorem}[Laptev and Weidl~\cite{Laptev-Weidl_1999}]\label{Thm.Hardy.LW}
Let $d \geq 2$.
Suppose that~$B$ is smooth, closed and compactly supported.
If $\nu_B(\infty)\not=0$,
then there exists a positive constant~$\tilde{c}_{d,B}$ such that
for any smooth~$A$ satisfying $\der A = B$,
the following inequality holds
\begin{equation}\label{Hardy.magnet.LW}
  \forall \psi \in C_0^\infty(\Real^d) \,, \quad
  \int_{\Real^d}
  |(\nabla-iA)\psi(x)|^2 \, \der x
  - c_d
  \int_{\Real^d} \frac{|\psi(x)|^2}{|x|^2} \, \der x
  \geq \tilde{c}_{d,B}
  \int_{\Real^d} \frac{|\psi(x)|^2}{1+|x|^2} \, \der x
  \,.
\end{equation}
\end{Theorem}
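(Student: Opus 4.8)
The plan is to combine the \emph{far-field} decay, which in the presence of the stronger hypothesis $\nu_B(\infty)\neq 0$ produces the non-logarithmic weight $|x|^{-2}$ at infinity, with a \emph{near-field} estimate controlling the $\sii$-mass of $\psi$ on a bounded ball. As in the proof of Theorem~\ref{Thm.Hardy.Poincare}, Lemma~\ref{Lem.core} reduces the problem to $\psi \in C_0^\infty(\Real^d\setminus\{0\})$, and by gauge invariance (\cf~\eqref{unitary}) I may work in the Poincar\'e gauge and pass to spherical coordinates, writing $\phi := \mathcal{U}\psi$ and $f := r^{(d-2)/2}\phi$ (with $f:=\phi$ if $d=2$), a smooth function compactly supported in $S^{d-1}\times(0,\infty)$, so that all radial integrations by parts are free of boundary terms.

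First I would treat the far field. Fix $R>0$ with $\supp|B|\subset D_R$. By~\eqref{compact} and~\eqref{limits.nu}, $\nu_B(r)=\nu_B(\infty)$ for all $r\geq R$, so Proposition~\ref{Prop.Hardy.Poincare}, together with $\nu_B\geq 0$ and the trivial bound $(1+|x|^2)^{-1}\leq |x|^{-2}$, gives
\begin{equation*}
  h_B[\psi] \;\geq\; \nu_B(\infty) \int_{|x|\geq R} \frac{|\psi(x)|^2}{1+|x|^2} \, \der x
  \,.
\end{equation*}
Since $\nu_B(\infty)\neq 0$, this already controls the contribution of $\{|x|\geq R\}$ to the right-hand side of~\eqref{Hardy.magnet.LW}, and in particular the $\sii$-mass of $\psi$ on any fixed spherical shell inside $\{|x|\geq R\}$.

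Next I would handle $\{|x|<2R\}$ using only the radial part of the form: discarding the non-negative magnetic term in~\eqref{form.spherical} and applying the Hardy transform identity~\eqref{Hardy.transform} yields $h_B[\psi]\geq \int_{S^{d-1}\times(0,\infty)} |f_{,r}|^2\,r\,\der\sigma\,\der r$. Choosing a cut-off $\xi\in C_0^\infty(\Real\setminus\{2R\})$ with $0\leq\xi\leq 1$, $\xi\equiv 1$ on $(0,3R/2)\cup(5R/2,\infty)$, writing $f=\xi f+(1-\xi)f$, applying the one-dimensional inequality~\eqref{aux1} with $r_0=2R$ to $\xi f$ for each fixed $\sigma$, and integrating over $S^{d-1}$ by Fubini (exactly the chain of estimates used in Theorem~\ref{Thm.Hardy.Poincare}, but now \emph{without} ever invoking~\eqref{aux2}), I would obtain
\begin{equation*}
  \int_{|x|<2R} |\psi(x)|^2 \, \der x \;\leq\; \frac{4}{\gamma}\, h_B[\psi] \;+\; C_R \int_{3R/2\leq|x|\leq 5R/2} |\psi(x)|^2 \, \der x
  \,.
\end{equation*}
Because $(1+|x|^2)^{-1}\leq 1$, the left-hand side dominates $\int_{|x|<2R}(1+|x|^2)^{-1}|\psi|^2\,\der x$, while the shell integral on the right lies in $\{|x|\geq R\}$ and is therefore bounded by $C_R'\,h_B[\psi]$ via the far-field step.

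Adding the two regions gives $\int_{\Real^d}(1+|x|^2)^{-1}|\psi|^2\,\der x\leq C\,h_B[\psi]$, i.e.~\eqref{Hardy.magnet.LW} with $\tilde c_{d,B}=C^{-1}>0$, and the general smooth~$A$ follows by gauge invariance. The main obstacle is the seemingly circular absorption in the near-field step: the leftover shell term manufactured by the cut-off must be fed back into the estimate, and this closes precisely because the shell sits in the region $\{|x|\geq R\}$ where the far-field bound is available — which is exactly why the hypothesis $\nu_B(\infty)\neq 0$ (stronger than $\nu_B\neq 0$ in Theorem~\ref{Thm.Hardy.Poincare}) is what lets one dispense with the logarithm.
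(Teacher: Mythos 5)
Your proof is correct. It shares with the paper the same far-field step (Proposition~\ref{Prop.Hardy.Poincare} plus $\nu_B(r)=\nu_B(\infty)>0$ for $r\geq R$, giving the weight $(1+|x|^2)^{-1}$ outside the support of~$B$), but the near-field step is genuinely different. The paper simply invokes the already-established Theorem~\ref{Thm.Hardy.Poincare} and observes that on a fixed ball $D_R$ the logarithmic weight $\big(1+|x|^2\log^2|x|\big)^{-1}$ is comparable to $(1+|x|^2)^{-1}$, with comparison constant $a_R$; this makes the proof a three-line corollary but routes it through the logarithmic machinery (in particular through inequality~\eqref{aux2}). You instead rerun the cut-off argument from scratch using only the Dirichlet--Poincar\'e inequality~\eqref{aux1} on $(0,2R)$, which produces a leftover $\sii$-mass on a shell contained in $\{|x|\geq R\}$; that remainder is then bounded \emph{additively} by $C\,h_B[\psi]$ via the far-field estimate, so no circularity or smallness condition arises. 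Your route is slightly longer but more self-contained (it never uses~\eqref{aux2} or Theorem~\ref{Thm.Hardy.Poincare}) and it isolates cleanly why the hypothesis $\nu_B(\infty)\neq 0$, rather than merely $\nu_B\neq 0$, is what eliminates the logarithm: the absorbing region for the cut-off remainder must lie where the quantitative far-field bound holds. The only cosmetic point is that your $\xi$, equal to~$1$ on $(5R/2,\infty)$, is not literally compactly supported -- but this is the same abuse of the notation $C_0^\infty(\Real\setminus\{r_0\})$ that the paper itself commits in the proof of Theorem~\ref{Thm.Hardy.Poincare}, and all that matters is that $\xi f$ vanishes near $2R$ so that~\eqref{aux1} applies.
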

\begin{proof}
Let $\psi \in C_0^\infty(\Real^d)$.
By Theorem~\ref{Thm.Hardy.Poincare}, we have
$$
  h_B[\psi]
  \geq
  c_{d,B} \int_{\Real^d} \frac{|\psi(x)|^2}{1+|x|^2\log^2(|x|)} \, \der x
  \geq
  c_{d,B} \int_{D_R} \frac{|\psi(x)|^2}{1+|x|^2\log^2(|x|)} \, \der x
  \geq
  c_{d,B} \, a_R \int_{D_R} \frac{|\psi(x)|^2}{1+|x|^2} \, \der x
$$
for any ball~$D_R$ of radius~$R$ centred at the origin, where
$$
  a_R := \inf_{r\in(0,R)} \frac{1+r^2}{1+r^2\log^2(r)}
$$
is obviously a positive constant. 	
At the same time, Proposition~\ref{Prop.Hardy.Poincare} yields
$$
  h_B[\psi]
  \geq
  \nu_B(\infty)
  \int_{\Real^d \setminus D_R} \frac{|\psi(x)|^2}{|x|^2} \, \der x
  \geq
  \nu_B(\infty)
  \int_{\Real^d \setminus D_R} \frac{|\psi(x)|^2}{1+|x|^2} \, \der x
  \,,
$$
where~$D_R$ is a ball containing the support of~$B$.
Combining these two inequalities, we get~\eqref{Hardy.magnet.LW} with
$$
  \tilde{c}_{d,B} \geq \frac{c_{d,B} \, a_R + \nu_B(\infty)}{2}
  > 0
  \,,
$$
where the best estimate is obtained for
$R := \sup\{|x| \,|\, x \in \supp|B|\}$.
\end{proof}
\begin{Remark}\label{Rem1}
Of course, Theorem~\ref{Thm.Hardy.LW} is void for $d \geq 3$,
where $\nu(\infty)=0$ by Proposition~\ref{Prop.eq.r}
and the compactness of the support of~$B$, \cf~\eqref{d.high}.
The only non-trivial situation is thus $d=2$,
where~\eqref{d.2} holds
and Theorem~\ref{Thm.Hardy.LW} is just a special case
of the celebrated magnetic Hardy inequality
of Laptev and Weidl established in~\cite[Thm.~1]{Laptev-Weidl_1999}.
\end{Remark}

\subsection{The gauge-free approach: proof of Theorem~\ref{Thm.Hardy}}
Following \cite[proof of Thm.~3.4]{Weidl_1999},
we start with the unitary transform
\begin{equation}\label{unitary.Weidl}
  \mathcal{V} : \sii(\Real^d) \to \sii\big(\Real^d,|x|^{-(d-2)} \,\der x\big):
  \left\{\psi \mapsto |x|^{(d-2)/2} \, \psi \right\}
  \,.
\end{equation}
It maps~$H_B$ into a unitarily equivalent operator
$T_B := \mathcal{V} H_B \mathcal{V}^{-1}$ in
$\sii\big(\Real^d,|x|^{-(d-2)} \,\der x\big)$,
which is associated with the quadratic form
$t_B[g] := h_B[\mathcal{V}^{-1} g]$, $\Dom(t_B):=\mathcal{V}\Dom(h_B)$.
By definition, $\Dom(t_B)$ is the closure of $C_0^\infty(\Real^d)$
with respect to the norm $\|\cdot\|_{t_B}$
which is defined in analogy with~\eqref{norm}.
By virtue of Lemma~\ref{Lem.core},
we could alternatively characterise $\Dom(t_B)$ through
the closure of $C_0^\infty(\Real^d \setminus \{0\})$.
On this more restricted core,
using
\begin{equation}\label{trick.Weidl}
  |(\nabla-iA)\mathcal{V}^{-1}g|^2
  = |x|^{-(d-2)}
  \left(
  |(\nabla-iA)g|^2 + c_d \, \frac{|g|^2}{|x|^2}
  - \frac{d-2}{2} \frac{x}{|x|^2} \cdot \nabla|g|^2
  \right)
\end{equation}
and integrating by parts with help of $\divergence(x/|x|^d)=0$,
it is straightforward to check the key identity
\begin{equation}\label{form.Weidl}
  t_B[g]
  = \int_{\Real^d}
  |(\nabla -iA) g(x)|^2 \, |x|^{-(d-2)} \, \der x
  \,.
\end{equation}
Since
$
  |x|^{-(d-2)} \, \der x = r \, \der\sigma \, \der r
$
in the spherical coordinates,
the right hand side of~\eqref{form.Weidl}
can be interpreted as a two-dimensional magnetic form.

Now we are inspired by the method
used in~\cite{K6-erratum.simple} and~\cite{KZ1}
to establish a Hardy-type inequality in twisted waveguides.
Instead of~$\nu_B$, we introduce a more global quantity
\begin{equation}\label{lambda}
  \mu_B(R)
  := \inf_{\stackrel[g \not= 0 ]{}{g \in C^\infty(\overline{D_R})}}
  \frac{\displaystyle
  \int_{D_R}
  \big|(\nabla- i A)g(x)\big|^2 \, |x|^{-(d-2)} \, \der x}
  {\displaystyle
  \int_{D_R}  |g(x)|^2 \, |x|^{-(d-2)} \,\der x}
  \,,
\end{equation}
where $D_R$ is the $d$-dimensional open ball of radius $R>0$
centred at the origin of~$\Real^d$
(we do not use the standard notation~$B_R$ for the ball
to avoid a confusion with the magnetic field~$B$).
$\mu_B(R)$ is the spectral threshold of
the self-adjoint operator~$T_B^R$
in $\sii\big(D_R,|x|^{-(d-2)} \,\der x\big)$
associated with the quadratic form
\begin{equation}\label{form.r}
  t_B^R[g] := \int_{D_R} \big|(\nabla-iA)g(x)\big|^2
  \, |x|^{-(d-2)} \,\der x
  \,, \qquad
  \Dom(t_B^R) :=
  \overline{C^\infty(\overline{D_R})}^{\|\cdot\|_{t_B^R}}
  \,,
\end{equation}
where $\|\cdot\|_{t_B^R}$ is defined in analogy with~\eqref{norm}.
Instead of the space $C^\infty(\overline{D_R})$,
we could take the closure of
restrictions of $C_0^\infty(\Real^d\setminus\{0\})$ to~$D_R$.

Since~$A$ is bounded on the ball $D_R$,
we clearly have $\Dom(t_B^R) = H^1(D_R)$ in $d=2$.
In any dimension, it is easy to see that
$\Dom(t_B^R) \subset H^1(D_R)$ as well as
$\sii\big(D_R,|x|^{-(d-2)} \,\der x\big) \subset \sii(D_R)$.
More importantly, employing the compactness of the embedding
$H^1(D_R) \hookrightarrow \sii(D_R)$ in two dimensions,
we may deduce from~\eqref{form.r} that
$\Dom(t_B^R)$ is compactly embedded
in $\sii\big(D_R,|x|^{-(d-2)} \,\der x\big)$.
Consequently, the infimum in~\eqref{lambda} is achieved
by a non-trivial function $g_1 \in \Dom(t_B^R)$
and $\mu_B(R)$ is just the first eigenvalue of~$T_B^R$.
$R \mapsto \mu_B(R)$ defines a continuous function on $(0,\infty)$.

The next result is an analogue of Proposition~\ref{Prop.Hardy.Poincare}
and follows directly from the definition~\eqref{lambda}
with help of the unitary equivalence of~$H_B$ and~$T_B$
through~\eqref{unitary.Weidl}.
\begin{Proposition}\label{Prop.Hardy.free}
Let $d \geq 2$.
Suppose that~$B$ is smooth and closed.
Then, for any $R>0$,
\begin{equation}\label{Hardy.free}
  \forall \psi \in C_0^\infty(\Real^d) \,, \quad
  h_B[\psi]
  \geq
  \mu_B(R)
  \int_{D_R} |\psi(x)|^2 \, \der x
  \,.
\end{equation}
\end{Proposition}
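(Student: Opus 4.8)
The plan is to combine the unitary equivalence of $H_B$ and $T_B$ afforded by $\mathcal{V}$ in~\eqref{unitary.Weidl} with the key identity~\eqref{form.Weidl}, and then merely restrict the integral appearing there to the ball~$D_R$ and invoke the variational definition~\eqref{lambda} of~$\mu_B(R)$. In other words, \eqref{Hardy.free} should be an essentially immediate consequence of the fact that, by construction, $\mu_B(R)$ is a Rayleigh-quotient lower bound for the magnetic form $t_B^R$ on~$D_R$.

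Concretely, I would proceed as follows. First, by Lemma~\ref{Lem.core} the space $C_0^\infty(\Real^d\setminus\{0\})$ is a core of~$h_B$, and since $\psi\mapsto\int_{D_R}|\psi|^2\,\der x$ is continuous with respect to $\|\cdot\|_{h_B}$ (being dominated by $\|\psi\|_{\sii(\Real^d)}^2$), it suffices to prove~\eqref{Hardy.free} for $\psi\in C_0^\infty(\Real^d\setminus\{0\})$; the general case then follows by approximating an arbitrary $\psi\in C_0^\infty(\Real^d)$ in the $h_B$-norm and passing to the limit in the inequality. For such a $\psi$ I would set $g:=\mathcal{V}\psi=|x|^{(d-2)/2}\psi$, which is again smooth and compactly supported away from the origin, so that its restriction to~$D_R$ lies in $C^\infty(\overline{D_R})\subset\Dom(t_B^R)$ and is thus an admissible competitor in~\eqref{lambda}. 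Using $h_B[\psi]=t_B[g]$ together with~\eqref{form.Weidl}, then dropping the non-negative contribution of $\Real^d\setminus D_R$, and finally applying~\eqref{lambda}, one obtains
$$
  h_B[\psi]=\int_{\Real^d}|(\nabla-iA)g(x)|^2\,|x|^{-(d-2)}\,\der x
  \geq\int_{D_R}|(\nabla-iA)g(x)|^2\,|x|^{-(d-2)}\,\der x
  \geq\mu_B(R)\int_{D_R}|g(x)|^2\,|x|^{-(d-2)}\,\der x,
$$
and since $|g(x)|^2\,|x|^{-(d-2)}=|\psi(x)|^2$, the right-hand side equals $\mu_B(R)\int_{D_R}|\psi(x)|^2\,\der x$, which is precisely~\eqref{Hardy.free}.

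There is no serious obstacle in this argument; the only point requiring attention is the behaviour at the origin. The function $g=|x|^{(d-2)/2}\psi$ need not be smooth at $x=0$ when $d$ is odd, which is exactly why one first restricts to the core $C_0^\infty(\Real^d\setminus\{0\})$ via Lemma~\ref{Lem.core}: on that core $g$ is genuinely smooth with support away from the origin, the identity~\eqref{form.Weidl} applies exactly as stated, and the restriction of $g$ to $D_R$ is a bona fide element of $\Dom(t_B^R)$. I would also remark that the infimum in~\eqref{lambda} runs over $C^\infty(\overline{D_R})$ (equivalently, over restrictions of $C_0^\infty(\Real^d\setminus\{0\})$), so no boundary condition on $\partial D_R$ is imposed and $g|_{D_R}$ is admissible without any modification. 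Everything else is a routine unwinding of definitions.
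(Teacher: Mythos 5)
Your proof is correct and takes essentially the same route as the paper, which states that Proposition~\ref{Prop.Hardy.free} ``follows directly from the definition~\eqref{lambda} with help of the unitary equivalence of~$H_B$ and~$T_B$ through~\eqref{unitary.Weidl}'' --- exactly the argument you spell out, including the appropriate reduction to the core $C_0^\infty(\Real^d\setminus\{0\})$ on which the identity~\eqref{form.Weidl} is established. No gaps.
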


On the other hand, the following result is quite non-trivial
and makes the precedent proposition highly important
as a robust local Hardy inequality whenever $B \not= 0$.
\begin{Proposition}\label{Prop.Hardy.crucial}
Let $d \geq 2$.
Suppose that~$B$ is smooth and closed.
Then
$$
  \mu_B=0
  \qquad \Longleftrightarrow \qquad
  B = 0
  \,.
$$
\end{Proposition}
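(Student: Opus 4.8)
The plan is to prove the two implications separately: the implication $B=0 \Rightarrow \mu_B=0$ is immediate, and the converse $\mu_B=0 \Rightarrow B=0$ carries all the content. Throughout I read ``$\mu_B=0$'' as ``$\mu_B(R)=0$ for every $R>0$'', consistently with the fact that $R\mapsto\mu_B(R)$ is a function.

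For $B=0 \Rightarrow \mu_B=0$ I would argue as follows. Since $\Real^d$ is contractible, the Poincar\'e lemma produces a smooth $f:\Real^d\to\Real$ with $A=\nabla f$ (equivalently, in the Poincar\'e gauge~\eqref{Biot-Savart} one simply has $A=0$ when $B=0$). Then $g:=\Euler^{if}\restriction\overline{D_R}$ lies in $C^\infty(\overline{D_R})$ and satisfies $(\nabla-iA)g=0$ identically, so the Rayleigh quotient defining $\mu_B(R)$ in~\eqref{lambda} vanishes for this $g$. As $\mu_B(R)\ge 0$ always, we get $\mu_B(R)=0$ for every $R$.

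For the converse I would prove the sharper local statement: \emph{if $\mu_B(R)=0$ for some fixed $R>0$, then $B$ vanishes on $D_R$}; letting $R\to\infty$ then gives $B=0$ on $\Real^d$ since $\bigcup_{R>0}D_R=\Real^d$. Here I rely on the fact established just before the proposition that, by the compact embedding of $\Dom(t_B^R)$ into $\sii(D_R,|x|^{-(d-2)}\der x)$, the infimum in~\eqref{lambda} is attained by a non-trivial eigenfunction $g_1$ of $T_B^R$ with eigenvalue $\mu_B(R)$. If $\mu_B(R)=0$ then $t_B^R[g_1]=0$, i.e.\ $\int_{D_R}|(\nabla-iA)g_1|^2\,|x|^{-(d-2)}\,\der x=0$, hence $(\nabla-iA)g_1=0$ almost everywhere on $D_R$. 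From this point the argument copies the proof of the equivalence (ii)$\Leftrightarrow$(iii) in Proposition~\ref{Prop.eq.r}: elliptic regularity (with $A$ smooth) makes $g_1$ smooth inside $D_R$; multiplying $(\nabla-iA)g_1=0$ by $\bar g_1$ and taking the real part gives $\nabla|g_1|^2=0$, so $\rho:=|g_1|$ is a \emph{positive} constant on the connected open set $D_R$ and $g_1$ is nowhere zero; since $D_R$ is simply connected we may write $g_1=\rho\,\Euler^{if}$ with a smooth real-valued $f$, and substituting back forces $\nabla f=A$ on $D_R$. Therefore $B_{jk}=A_{k,j}-A_{j,k}=f_{,jk}-f_{,kj}=0$ on $D_R$, as claimed.

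I do not expect a genuine obstacle. The one slightly delicate point is the legitimacy of the phase representation $g_1=\rho\,\Euler^{if}$, which needs the minimizer to be smooth and non-vanishing and needs $D_R$ to be simply connected --- all of which are dealt with exactly as in Proposition~\ref{Prop.eq.r}. The real input is the attainment of the infimum in~\eqref{lambda}, and that has already been secured via the ``two-dimensional'' form~\eqref{form.Weidl} and the compactness of $H^1(D_R)\hookrightarrow\sii(D_R)$; once that is granted, the remainder is a short flatness computation.
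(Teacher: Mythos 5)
Your argument is correct and follows essentially the same route as the paper: attainment of the infimum by a non-trivial eigenfunction $g_1$ of $T_B^R$, constancy of $|g_1|$ (the paper gets this via the diamagnetic inequality, you get it by multiplying $(\nabla-iA)g_1=0$ by $\bar g_1$ --- equivalent), the phase representation $g_1=\rho\,\Euler^{if}$ forcing $\nabla f=A$, and hence $B=\der A=0$ on each ball. The only small imprecision is that for $d\geq 3$ the weight $|x|^{-(d-2)}$ is singular at the origin, so elliptic regularity gives smoothness of $g_1$ only on $\overline{D_R}\setminus\{0\}$ (as the paper states); since $D_R\setminus\{0\}$ is still simply connected when $d\geq 3$ and the weight is trivial when $d=2$, your phase argument goes through on the punctured ball and $B=0$ there extends to all of $D_R$ by continuity.
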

\begin{proof}
We are partially inspired by~\cite[proof of Prop.~2.1.3]{Fournais-Helffer_2009}.
If $B=0$, then we may take $A=0$ and consequently $\mu_0=0$,
with $g_1^0:=1$ being the first eigenfunction of~$T_0^R$ for any $R>0$.
Conversely, let us assume $\mu_B=0$.
For any fixed $R>0$, let~$g_1$ denote the first eigenfunction of~$T_B^R$.
By elliptic regularity theory, we know that~$g_1$ is smooth
in $\overline{D_R} \setminus \{0\}$.
The diamagnetic inequality~\eqref{diamagnetic}
and the assumption $\mu_B(R)=0$ imply
that the magnitude~$|g_1|$ is constant in~$D_R$.
We may assume $|g_1|=g_1^0=1$ and write $g_1 = \Euler^{i\varphi}$
with some real-valued function~$\varphi$ such that
$
  |\nabla\varphi| \in \sii\big(D_R,|x|^{-(d-2)} \,\der x\big)
$
which is in fact smooth in $\overline{D_R} \setminus \{0\}$.
From $t_B^R[\Euler^{i\varphi}]=0$ we then find that $\nabla\varphi = A$
in $\overline{D_R} \setminus \{0\}$.
That is, $A$~is exact and thus $B=\der A = 0$
in the punctured ball $\overline{D_R} \setminus \{0\}$.
Since this is true for any $R>0$, we conclude that $B=0$ in~$\Real^d$.
\end{proof}
\begin{Remark}
Combining Propositions~\ref{Prop.Hardy.free} and~\ref{Prop.Hardy.crucial},
we get a result reminiscent of Weidl's inequality
$$
  \forall \psi \in C_0^\infty(\Real^d) \,, \quad
  h_B[\psi] \geq c(d,A) \int_{D_2\setminus D_1} |\psi(x)|^2 \, \der x
$$
that he obtained in~\cite[Sec.~3.5]{Weidl_1999}
as a consequence of his more abstract results in~\cite{Weidl_1999a}
under the assumption that~$A$ is ``non-trivial''.
\end{Remark}

The following ultimate result is just Theorem~\ref{Thm.Hardy}.
\begin{Theorem}\label{Thm.Hardy.free}
Let $d \geq 2$.
Suppose that~$B$ is smooth and closed.
If $B\not=0$, then there exists a positive constant~$c_{d,B}$
such that~\eqref{Hardy.magnet} holds.
\end{Theorem}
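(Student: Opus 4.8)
The plan is to upgrade the local Hardy inequality of Proposition~\ref{Prop.Hardy.free} to the global inequality~\eqref{Hardy.magnet} by exactly the same cut-off and interpolation scheme already used in the proof of Theorem~\ref{Thm.Hardy.Poincare}, with the only difference that the positive lower bound on a spherical shell is now supplied by Propositions~\ref{Prop.Hardy.free} and~\ref{Prop.Hardy.crucial} rather than by the (possibly vanishing) quantity $\nu_B$. The key point is that under the \emph{minimal} assumption $B \neq 0$ one still has access to a robust localised coercivity estimate, and this is precisely what Proposition~\ref{Prop.Hardy.crucial} guarantees: since $B \neq 0$, there exists some $R>0$ with $\mu_B(R) > 0$ (indeed $\mu_B \not\equiv 0$), hence by Proposition~\ref{Prop.Hardy.free}
\begin{equation*}
  \forall \psi \in C_0^\infty(\Real^d) \,, \quad
  h_B[\psi]
  \geq
  \mu_B(R)
  \int_{D_R} |\psi(x)|^2 \, \der x
  \,.
\end{equation*}
This plays the role of estimate~\eqref{step1}, with $D_R$ in place of the spherical shell.

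Next I would reproduce the second ingredient, the analogue of~\eqref{step2}, which does \emph{not} use the magnetic field at all: by the diamagnetic inequality~\eqref{diamagnetic} one has $h_B[\psi] \geq h_0[|\psi|]$, and $h_0$ in spherical coordinates after the Hardy transform $f := r^{(d-2)/2}\phi$ reduces to $\int |f_{,r}|^2 \, r \, \der\sigma\,\der r$ as in~\eqref{Hardy.transform}. Choosing $r_0$ to be, say, $R/2$ (or any interior point of the ball), introducing a cut-off $\xi \in C_0^\infty(\Real\setminus\{r_0\})$ equal to $1$ outside a neighbourhood of $r_0$ and applying Lemma~\ref{Lem.aux} exactly as before, one obtains
\begin{equation*}
  h_B[\psi]
  \geq \frac{\gamma}{4} \int_{\Real^d}
  \frac{|\psi(x)|^2}{1+|x|^2\log^2(|x|/r_0)} \, \der x
  - C \int_{D_{2r_0}} |\psi(x)|^2 \, \der x
\end{equation*}
for a suitable constant $C>0$ (one must take the cut-off interval around $r_0$ small enough that it sits inside $D_R$, so the negative term is controlled by the mass in $D_R$). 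Finally, forming the convex combination $(1-\eps)$ times the first estimate plus $\eps$ times the second and choosing $\eps$ so the coefficient of $\int_{D_R}|\psi|^2$ vanishes yields~\eqref{Hardy.magnet} with an explicit positive constant $c_{d,B}$ depending on $\mu_B(R)$, $\gamma$, $C$ and $r_0$, after comparing $1+|x|^2\log^2(|x|/r_0)$ with $1+|x|^2\log^2|x|$ as in Theorem~\ref{Thm.Hardy.Poincare}.

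The only genuinely non-routine input is Proposition~\ref{Prop.Hardy.crucial}, which has already been established; given it, the present proof is a mechanical adaptation of the argument for Theorem~\ref{Thm.Hardy.Poincare}, with the shell $\{|x|\in I\}$ replaced by the ball $D_R$. The one mild subtlety to watch is the placement of the cut-off point $r_0$ and the support of $\xi$: they must be arranged so that the shell where $\xi \neq 1$ is contained in $D_R$, ensuring that the error term from the logarithmic Hardy inequality is absorbed by the \emph{same} ball on which Proposition~\ref{Prop.Hardy.crucial} provides positivity, and so that the factor $\log^2(|x|/r_0)$ can be compared with $\log^2|x|$ uniformly. No step presents a real obstacle, so I would simply remark that the proof follows the lines of Theorem~\ref{Thm.Hardy.Poincare} verbatim, using Propositions~\ref{Prop.Hardy.free} and~\ref{Prop.Hardy.crucial} in place of Proposition~\ref{Prop.Hardy.Poincare}, and leave the details to the reader.
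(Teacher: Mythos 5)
Your proposal is correct and follows essentially the same route as the paper: the authors likewise combine Propositions~\ref{Prop.Hardy.free} and~\ref{Prop.Hardy.crucial} to get $\mu_B(R)>0$ for some $R$, observe that this yields~\eqref{step1} with $I=(0,R)$, and then repeat the cut-off/interpolation argument of Theorem~\ref{Thm.Hardy.Poincare} with $r_0=R/2$, arriving at the same kind of explicit lower bound for $c_{d,B}$ in terms of $\mu_B(R)$, $\gamma$ and the comparison of $\log^2(|x|)$ with $\log^2(2|x|/R)$. The only cosmetic difference is that you derive the radial estimate via the diamagnetic inequality rather than by discarding the non-negative angular term in~\eqref{form.spherical}, which is immaterial.
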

\begin{proof}
Once we have~\eqref{Hardy.free} with a positive $\mu_B(R)>0$
(\cf~Proposition~\ref{Prop.Hardy.crucial}),
we have in particular~\eqref{step1} with $I=(0,R)$
and the global Hardy inequality
follows by mimicking the rest of the proof of Theorem~\ref{Thm.Hardy.Poincare}.
In particular, we obtain
$$
  c_{d,B} \geq
  \frac{\displaystyle \frac{\gamma}{4} \, \mu_B(R)}
  {\displaystyle \mu_B(R)+\|\xi'\|_\infty^2+\frac{\gamma}{2}}
  \ \inf_{r \in (0,\infty)} \frac{1+r^2\log^2(r)}{1+r^2\log^2(2r/R)}
  > 0
  \,,
$$
where~$\gamma$ is the constant from Lemma~\ref{Lem.aux}.
\end{proof}
\begin{Remark}
Because of the meaning of~$\mu_B(R)$,
let us mention that the lowest eigenvalue of
the magnetic Neumann Laplacian in domains has been
extensively studied in connection with superconductivity,
(see, \eg, \cite{Helffer-Morame_2001},
\cite{Fournais-Helffer_2006} and \cite{Fournais-Helffer_2007}).
\end{Remark}

%

%--------------------------%
\section{The heat equation}\label{Sec.heat}
%--------------------------%
%
In this section we reduce the proof of Theorem~\ref{Thm.main}
to a spectral analysis of a family of operators.

\subsection{The physical variables}
Since~$H_B$ is a self-adjoint operator (\cf~Section~\ref{Sec.operator}),
the semigroup~\eqref{semigroup} can be constructed by means of
the functional calculus.
Another possibility is to apply the standard semigroup theory;
according to the Hille-Yosida theorem~\cite[Thm.~7.7]{Brezis_new},
for any initial datum $u_0\in \sii(\Real^d)$,
there exists a unique solution
\begin{equation}\label{sol.smooth}
  u\in C^0\big([0, \infty); L^2(\Real^d)\big)
  \cap C^1\big((0, \infty); L^2(\Real^d)\big)
  \cap C^0\big((0,\infty); \Dom(H_B)\big)
 \end{equation}
of the evolution problem~\eqref{Cauchy}.
In particular,
\begin{equation}\label{space-valued}
  u\in L_\mathrm{loc}^2\big((0, \infty); \Dom(h_B)\big)
  \qquad \mbox{and} \qquad
  u_{,t}\in L_\mathrm{loc}^2\big((0, \infty); \Dom(h_B)^*\big)
  \,.
\end{equation}
With an abuse of notation, we denote by the same symbol~$u$
both the function of the space-time variables
$(x,t) \in \Real^d \times (0,\infty)$
and the Hilbert-space-valued mapping $u:(0,\infty) \to \sii(\Real^d)$.
In particular, we identify the partial derivative~$u_{,t}$
with respect to the time variable
with the Hilbert-space weak derivative~$u'$.

Later on we shall transfer~\eqref{Cauchy}
to a non-autonomous evolution problem,
to which the standard semigroup theory does not apply
and variational tools have to be used instead.
Let us therefore formulate already~\eqref{Cauchy} in this setting.
We say that the Hilbert-space-valued function~$u$
satisfying~\eqref{space-valued} is a \emph{weak solution} of~\eqref{Cauchy}
if
\begin{equation}\label{weaksol}
  \tensor[_{\Dom(h_B)}]{\big\langle \phi,
  u_{,t}(t)\big\rangle}{_{\Dom(h_B)^*}}
  +h_B\big(\phi, u(t)\big)=0
  \,,
\end{equation}
for each $\phi\in \Dom(h_B)$ and a.e.\ $t\in [0, \infty)$,
and $u(0)=u_0 \in \sii(\Real^d)$.
Here $h_B(\cdot, \cdot)$ denotes the sesquilinear form
associated to $h_B[\cdot]$ and
$
  \tensor[_{\Dom(h_B)}]{\langle \cdot, \cdot \rangle}{_{\Dom(h_B)^*}}
$
stands for the duality pairing of~$\Dom(h_B)$ and~$\Dom(h_B)^*$.
The existence and uniqueness of the weak solution~$u$ of~\eqref{weaksol}
follows by an abstract theorem of J.~L.~Lions~\cite[Thm.~10.9]{Brezis_new}.

Now we would like to restrict the initial data~$u_0$ of~\eqref{Cauchy}
to the weighted space $\sii_w(\Real^d)$ introduced in~\eqref{weighted}.
It corresponds to the weak formulation
\begin{equation}\label{weaksol.weighted}
  \tensor[_{\Dom(h_B^w)}]{\big\langle \phi,
  u_{,t}(t)\big\rangle}{_{\Dom(h_B^w)^*}}
  +h_B^w\big(\phi, u(t)\big)=0
  \,,
\end{equation}
where~$h_B^w(\cdot,\cdot)$ is the sesquilinear form associated
with the quadratic form
$$
  h_B^w[\psi] :=
  \big\|(\nabla-iA)\psi\big\|_{\sii_w(\Real^d)}^2
  - c_d \left\| \frac{\psi}{|x|} \right\|_{\sii_w(\Real^d)}^2
  \,, \qquad
  \Dom(h_B^w) := \overline{C_0^\infty(\Real^d)}^{\|\cdot\|_{h_B^w}}
  \,.
$$
Here the norm with respect to which the closure
is taken is defined in analogy with~$\eqref{norm}$.
We remark that~$h_B^w$ is non-negative.
Indeed, employing the trick~\eqref{trick.Weidl}, we get
$$
  h_B^w[\mathcal{V}^{-1}g]
  = \big\|(\nabla-iA)g\big\|_{\sii_{w\eta}(\Real^d)}^2
  + \frac{d-2}{4} \left\|g\right\|_{\sii_{w\eta}(\Real^d)}^2
$$
for every $g \in C_0^\infty(\Real^d \setminus\{0\})$,
where $\eta(x):=|x|^{-(d-2)}$.
Applying~\cite[Thm.~10.9]{Brezis_new},
we get that~\eqref{weaksol.weighted} possesses
a unique solution~$u$ satisfying
\begin{equation}\label{sol.smooth.weight.orginal}
  u\in L_{\mathrm{loc}}^2\big((0, \infty); \Dom(h_B^w)\big)
  \cap C^0\big([0,\infty);\sii_w(\Real^d)\big)
  \qquad \mbox{and} \qquad
  u_{,t}\in L_{\mathrm{loc}}^2\big((0, \infty); \Dom(h_B^w)^*\big)
  \,.
\end{equation}

\subsection{The self-similarity variables}
The difficulty in the study of the large-time behaviour
of the semigroup~\eqref{semigroup} is mainly due to
the lack of compactness of the resolvent of its generator~$H_B$.
To recover the compactness in the weighted Hilbert space~\eqref{weighted},
we apply the powerful method of self-similar variables,
which can be considered as a by now classical approach
to this type of problems (\cf~Introduction).

If $(x,t) \in \Real^d \times (0,\infty)$ are the initial	
space-time variables for the heat equation~\eqref{Cauchy},
we introduce the \emph{self-similar variables}
$(y,s) \in \Real^d \times (0,\infty)$ by
\begin{equation}\label{relationship}
  y := (t+1)^{-1/2} \, x
  \,, \qquad
  s := \log(t+1)
  \,.
\end{equation}
The angular variable~$\sigma$ of the spherical coordinates
is not changed by this transformation and for the radial one
we use the notation
\begin{equation}\label{relationship.spherical}
  \rho := |y| = (t+1)^{-1/2} \, |x| =  (t+1)^{-1/2} \, r
  \,.
\end{equation}

If~$u$ is a solution of~\eqref{Cauchy},
we then define a new function
\begin{equation}\label{SST}
  \tilde{u}(y, s) :=
  \Euler^{sd/4} \, u\big(\Euler^{s/2}y, \Euler^s-1\big)
  \,.
\end{equation}
The inverse transform is given by
\begin{equation}\label{ISST}
  u(x, t)=(t+1)^{-d/4} \, \tilde{u}\big((t+1)^{-1/2}x, \log(t+1)\big)
  \,.
\end{equation}
It is straightforward to check that~$\tilde{u}$
satisfies a weak formulation of the Cauchy problem
\begin{equation}\label{CauchySST}
\left\{
\begin{aligned}
  \tilde{u}_{,s}
  + \big(-i\nabla_{\!y} -A_s(y)\big)^2 \tilde{u}
  -\frac{c_d}{|y|^2}\tilde{u}
  -\frac{1}{2} \, y \cdot \nabla_{\!y} \tilde{u}
  -\frac{d}{4} \, \tilde{u}
  &=0
  \,,
  &(y, s)&\in \Real^d\times (0,\infty)
  \,,
  \\
  \tilde{u}(y, 0)&=u_0(y)
  \,,
  &y&\in \Real^d
  \,,
\end{aligned}
\right.
\end{equation}
with the new, $s$-dependent magnetic potential
\begin{equation}\label{scalevector}
  A_s(y) := \Euler^{s/2} A(\Euler^{s/2}y)
  \,.
\end{equation}
When evolution is posed in that context,
$y$~plays the role of the new space variable and~$s$ is the new time.
However, note that now we deal with a non-autonomous system
because of the presence of magnetic field.
\begin{Remark}
The same non-autonomous feature occurs and has been previously analysed
in the case of non-trivial geometries \cite{KZ1,KZ2,KKolb}
and also for a convection-diffusion equation in the whole space
but with a variable diffusion coefficient
\cite{Escobedo-Zuazua_1991,Duro-Zuazua_1999}.
A careful analysis of the behaviour of the underlying elliptic operators
as~$s$ tends to infinity leads to a sharp decay rate for its solutions.
\end{Remark}

To be more specific, the weak formulation of~\eqref{CauchySST}
is just the transformed version of~\eqref{weaksol} that reads
\begin{equation}\label{weak.SST}
  \tensor[_{\Dom(a_s)}]{\left\langle \phi,
  \tilde{u}_{,s}(s)
  - \frac{1}{2} \, y \cdot \nabla_{\!y} \tilde{u}(s)
  - \frac{d}{4} \tilde{u}(s)
  \right\rangle}{_{\Dom(a_s)^*}}
  +a_s\big(\phi, \tilde{u}(s)\big)=0
  \,,
\end{equation}
for each $\phi\in \Dom(a_s)$ and a.e.\ $s\in [0, \infty)$, and $u(0)=u_0$,
with the quadratic form
$$
\begin{aligned}
  a_s[\psi] :=\ &
  \big\|(\nabla-iA_s)\psi\big\|_{\sii(\Real^d)}^2
  - c_d \left\|\frac{\psi}{|y|}\right\|_{\sii(\Real^d)}^2
  \,, \qquad
  %\\
  \Dom(a_s) := \ & \overline{C_0^\infty(\Real^d)}^{\|\cdot\|_{a_s}}
  \,.
\end{aligned}
$$
Here the norm~$\|\cdot\|_{a_s}$ is defined in analogy of~\eqref{norm}.
Recall that the form~$a_s$ is non-negative
due to~\eqref{diamagnetic} and~\eqref{Hardy}.

\subsection{Restricting the initial data to the weighted space}
The self-similarity transform $u \mapsto \tilde{u}$
acts as a unitary transform in $\sii(\Real^d)$;
indeed, we have
\begin{equation}\label{preserve}
  \|u(t)\|_{\sii(\Real^d)} = \|\tilde{u}(s)\|_{\sii(\Real^d)}
\end{equation}
for all $s,t \in (0,\infty)$.
This means that we can analyse the asymptotic time behaviour
of the former by studying the latter.

Because of the presence of the diffusion term in~\eqref{weak.SST}, however,
the natural space to study the evolution is not $\sii(\Real^d)$
but rather the weighted space $\sii_w(\Real^d)$
introduced in~\eqref{weighted}.
We thus define an additional transform
\begin{equation}\label{noweight}
  \tilde{v}(y,s) := w(y)^{1/2} \, \tilde{u}(y,s)
  \,,
\end{equation}
where the Gaussian weight~$w$ is defined in~\eqref{weighted}.
It casts~\eqref{CauchySST} \emph{formally} to
\begin{equation}\label{Cauchy.noweight}
\left\{
\begin{aligned}
  \tilde{v}_{,s}
  +  \big(-i\nabla_{\!y} -A_s(y)\big)^2 \tilde{v}
  -\frac{c_d}{|y|^2}\tilde{v}
  + \frac{|y|^2}{16} \tilde{v}
  - \frac{1}{2} i y \cdot A_s \, \tilde{v}
  &=0
  \,,
  &(y, s)&\in \Real^d\times (0,\infty)
  \,,
  \\
  \tilde{v}(y, 0)&= v_0(y)
  \,,
  &y&\in \Real^d
  \,,
\end{aligned}
\right.
\end{equation}
where $v_0 := w^{1/2} u_0$.
Hence, looking for solutions of~\eqref{CauchySST}
with an initial datum $u_0 \in \sii_w(\Real^d)$ is the same
as looking for solutions of~\eqref{Cauchy.noweight}
with the initial datum $v_0 \in \sii(\Real^d)$.
The advantage of the weighted setting is that
the presence of the harmonic-oscillator potential in~\eqref{Cauchy.noweight}
is responsible for the compactness of the resolvent
of the underlying elliptic operator.

Notice that the non-symmetric term on the first line of~\eqref{Cauchy.noweight}
vanishes provided that we choose~$A$ according to the Poincar\'e
gauge~\eqref{Poincare}.
From now on, we thus assume that~$A$ is given by~\eqref{Biot-Savart},
where the coefficients of the tensor~$B$ are smooth functions.

\subsection{Justifying the formal manipulations}
To show that~\eqref{Cauchy.noweight} is well posed,
we multiply the first line of~\eqref{Cauchy.noweight}
by an arbitrary test function $\phi \in C_0^\infty(\Real^d)$
and integrate over $y \in \Real^d$.
Then we \emph{formally} arrive at the identity
\begin{equation}\label{weakformweight}
  \tensor[_{\Dom(l_s)}]{\big\langle \phi, \tilde{v}_{,s}(s) \big\rangle}
  {_{\Dom(l_s)^*}}
  + l_s\big(\phi, \tilde{v}(s)\big)=0
\end{equation}
with the quadratic form
\begin{equation}\label{l-form}
\begin{aligned}
  l_s[\psi] :=
  \big\|(\nabla-iA_s)\psi\big\|_{\sii(\Real^d)}^2
  - c_d \left\| \frac{\psi}{|y|} \right\|_{\sii(\Real^d)}^2
  + \left\|\frac{|y|}{4}\psi\right\|_{\sii(\Real^d)}^2
  \!,
  %\\
  \quad
  \Dom(l_s) := \ & \overline{C_0^\infty(\Real^d)}^{\|\cdot\|_{l_s}}
  . \!\!
\end{aligned}
\end{equation}
Here the norm with respect to which the closure is taken
is defined in analogy with~\eqref{norm}.
Note that~$l_s$ is non-negative due to
the diamagnetic inequality~\eqref{diamagnetic}
and the classical Hardy inequality~\eqref{Hardy}.

We also introduce the analogous form in the absence
of magnetic field (and thus $s$-in\-de\-pend\-ent)
\begin{equation}\label{l0-form}
\begin{aligned}
  l[\psi] :=
  \big\|\nabla\psi\big\|_{\sii(\Real^d)}^2
  - c_d \left\|\frac{\psi}{|y|}\right\|_{\sii(\Real^d)}^2
  + \left\|\frac{|y|}{4}\psi\right\|_{\sii(\Real^d)}^2
  \,,
  \qquad
  \Dom(l) := \ & \overline{C_0^\infty(\Real^d)}^{\|\cdot\|_{l}}
  \,.
\end{aligned}
\end{equation}
The following important result shows that the form domain~$\Dom(l_s)$
is locally independent of~$s$, provided that~$A$ is bounded.
\begin{Lemma}\label{Lem.norm.eq}
Let $d \geq 2$.
Suppose that~$A$ is bounded.
For any $s \geq 0$, there exists a positive constant~$C_s$
such that, for every $\psi \in C_0^\infty(\Real^d)$,
$$
  C_s^{-1} \|\psi\|_{l}  \leq \|\psi\|_{l_s} \leq C_s \|\psi\|_{l}
  \,.
$$
\end{Lemma}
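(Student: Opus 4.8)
The plan is to strip off the inverse-square potential by Weidl's unitary transform $\mathcal{V}$ of~\eqref{unitary.Weidl} and thereby reduce the comparison of $l_s$ and $l$ to the comparison of two manifestly non-negative forms differing only through a \emph{bounded} magnetic perturbation of the kinetic term. Fix $\psi\in C_0^\infty(\Real^d)$ and put $g:=|y|^{(d-2)/2}\psi$ (so $g=\psi$ if $d=2$, since then $c_2=0$ and $\mathcal{V}$ is the identity). First I would verify that, writing $\der\mu(y):=|y|^{-(d-2)}\,\der y$,
\begin{equation*}
  l_s[\psi]=\int_{\Real^d}\Big(\big|(\nabla-iA_s)g\big|^2+\tfrac{1}{16}|y|^2|g|^2\Big)\,\der\mu,
  \qquad
  l[\psi]=\int_{\Real^d}\Big(|\nabla g|^2+\tfrac{1}{16}|y|^2|g|^2\Big)\,\der\mu,
\end{equation*}
and $\|\psi\|_{\sii(\Real^d)}^2=\int_{\Real^d}|g|^2\,\der\mu$. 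These identities are obtained from the pointwise relation~\eqref{trick.Weidl} (with $A$ replaced by $A_s$, which is again in the Poincar\'e gauge, and with the identity read in the self-similar variable $y$) together with the integration by parts based on $\divergence(y/|y|^d)=0$, exactly as in the derivation of~\eqref{form.Weidl}. Since here $\psi$ need not vanish near the origin, the only point to check is that the boundary integral over $\partial D_\eps$ appearing in that integration by parts tends to $0$ as $\eps\to0$; this holds because $|g(y)|^2=|y|^{d-2}|\psi(y)|^2=O(|y|^{d-2})$ near the origin, so the boundary term is $O(\eps^{d-2})$.

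With these identities at hand, I would use the boundedness of $A$: it gives $\|A_s\|_{L^\infty(\Real^d)}=\Euler^{s/2}\|A\|_{L^\infty(\Real^d)}=:M_s<\infty$. Applying the elementary bound $|v-w|^2\le 2|v|^2+2|w|^2$ pointwise, once with $v=\nabla g,\ w=iA_sg$ and once with $v=(\nabla-iA_s)g,\ w=-iA_sg$, yields
\begin{equation*}
  \big|(\nabla-iA_s)g\big|^2\le 2|\nabla g|^2+2M_s^2|g|^2,
  \qquad
  |\nabla g|^2\le 2\big|(\nabla-iA_s)g\big|^2+2M_s^2|g|^2.
\end{equation*}
Integrating the first inequality against $\der\mu$ and enlarging the harmonic term by a factor $2$ gives $l_s[\psi]\le 2\,l[\psi]+2M_s^2\|\psi\|_{\sii(\Real^d)}^2$, whence $\|\psi\|_{l_s}^2\le 2\,l[\psi]+(2M_s^2+1)\|\psi\|_{\sii(\Real^d)}^2\le(2M_s^2+2)\|\psi\|_{l}^2$. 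The reverse estimate is obtained in exactly the same way from the second pointwise inequality. This proves the lemma with $C_s:=\sqrt{2M_s^2+2}$; note that $C_s$ genuinely depends on $s$, which reflects the non-autonomous character of the transformed evolution. Taking closures one also concludes $\Dom(l_s)=\Dom(l)$ for every $s\ge 0$.

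The only genuinely non-trivial step is the reduction in the first paragraph — one \emph{has} to conjugate by $\mathcal{V}$ before comparing the forms. A direct attempt would require absorbing the cross term in $l_s[\psi]-l[\psi]$ and hence controlling $\|\nabla\psi\|_{\sii(\Real^d)}^2$ by $\|\psi\|_{l}^2$, which is impossible: the classical Hardy inequality~\eqref{Hardy} is critical, so $\|\nabla\psi\|_{\sii(\Real^d)}^2-c_d\|\psi/|y|\|_{\sii(\Real^d)}^2$ contains no remainder term proportional to $\|\nabla\psi\|_{\sii(\Real^d)}^2$. After the transform the kinetic term becomes the manifestly non-negative $\int_{\Real^d}|\nabla g|^2\,\der\mu$, and everything else is completely elementary.
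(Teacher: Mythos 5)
Your proof is correct and follows essentially the same route as the paper's: conjugation by the Weidl transform $\mathcal{V}$ to reach the identity \eqref{form.Weidl} with $A_s$ in place of $A$, followed by elementary pointwise estimates using $\|A_s\|_\infty=\Euler^{s/2}\|A\|_\infty$. The only (harmless) difference is that the paper first restricts to the core $C_0^\infty(\Real^d\setminus\{0\})$ of both forms rather than checking, as you do, that the boundary term at the origin vanishes for general $\psi\in C_0^\infty(\Real^d)$.
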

\begin{proof}
As in Lemma~\ref{Lem.core}, it is possible to show that
the space $C_0^\infty(\Real^d \setminus\{0\})$ is a core
of both~$l_s$ and~$l$.
It is thus enough to prove the inequalities for
any fixed $\psi \in C_0^\infty(\Real^d \setminus\{0\})$.
Performing the substitution $g := \mathcal{V}\psi$,
where~$\mathcal{V}$ is given by~\eqref{unitary.Weidl},
we already know about the following identity
(\cf~\eqref{form.Weidl})
$$
  \big\|(\nabla-iA_s)\psi\big\|_{\sii(\Real^d)}^2
  - c_d \left\|\frac{\psi}{|y|}\right\|_{\sii(\Real^d)}^2
  = \int_{\Real^d}
  |(\nabla -iA_s) g(y)|^2 \, |y|^{-(d-2)}  \, \der y
  \,.
$$
Using the boundedness of~$A$ and elementary estimates,
we have
$$
  \delta |\nabla g|^2
  - \frac{\delta \, \Euler^s \|A\|_\infty^2}{1-\delta} |g|^2
  \leq |(\nabla -iA_s) g|^2 \leq
  2 |\nabla g|^2 + 2 \Euler^s \|A\|_\infty^2 |g|^2
$$
with any $\delta>0$.
Coming back to the original function~$\psi$
and choosing~$\delta$ small and $s$-dependent,
we establish the desired inequalities.
\end{proof}

As a consequence of~\eqref{A.vanish},
smooth~$A$ in the Poincar\'e gauge is bounded
under our characteristic assumption that~$B$ is compactly supported.
From now on, we thus assume that
the coefficients of~$B$ are compactly supported functions.

By ``formally'' above
we mean that it is not \emph{a priori} clear
that the solution~$\tilde{v}(s)$ and its derivative~$\tilde{v}_{,s}(s)$
belong to~$\Dom(l_s)$ and the dual~$\Dom(l_s)^*$, respectively,
so that the result~\eqref{weakformweight} of the formal manipulations
might be meaningless.
We therefore proceed conversely by showing first that~\eqref{Cauchy.noweight}
is actually well posed in $\sii(\Real^d)$.
\begin{Proposition}\label{Prop.Lions}
Let $d \geq 2$. Suppose that~$B$ is smooth, closed and compactly supported.
Choose~$A$ according to the gauge~\eqref{Biot-Savart}.
If $v_0 \in \sii(\Real^d)$, then the Cauchy problem~\eqref{Cauchy.noweight}
admits a unique weak solution
\begin{equation}\label{sol.smooth.weight}
  \tilde{v}\in L_{\mathrm{loc}}^2\big((0, \infty); \Dom(l)\big)
  \cap C^0\big([0,\infty);\sii(\Real^d)\big)
  \qquad \mbox{and} \qquad
  \tilde{v}_{,s}\in L_{\mathrm{loc}}^2\big((0, \infty); \Dom(l)^*\big)
  \,.
\end{equation}
More specifically,
there is such a unique solution of the problem
\begin{equation}\label{weak.form.weight}
  \tensor[_{\Dom(l)}]{\big\langle \phi, \tilde{v}_{,s}(s) \big\rangle}
  {_{\Dom(l)^*}}
  + l_s\big(\phi, \tilde{v}(s)\big)=0
\end{equation}
for each $\phi \in \Dom(l)$ and a.e.\ $s \in [0,\infty)$
satisfying $\tilde{v}(0)=v_0$.
\end{Proposition}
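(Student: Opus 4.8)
The plan is to apply the abstract existence--uniqueness theorem of J.~L.~Lions \cite[Thm.~10.9]{Brezis_new} to the non-autonomous family of sesquilinear forms $l_s(\cdot,\cdot)$ on a suitable Gelfand triple, working on each bounded time interval $[0,T]$, and then to patch the resulting solutions together using uniqueness.

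First I would set up the Gelfand triple $\Dom(l) \hookrightarrow \sii(\Real^d) \hookrightarrow \Dom(l)^*$. Here $\Dom(l)$ is a separable Hilbert space which is densely and continuously embedded in $\sii(\Real^d)$, because $\|\psi\|_{\sii(\Real^d)} \le \|\psi\|_l$ and $C_0^\infty(\Real^d)$ is dense in $\sii(\Real^d)$. Since~$B$ is compactly supported, the Poincar\'e-gauge potential~$A$ given by~\eqref{Biot-Savart} is bounded by~\eqref{A.vanish}, so Lemma~\ref{Lem.norm.eq} applies; in particular the norms $\|\cdot\|_{l_s}$ and $\|\cdot\|_l$ are equivalent, whence $\Dom(l_s)=\Dom(l)$ as vector spaces for every $s\ge 0$. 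This makes the weak formulation~\eqref{weak.form.weight}, with test functions $\phi\in\Dom(l)$, meaningful, and by density it is equivalent to the formulation obtained by testing against $\phi\in C_0^\infty(\Real^d)$.

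Next I would verify the hypotheses of Lions' theorem on a fixed interval $[0,T]$. Measurability of $s\mapsto l_s(\phi,\psi)$ for fixed $\phi,\psi\in\Dom(l)$ is immediate; indeed $s\mapsto l_s(\phi,\psi)$ is continuous because $A_s$ depends smoothly on~$s$ through~\eqref{scalevector}. For the remaining two conditions I would exploit Lemma~\ref{Lem.norm.eq}, noting that the constant $C_s$ there can be taken monotone in~$s$ (it is assembled from $\Euler^s$ and $\|A\|_\infty$), so $C_s\le C_T$ for $s\in[0,T]$. Since each $l_s$ is non-negative, the Cauchy--Schwarz inequality gives the uniform boundedness $|l_s(\phi,\psi)| \le \|\phi\|_{l_s}\|\psi\|_{l_s} \le C_T^2\,\|\phi\|_l\|\psi\|_l$, while the lower bound of Lemma~\ref{Lem.norm.eq}, $\|\psi\|_l^2 \le C_s^2\big(l_s[\psi]+\|\psi\|_{\sii(\Real^d)}^2\big)$, yields the G\aa rding inequality $\mathrm{Re}\,l_s(\psi,\psi)=l_s[\psi]\ge C_T^{-2}\|\psi\|_l^2 - \|\psi\|_{\sii(\Real^d)}^2$ for $s\in[0,T]$. (If the version of Lions' theorem invoked demands strict coercivity, one first passes to $\Euler^{-s}\tilde v$, which solves the analogous problem with the form $l_s$ replaced by a strictly coercive one.)

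With these verifications in hand, Lions' theorem produces, for every $T>0$, a unique $\tilde v\in L^2\big((0,T);\Dom(l)\big)$ with $\tilde v_{,s}\in L^2\big((0,T);\Dom(l)^*\big)$ satisfying~\eqref{weak.form.weight} for a.e.\ $s\in(0,T)$ and $\tilde v(0)=v_0$; the standard embedding then gives $\tilde v\in C^0\big([0,T];\sii(\Real^d)\big)$, so the initial condition is well defined. Finally, by the uniqueness part of the theorem, the restriction to $[0,T]$ of the solution on a larger interval $[0,T']$ coincides with the solution on $[0,T]$, so the family of solutions glues into a single function on $[0,\infty)$ enjoying the regularity stated in~\eqref{sol.smooth.weight}, and uniqueness holds globally. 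I expect the only genuinely delicate point to be the bookkeeping showing that the constants supplied by Lemma~\ref{Lem.norm.eq} are uniform on each compact time interval; the rest is a routine instantiation of the Lions framework, exactly as was done for~\eqref{weaksol} earlier in the paper.
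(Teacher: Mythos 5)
Your proposal is correct and follows essentially the same route as the paper: the Gelfand triple $\Dom(l)\subset \sii(\Real^d)\subset \Dom(l)^*$, verification of measurability, local uniform boundedness and the G\aa rding-type coercivity of $l_s$ via Lemma~\ref{Lem.norm.eq} (using boundedness of the Poincar\'e-gauge potential from~\eqref{A.vanish}), followed by an application of Lions' theorem on each finite interval and globalisation by arbitrariness of the endpoint. The only cosmetic difference is that the paper simply takes $\sup_{s\in[0,s_0]}C_s^2$ and $\inf_{s\in[0,s_0]}C_s^{-2}$ rather than arguing monotonicity of $C_s$, and it does not need your exponential-shift remark since the invoked version of Lions' theorem accepts the G\aa rding inequality directly.
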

\begin{proof}
The existence of the weak solutions
follows by the theorem of J.~L.~Lions'~\cite[Thm.~10.9]{Brezis_new}
applied in the scale of Hilbert spaces
$
  \Dom(l) \subset \sii(\Real^d) \subset \Dom(l)^*
$
with help of Lemma~\ref{Lem.norm.eq}.
We refer to \cite[Prop.~5.1]{KZ1}
for the justification in an analogous situation
and leave the details to the reader.
\end{proof}

Now we are in a position to give a partial equivalence
of~\eqref{Cauchy} and~\eqref{Cauchy.noweight},
when the initial data of the former are restricted to
the weighted space~\eqref{weighted}.
\begin{Proposition}\label{Prop.justify}
Suppose the hypotheses of Proposition~\ref{Prop.Lions}.
If~$u$ satisfying~\eqref{sol.smooth.weight.orginal}
is a solution of~\eqref{weaksol.weighted} for each $\phi \in \Dom(h_B^w)$
and a.e.\ $t\in[0,\infty)$,
subject to the initial condition $u(0)=u_0 \in \sii_w(\Real^d)$,
then~$\tilde{v}$ defined in~\eqref{noweight} and~\eqref{SST}
is the solution of~\eqref{weak.form.weight} for each $\phi \in \Dom(a)$
and a.e.\ $s\in[0,\infty)$,
subject to the initial condition $\tilde{v}(0) = w^{1/2} u_0$,
and satisfies~\eqref{sol.smooth.weight}.
\end{Proposition}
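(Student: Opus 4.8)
The plan is to avoid differentiating~$u$ directly, or multiplying it by the unbounded weight~$w^{1/2}$, and instead to combine the well-posedness already obtained in Proposition~\ref{Prop.Lions} with the uniqueness of the weak solution of~\eqref{weaksol.weighted}. Set $v_0 := w^{1/2} u_0$; since $u_0 \in \sii_w(\Real^d)$, we have $v_0 \in \sii(\Real^d)$, so Proposition~\ref{Prop.Lions} yields a unique weak solution~$\tilde{v}$ of~\eqref{weak.form.weight} with $\tilde{v}(0)=v_0$ and the regularity~\eqref{sol.smooth.weight}. The whole task then reduces to identifying this~$\tilde{v}$ with the function built from~$u$ through the transforms~\eqref{SST} and~\eqref{noweight}.

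To this end I would invert the changes of variables: put $\tilde{u} := w^{-1/2}\tilde{v}$ and define~$\hat{u}$ from~$\tilde{u}$ by the inverse self-similarity transform~\eqref{ISST}. The first step is to verify that~$\hat{u}$ enjoys the regularity~\eqref{sol.smooth.weight.orginal}. For each fixed $s>0$ the multiplication $\psi \mapsto w^{1/2}\psi$ is a linear isomorphism from $\sii(\Real^d)$ onto $\sii_w(\Real^d)$ and, thanks to the boundedness of~$A$ (recall~\eqref{A.vanish}) together with an equivalence of form norms in the spirit of Lemma~\ref{Lem.norm.eq}, from $\Dom(l)$ onto $\Dom(h_B^w)$; similarly the self-similarity scaling~\eqref{relationship} acts as a boundedly invertible map on $\sii(\Real^d)$ and on these form domains, with operator norms depending only on~$s$ and locally bounded in~$s$. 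Transporting the Bochner-space memberships in~\eqref{sol.smooth.weight} through these two isomorphisms then gives~\eqref{sol.smooth.weight.orginal} for~$\hat{u}$, together with $\hat{u}(0)=w^{-1/2}v_0=u_0$.

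Next I would check that~$\hat{u}$ satisfies the weak formulation~\eqref{weaksol.weighted}, which amounts to reversing, in the variational sense, the formal manipulations that led from~\eqref{CauchySST} through~\eqref{Cauchy.noweight} to~\eqref{weakformweight}. Given a test function $\phi \in \Dom(h_B^w)$ for~\eqref{weaksol.weighted}, its transformed counterpart --- scaled according to~\eqref{relationship} and multiplied by~$w^{1/2}$ --- belongs to $\Dom(l)$ and is admissible in~\eqref{weak.form.weight}. The algebraic heart of the matter is the classical identity that conjugation by $w^{1/2}=\Euler^{|y|^2/8}$ turns the Ornstein--Uhlenbeck generator $-\Delta-\tfrac{1}{2}\,y\cdot\nabla_{\!y}-\tfrac{d}{4}$ into the shifted harmonic oscillator $-\Delta+\tfrac{|y|^2}{16}$, so that the extra first- and zeroth-order terms of~\eqref{weak.SST} are converted into the quadratic potential of~\eqref{l-form}, while the non-symmetric term $-\tfrac{1}{2}i\,y\cdot A_s$ of~\eqref{Cauchy.noweight} drops out since, with $x=\Euler^{s/2}y$, one has $y\cdot A_s(y)=x\cdot A(x)=0$ in the Poincar\'e gauge~\eqref{Poincare}. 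Undoing the two changes of variables term by term in~\eqref{weak.form.weight}, and matching the Jacobian of~\eqref{relationship} against the measures $w(y)\,\der y$ and~$\der x$, one recovers~\eqref{weaksol.weighted} for~$\hat{u}$; a density argument passing between $C_0^\infty(\Real^d)$ and $C_0^\infty(\Real^d\setminus\{0\})$ (as in Lemma~\ref{Lem.core} and the proof of Lemma~\ref{Lem.norm.eq}) takes care of the singularities of~$|x|^{-2}$ and of~$w^{\pm1/2}$ at the origin. Finally, uniqueness of the weak solution of~\eqref{weaksol.weighted} forces $\hat{u}=u$, whence $\tilde{v}=w^{1/2}\tilde{u}$ is precisely the function in the statement.

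The step I expect to be the main obstacle is the bookkeeping of the Hilbert-space-valued regularity under the two transformations --- in particular showing that $\tilde{v}_{,s}\in L^2_{\mathrm{loc}}\big((0,\infty);\Dom(l)^*\big)$ corresponds to $u_{,t}\in L^2_{\mathrm{loc}}\big((0,\infty);\Dom(h_B^w)^*\big)$ with the duality pairings of~\eqref{weaksol.weighted} and~\eqref{weak.form.weight} matching up, and verifying that the $s$-dependence of the transformation constants does not spoil local integrability in time. The purely algebraic identities are routine, but they must be carried out on a core on which everything is smooth and the weights are harmless, so I would first establish them on $C_0^\infty(\Real^d\setminus\{0\})$ and then pass to the closure, exactly as in Lemmas~\ref{Lem.core} and~\ref{Lem.norm.eq}.
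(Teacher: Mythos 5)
Your overall strategy runs in the opposite direction to the paper's, and this is where it breaks down. The paper transforms the given~$u$ \emph{forward}: it establishes the identity
\begin{equation*}
  l_s[\tilde{v}] = \int_{\Real^d}
  \left[
  \Euler^s \Big(|(\nabla-iA)u|^2 - c_d \tfrac{|u|^2}{|x|^2}\Big)
  + \Euler^{-s} (1-\Euler^{s/2}) \tfrac{|x|^2}{8} |u|^2
  - \tfrac{d}{4} |u|^2
  \right] w(\Euler^{-s/2}x) \, \der x
\end{equation*}
and uses $w(\Euler^{-s/2}x) \leq w(x)$ to get $l_s[\tilde{v}] \leq \Euler^s h_B^w[u]$, so that $\tilde{v}(s) \in \Dom(l_s)$, the weak formulation~\eqref{weakformweight} is justified, and the uniqueness of Proposition~\ref{Prop.Lions} then identifies~$\tilde{v}$ with the unique solution of~\eqref{weak.form.weight} and gives it the regularity~\eqref{sol.smooth.weight}. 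Your route starts from that unique~$\tilde{v}$ and transforms it \emph{back}, which requires the reverse weight inequality — and that one fails.

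Concretely, the gap is your claim that the Bochner regularity~\eqref{sol.smooth.weight} can be ``transported through isomorphisms'' to yield~\eqref{sol.smooth.weight.orginal} for~$\hat{u}$. The two maps you compose are individually fine, but their composite is not bounded between the relevant spaces for $t>0$: with $x=\Euler^{s/2}y$ one has $w(x)=w(y)^{\Euler^s}$, so
\begin{equation*}
  \|\hat{u}(t)\|_{\sii_w(\Real^d)}^2
  = \int_{\Real^d} \Euler^{(\Euler^s-1)|y|^2/4}\,|\tilde{v}(y,s)|^2\,\der y
  \,,
\end{equation*}
which is not controlled by $\|\tilde{v}(s)\|_{\sii(\Real^d)}$ (and similarly $h_B^w[\hat{u}(t)]$ involves the weight $w(y)^{\Euler^s-1}$ against quantities controlled only by $\|\tilde{v}(s)\|_{l}$). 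Hence you cannot conclude that $\hat{u}$ lies in the class~\eqref{sol.smooth.weight.orginal}, which is exactly the class in which the Lions theorem guarantees uniqueness for~\eqref{weaksol.weighted}; without that membership the identification $\hat{u}=u$ by uniqueness is not justified, and your argument becomes circular (the needed super-Gaussian decay of $\tilde{v}(\cdot,s)$ is a smoothing property one would have to prove separately). The asymmetry $w(\Euler^{-s/2}x)\le w(x)$ is precisely why the forward direction is the viable one; the algebraic identities you describe (the Ornstein--Uhlenbeck conjugation, the vanishing of $y\cdot A_s$ in the Poincar\'e gauge, the core argument on $C_0^\infty(\Real^d\setminus\{0\})$) are all correct, but they must be deployed on the transform of~$u$, not on the back-transform of~$\tilde{v}$.
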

\begin{proof}
It is straightforward to establish the identity
\begin{equation*}
  l_s[\tilde{v}] = \int_{\Real^d}
  \left[
  \Euler^s \left(|(\nabla-iA)u|^2 - c_d \frac{|u|^2}{|x|^2}\right)
  + \Euler^{-s} (1-\Euler^{s/2}) \frac{|x|^2}{8} |u(x)|^2
  - \frac{d}{4} |u|^2
  \right] w(\Euler^{-s/2}x) \, \der x
  \,,
\end{equation*}
provided that $u \in \Dom(h_B^w)$.
Noticing that $\Euler^{s/2} \geq 1$ and $w(\Euler^{-s/2}x) \leq w(x)$,
we obtain
$
  l_s[\tilde{v}] \leq \Euler^s \, h_B^w[u]
$,
whence $\tilde{v} \in \Dom(l_s)$ for every $s \geq 0$.
Then~\eqref{weakformweight} makes sense and holds.
Furthermore, by Proposition~\ref{Prop.Lions},
$\tilde{v}$~is the unique solution of~\eqref{weak.form.weight}
satisfying~\eqref{sol.smooth.weight}.
\end{proof}

As a consequence of this proposition, we may study the operator
$\Euler^{-t H_B} : \sii_w(\Real^d) \to \sii_w(\Real^d) \subset \sii(\Real^d)$
by analysing the evolution problem~\eqref{Cauchy.noweight}.

\subsection{Reduction to a spectral problem}
Choosing $\phi=\tilde{v}$ in~\eqref{weak.form.weight}
and combining the obtained equation with its conjugate version,
we arrive at the identity
\begin{equation}\label{energy}
  \frac{1}{2} \frac{\der}{\der s} \|\tilde{v}(s)\|_{\sii(\Real^d)}^2
  = - l_s[\tilde{v}(s)]
\end{equation}
for every $s \geq 0$.
Now, as usual for energy estimates,
we replace the right hand side of~\eqref{energy}
by the spectral bound
\begin{equation}\label{spec.bound}
  l_s[\tilde{v}(s)] \geq \lambda_B(s) \, \|\tilde{v}(s)\|_{\sii(\Real^d)}^2
  \,,
\end{equation}
where~$\lambda_B(s)$ is the lowest point in the spectrum of
the self-adjoint operator~$L_s$ in $\sii(\Real^d)$ associated with the form~$l_s$.
Then~\eqref{energy} together with~\eqref{spec.bound} implies Gronwall's inequality
\begin{equation}\label{Gronwall}
  \|\tilde{v}(s)\|_{\sii(\Real^d)}
  \leq \|v_0\|_{\sii(\Real^d)} \
  \Euler^{-\int_0^s \lambda_B(\tau) \, \der\tau}
\end{equation}
valid for every $s \geq 0$.
Recall that~$L_s$ is non-negative for every $s \geq 0$.

In this way, the problem of large-time behaviour of~\eqref{semigroup}
is reduced to a spectral analysis
of the family of operators $\{L_s\}_{s \geq 0}$.
In particular, we have the following results.
\begin{Proposition}\label{Prop.reduction}
Suppose the hypotheses of Proposition~\ref{Prop.Lions}.
Then
\begin{enumerate}
\item[\emph{(i)}]
$
  \big\|\Euler^{-t H_B}\big\|_{\sii_w(\Real^d)\to\sii(\Real^d)}
  \leq
  (1+t)^{-\lambda_B^\mathrm{min}}
$
for every $t \geq 0$,
where $\displaystyle \lambda_B^\mathrm{min} := \inf_{s \geq 0} \lambda_B(s)$.
\item[\emph{(ii)}]
$\gamma_B \geq \lambda_B(\infty)$,
where $\displaystyle \lambda_B(\infty) := \liminf_{s \to \infty} \lambda_B(s)$.
\end{enumerate}
\end{Proposition}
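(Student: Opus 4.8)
The plan is to read both bounds off the Gronwall estimate~\eqref{Gronwall}, using only the unitarity~\eqref{preserve} of the self-similarity transform together with the elementary fact that $w \geq 1$, so that removing the weight in~\eqref{noweight} does not increase the $\sii(\Real^d)$-norm. All the substantial work is already in the earlier sections; this proposition is essentially a change of variables.

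First I would fix $u_0 \in \sii_w(\Real^d)$, put $u(t) := \Euler^{-t H_B} u_0$, and let $\tilde v$ be the function associated with $u$ through the successive transforms~\eqref{SST} and~\eqref{noweight}. By Proposition~\ref{Prop.justify}, $\tilde v$ solves the weak problem~\eqref{weak.form.weight} and satisfies $\tilde v(0) = w^{1/2} u_0$, so $\|\tilde v(0)\|_{\sii(\Real^d)} = \|u_0\|_{\sii_w(\Real^d)}$. Inserting this into~\eqref{Gronwall} gives
$$
  \|\tilde v(s)\|_{\sii(\Real^d)}
  \leq \|u_0\|_{\sii_w(\Real^d)} \; \Euler^{-\int_0^s \lambda_B(\tau)\,\der\tau}
  \,.
$$
Next, from $\tilde u = w^{-1/2}\tilde v$ and $w \geq 1$ we get $\|\tilde u(s)\|_{\sii(\Real^d)} \leq \|\tilde v(s)\|_{\sii(\Real^d)}$, and then~\eqref{preserve} with $s = \log(1+t)$ turns the left-hand side into $\|u(t)\|_{\sii(\Real^d)} = \|\Euler^{-t H_B} u_0\|_{\sii(\Real^d)}$. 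Taking the supremum over $u_0$ with $\|u_0\|_{\sii_w(\Real^d)} = 1$, I obtain the basic estimate
$$
  \big\|\Euler^{-t H_B}\big\|_{\sii_w(\Real^d)\to\sii(\Real^d)}
  \leq \Euler^{-\int_0^s \lambda_B(\tau)\,\der\tau}
  \,, \qquad s = \log(1+t)
  \,.
$$

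Part~(i) is then immediate: bound $\int_0^s \lambda_B(\tau)\,\der\tau \geq \lambda_B^\mathrm{min}\, s = \lambda_B^\mathrm{min}\log(1+t)$ and exponentiate to get $(1+t)^{-\lambda_B^\mathrm{min}}$. For part~(ii), fix any $\gamma$ with $0 < \gamma < \lambda_B(\infty)$; by definition of $\liminf$ there is $s_0 \geq 0$ with $\lambda_B(\tau) \geq \gamma$ for all $\tau \geq s_0$, and since $L_s$ (hence $\lambda_B$) is non-negative on $[0,s_0]$ one has $\int_0^s \lambda_B(\tau)\,\der\tau \geq \gamma\,(s - s_0)$ for \emph{every} $s \geq 0$. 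The basic estimate then yields $\big\|\Euler^{-t H_B}\big\|_{\sii_w(\Real^d)\to\sii(\Real^d)} \leq \Euler^{\gamma s_0}\,(1+t)^{-\gamma}$ for all $t \geq 0$, so $\gamma$ belongs to the set defining the decay rate~\eqref{rate} and $\gamma_B \geq \gamma$. Letting $\gamma \uparrow \lambda_B(\infty)$ finishes the argument.

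I do not expect a serious obstacle here. The only two points needing care are the bookkeeping of the transforms — invoking Proposition~\ref{Prop.justify} to identify $\tilde v$ correctly and to read off $\tilde v(0) = w^{1/2}u_0$ — and the sign observation that $\tilde v = w^{1/2}\tilde u$ with $w \geq 1$ cannot decrease $\sii(\Real^d)$-norms; everything else is elementary manipulation of~\eqref{Gronwall}.
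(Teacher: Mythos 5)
Your argument is correct and follows essentially the same route as the paper: the Gronwall bound~\eqref{Gronwall} combined with~\eqref{preserve}, the identity $\|\tilde v(s)\|_{\sii(\Real^d)}=\|\tilde u(s)\|_{\sii_w(\Real^d)}$ and $w\geq 1$ gives the basic operator-norm estimate, from which (i) follows by the crude bound and (ii) by splitting the time integral at $s_0$ and using the non-negativity of $\lambda_B$. Your $\gamma<\lambda_B(\infty)$ formulation is just the paper's $\lambda_B(\infty)-\eps$ in different clothing, and the edge case $\lambda_B(\infty)=0$, which the paper dispatches explicitly, is trivially covered since the defining set in~\eqref{rate} always contains $\gamma=0$.
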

\begin{proof}
Using~\eqref{preserve}, the pointwise bound $w \geq 1$,
inequality~\eqref{Gronwall}
and the identification $v_0 = w^{1/2} u_0$,
we have
$$
  \|u(t)\|_{\sii(\Real^d)}
  = \|\tilde{u}(s)\|_{\sii(\Real^d)}
  \leq \|\tilde{u}(s)\|_{\sii_w(\Real^d)}
  = \|\tilde{v}(s)\|_{\sii(\Real^d)}
  \leq \|u_0\|_{\sii_w(\Real^d)} \
  \Euler^{-\int_0^s \lambda_B(\tau) \, \der\tau}
  \,.
$$
Consequently,
$$
  \big\|\Euler^{-t H_B}\big\|_{\sii_w(\Real^d)\to\sii(\Real^d)}
  = \sup_{u_0 \in \sii_w(\Real^d) \setminus \{0\}}
  \frac{\|u(t)\|_{\sii(\Real^d)}}{\|u_0\|_{\sii_w(\Real^d)}}
  \leq
  \Euler^{-\int_0^s \lambda_B(\tau) \, \der\tau}
  \,.
$$
Then~(i) immediately follows from the crude bound
$
  \Euler^{-\int_0^s \lambda_B(\tau) \, \der\tau}
  \leq \Euler^{-\lambda_B^\mathrm{min} s}
  = (1+t)^{-\lambda_B^\mathrm{min}}
$,
where the equality is due to the relationship~\eqref{relationship}
between~$s$ and~$t$.
To prove~(ii), we refer to analogous situations in
\cite[Sec.~5.8]{KZ1}, \cite[Sec.~4.5]{KZ2}, \cite[Sec.~3.5]{K7}
or \cite[Sec.~7.10]{KKolb}.
\end{proof}

The behaviour of~$\lambda_B(s)$ on~$s$
will be studied in the following section.
Using additional results about~$L_s$,
we shall actually show that there holds
an equality in Proposition~\ref{Prop.reduction}.(ii).

%-----------------------------------------------------------------------%
\section{Schr\"odinger operators with singularly scaled magnetic field}
\label{Sec.singular}
%-----------------------------------------------------------------------%
%
In this section we eventually give a proof of Theorem~\ref{Thm.main}
by analysing the family of operators $\{L_s\}_{s \geq 0}$.
Recall that, for each fixed $s \geq 0$,
$L_s$~is the self-adjoint operator in $\sii(\Real^d)$
associated with the sesquilinear form~\eqref{l-form}.
On its domain the operator acts as
\begin{equation}\label{Schrodinger.scaled}
  L_s = \big(-i\nabla_{\!y}-A_s(y)\big)^2 - \frac{c_d}{|y|^2}
  + \frac{|y|^2}{16}
  \,,
\end{equation}
where~$A_s$ is the singularly scaled magnetic potential~\eqref{scalevector}.
We shall be particularly interested in the asymptotic behaviour
of~$L_s$ as $s \to \infty$.
Throughout this section, we assume that~$A$ is smooth and bounded.

\subsection{Basic properties}
First of all, we remark that, because of the presence
of the unbounded harmonic-oscillator potential in~\eqref{Schrodinger.scaled},
we have the following important result.
\begin{Proposition}\label{Prop.compact}
Let $d \geq 2$. Suppose that~$A$ is bounded.
Then~$L_s$ is an operator with compact resolvent for any $s \geq 0$.
\end{Proposition}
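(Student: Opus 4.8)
The plan is to reduce the statement, by two reductions, to the resolvent compactness of an explicit magnetic-free operator which decomposes into one-dimensional channels. The first reduction removes the magnetic field. Since $A$ is bounded, the scaled potential $A_s(y)=\Euler^{s/2}A(\Euler^{s/2}y)$ is bounded for each fixed $s\geq0$, so Lemma~\ref{Lem.norm.eq} applies and yields $\Dom(l_s)=\Dom(l)$ with equivalent norms, where $l$ denotes the magnetic-free form~\eqref{l0-form}. Since $L_s\geq0$, for $f\in\sii(\Real^d)$ one has $\|(L_s+1)^{-1}f\|_{l_s}^2=\langle(L_s+1)^{-1}f,f\rangle\leq\|f\|_{\sii(\Real^d)}^2$, so $(L_s+1)^{-1}$ maps $\sii(\Real^d)$ boundedly into $(\Dom(l_s),\|\cdot\|_{l_s})$; it therefore suffices to show that $(\Dom(l),\|\cdot\|_l)$ embeds compactly in $\sii(\Real^d)$, equivalently that the Friedrichs operator $M:=-\Delta_{y}-c_d/|y|^2+|y|^2/16$ associated with $l$ has compact resolvent.

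The second reduction removes the inverse-square potential via the Hardy transform $\mathcal{V}$ of~\eqref{unitary.Weidl}. By the pointwise identity~\eqref{trick.Weidl} and the integration by parts leading to~\eqref{form.Weidl}, used with $A=0$, and because $\mathcal{V}$ sends $\frac{1}{16}\||y|\,\psi\|_{\sii(\Real^d)}^2$ to $\frac{1}{16}\int_{\Real^d}|y|^2|g|^2\,|y|^{-(d-2)}\,\der y$, the operator $M$ is unitarily equivalent in $\sii(\Real^d,|y|^{-(d-2)}\der y)$ to the operator $\tilde M$ associated with the form $\tilde l[g]=\int_{\Real^d}\big(|\nabla g|^2+\frac{1}{16}|y|^2|g|^2\big)|y|^{-(d-2)}\,\der y$. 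In spherical coordinates~\eqref{spherical} one has $|y|^{-(d-2)}\der y=r\,\der\sigma\,\der r$ and $\tilde l[g]=\int\big(|g_{,r}|^2+r^{-2}|\der'g|_{S^{d-1}}^2+\frac{1}{16}r^2|g|^2\big)r\,\der\sigma\,\der r$. Decomposing along the eigenspaces of $-\Delta_{S^{d-1}}$, whose eigenvalues are $\ell(\ell+d-2)$ for $\ell=0,1,2,\dots$, diagonalises $\tilde M$ as an orthogonal sum $\tilde M=\bigoplus_\ell\tilde M_\ell$ (with the multiplicity of the corresponding spherical-harmonic space), where $\tilde M_\ell$ acts in $\sii\big((0,\infty),r\,\der r\big)$ as $-r^{-1}(rg')'+\big(\ell(\ell+d-2)/r^2+r^2/16\big)g$, with the Friedrichs condition at the origin.

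It remains to verify the two properties that make such an orthogonal sum have compact resolvent. First, by the arithmetic--geometric mean inequality $\ell(\ell+d-2)/r^2+r^2/16\geq\frac{1}{2}\sqrt{\ell(\ell+d-2)}$ pointwise, hence $\inf\sigma(\tilde M_\ell)\geq\frac{1}{2}\sqrt{\ell(\ell+d-2)}\to\infty$ as $\ell\to\infty$. Second, each $\tilde M_\ell$ has compact resolvent: for $\ell\geq1$ the potential $\ell(\ell+d-2)/r^2+r^2/16$ is confining at both endpoints of $(0,\infty)$, and for $\ell=0$ the operator $\tilde M_0$ is the radial part of the two-dimensional harmonic oscillator, which is classically of purely discrete spectrum. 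In either case $(\Dom(\tilde l_\ell),\|\cdot\|_{\tilde l_\ell})$ embeds compactly in $\sii\big((0,\infty),r\,\der r\big)$: the harmonic term controls the endpoint $r=\infty$, while at $r=0$ a one-dimensional Sobolev estimate of the kind used in the proof of Lemma~\ref{Lem.aux} forces $\int_0^a|g|^2\,r\,\der r\to0$ uniformly over bounded subsets of $\Dom(\tilde l_\ell)$. Consequently $(\tilde M+1)^{-1}$ is the operator-norm limit, as $N\to\infty$, of the finite sums $\bigoplus_{\ell\leq N}(\tilde M_\ell+1)^{-1}$ of compact operators, since the remaining tail has norm $\leq\big(1+\frac{1}{2}\sqrt{(N+1)(N+d-1)}\big)^{-1}\to0$; thus $(\tilde M+1)^{-1}$ is compact, and unwinding the two reductions so is $(L_s+1)^{-1}$.

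The only genuinely delicate point is the behaviour at the origin: after the Hardy transform the weight $r\,\der\sigma\,\der r$ degenerates there and nothing confines as $r\to0$, so resolvent compactness is not a formality — indeed, in the original variables the quadratic form of $M$ does not control $\|\nabla\psi\|_{\sii(\Real^d)}^2$ near $0$, since the gradient may be cancelled by the critical inverse-square term. The spherical-harmonic decomposition after $\mathcal{V}$ is what circumvents this: it exhibits the $\ell\geq1$ channels as honestly confined one-dimensional problems and the $\ell=0$ channel as the discrete-spectrum two-dimensional radial oscillator, with channel ground-state energies escaping to infinity.
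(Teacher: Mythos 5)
Your proof is correct, but it takes a genuinely different and more self-contained route than the paper. After the same first reduction (via Lemma~\ref{Lem.norm.eq}, it suffices to show that $\Dom(l)$ embeds compactly in $\sii(\Real^d)$), the paper simply conjugates by $w^{1/2}$ and cites the compact embedding of \cite[Prop.~9.2]{Vazquez-Zuazua_2000} in the weighted space (alternatively referring to standard methods in \cite[Sec.~XIII.14]{RS4}). You instead prove the compact embedding from scratch: the Hardy transform $\mathcal V$ of~\eqref{unitary.Weidl} eliminates the inverse-square singularity, the spherical-harmonic decomposition reduces the problem to one-dimensional radial channels $\tilde M_\ell$, and compactness of $(\tilde M+1)^{-1}$ follows because it is the operator-norm limit of the finite partial sums $\bigoplus_{\ell\leq N}(\tilde M_\ell+1)^{-1}$ of compact operators, with the tail controlled by the divergence $\inf\sigma(\tilde M_\ell)\geq\tfrac12\sqrt{\ell(\ell+d-2)}\to\infty$. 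This is longer than a citation but has the virtue of anticipating, and essentially re-deriving in passing, the explicit diagonalisation the paper carries out later in the proof of Proposition~\ref{Prop.Laguerre}; it also makes transparent exactly where the delicate point at $r=0$ enters, which you correctly flag.

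One small imprecision in the last verification: the statement that ``at $r=0$ a one-dimensional Sobolev estimate of the kind used in the proof of Lemma~\ref{Lem.aux} forces $\int_0^a|g|^2\,r\,\der r\to0$ uniformly over bounded subsets of $\Dom(\tilde l_\ell)$'' is fine for $\ell\geq1$, where the centrifugal term gives
\begin{equation*}
  \int_0^a|g|^2\,r\,\der r\;\leq\;\frac{a^2}{\nu_\ell}\int_0^\infty\frac{\nu_\ell}{r^2}|g|^2\,r\,\der r\,,
\end{equation*}
but for $\ell=0$ this reasoning is circular: nothing in $\tilde l_0$ confines at the origin, and the uniform smallness of $\int_0^a|g|^2\,r\,\der r$ over a bounded family is a \emph{consequence} of Rellich on a disk, not a route to it. For $\ell=0$ the correct and sufficient justification is the one you already supply in the preceding clause (the radial sector of the two-dimensional harmonic oscillator has compact resolvent by Rellich on compacta together with confinement at $r=\infty$; the origin is simply an interior point of the disk and requires no special smallness estimate). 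So the single sentence is slightly misleading, but the argument as a whole stands.
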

\begin{proof}
In view of Lemma~\ref{Lem.norm.eq}, it is enough to show
that $\Dom(l)$ is compactly embedded in $\sii(\Real^d)$.
This property can be established by standard methods (\cf~\cite[Sec.~XIII.14]{RS4}),
but one can alternatively recall an existing result from~\cite{Vazquez-Zuazua_2000}:
For any $\psi \in C_0^\infty(\Real^d)$, it is easy to see that
$$
  l[w^{1/2}\psi] = \|\nabla\psi\|_{\sii_w(\Real^d)}^2
  - c_d \left\|\frac{\psi}{|y|}\right\|_{\sii_w(\Real^d)}^2
  - \frac{d}{4} \, \|\psi\|_{\sii_w(\Real^d)}^2
$$
and the completion of $C_0^\infty(\Real^d)$
with respect to the norm induced by the quadratic form
on the right hand side is compactly embedded in $\sii_w(\Real^d)$
due to \cite[Prop.~9.2]{Vazquez-Zuazua_2000}.
\end{proof}

Consequently, $L_s$~has a purely discrete spectrum.
In particular, $\lambda_B(s)$~represents an eigenvalue of~$L_s$ for any $s \geq 0$.
By the variational characterisation of the spectrum of~$L_s$,
we have
\begin{equation}\label{lambdaB}
  \lambda_B(s) = \min_{\stackrel[\psi \not= 0 ]{}{\psi \in \Dom(l)}}
  \frac{l_s[\psi]}{\|\psi\|_{\sii(\Real^d)}^2}
  \,,
\end{equation}
where we can indeed take $\Dom(l)$ instead of $\Dom(l_s)$
due to Lemma~\ref{Lem.norm.eq}.

\subsection{No magnetic field}
In the absence of magnetic field, \ie~$B=0$,
we can always choose $A=0$.
In this case, $L_s$~is independent of~$s$
and it coincides with the operator~$L$
associated with the quadratic form~$l$ introduced in~\eqref{l0-form}.
The spectral problem for~$L$ can be solved explicitly
by means of a separation of variables (\cf~\cite[Prop.~3]{K7}).
\begin{Proposition}\label{Prop.Laguerre}
Let $d \geq 2$. We have
$$
  \sigma(L) =
  \left\{n+\frac{1+\sqrt{\ell(\ell+d-2)}}{2}\right\}_{n,\ell \in \Nat}
  \,.
$$
\end{Proposition}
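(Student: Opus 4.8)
The plan is to diagonalise~$L$ by separation of variables in the spherical coordinates~\eqref{spherical}. First I would use the orthogonal decomposition
$$
  \sii(\Real^d) \cong \bigoplus_{\ell\in\Nat}
  \sii\big((0,\infty),r^{d-1}\,\der r\big) \otimes \mathcal{H}_\ell
  \,,
$$
where $\mathcal{H}_\ell \subset \sii(S^{d-1})$ is the finite-dimensional eigenspace of the Laplace--Beltrami operator $-\Delta_{S^{d-1}}$ corresponding to the eigenvalue $\ell(\ell+d-2)$ (the spherical harmonics of degree~$\ell$). Since the potentials $-c_d/|y|^2$ and $|y|^2/16$ are radial, the form~$l$ of~\eqref{l0-form} is block-diagonal with respect to this decomposition. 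On the $\ell$-th block, the substitution $\psi = r^{-(d-1)/2} u(r)\,Y_\ell(\sigma)$, which is unitary from $\sii((0,\infty),r^{d-1}\,\der r)$ onto $\sii((0,\infty),\der r)$, reduces the form to
$$
  u \longmapsto
  \int_0^\infty |u'(r)|^2 \,\der r
  + \Big(\ell(\ell+d-2)-\tfrac14\Big)\int_0^\infty \frac{|u(r)|^2}{r^2}\,\der r
  + \frac{1}{16}\int_0^\infty r^2 |u(r)|^2 \,\der r
  \,,
$$
the crucial cancellation $\tfrac{(d-1)(d-3)}{4} - c_d = -\tfrac14$ being produced precisely by the critical value $c_d = (d-2)^2/4$. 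Denoting by~$L_\ell$ the self-adjoint operator associated with the appropriate closure of radial test functions, we get $\sigma(L) = \bigcup_{\ell\in\Nat}\sigma(L_\ell)$; by Proposition~\ref{Prop.compact} (applied with $A=0$) the spectra involved are purely discrete.

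Next I would identify each~$L_\ell$ with a rescaled radial harmonic oscillator. The dilation $r = 2\varrho$ turns $L_\ell$ into $\tfrac14$ times the operator
$$
  -\frac{\der^2}{\der\varrho^2} + \frac{\nu^2-\tfrac14}{\varrho^2} + \varrho^2
  \qquad\text{on}\qquad \sii\big((0,\infty),\der\varrho\big)
  \,, \qquad
  \nu := \sqrt{\ell(\ell+d-2)} \geq 0
  \,,
$$
whose Friedrichs realisation is the classical generalised Laguerre operator, with spectrum $\{4n+2\nu+2\}_{n\in\Nat}$ and eigenfunctions proportional to $\varrho^{\nu+1/2}\,\Euler^{-\varrho^2/2}\,L_n^{(\nu)}(\varrho^2)$ (see, \eg, the standard treatments of the radial harmonic oscillator). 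Undoing the scaling gives $\sigma(L_\ell) = \{n + \tfrac12(1+\sqrt{\ell(\ell+d-2)})\}_{n\in\Nat}$. Since the Laguerre functions are complete in $\sii((0,\infty),\der\varrho)$ for each~$\nu$ and the spherical harmonics are complete in $\sii(S^{d-1})$, the corresponding eigenfunctions $r^{-(d-1)/2}\varrho^{\nu+1/2}\Euler^{-\varrho^2/2}L_n^{(\nu)}(\varrho^2)\,Y_\ell(\sigma)$ form an orthonormal basis of $\sii(\Real^d)$ lying in~$\Dom(l)$, so that the union of the spectra over $\ell\in\Nat$ both yields the asserted formula and shows there are no further eigenvalues.

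The delicate point, and the main obstacle, is the behaviour at the origin: one must verify that the form closure defining~$L$ selects, in each angular sector, exactly the Friedrichs boundary condition used above. For $\ell\geq 1$ one has $\nu = \sqrt{\ell(\ell+d-2)} \geq \sqrt{d-1} \geq 1$, hence $\nu^2-\tfrac14 \geq \tfrac34$, the radial operator is in the limit-point case at~$0$, and it is essentially self-adjoint on $C_0^\infty((0,\infty))$; nothing has to be checked. The only subtle channel is $\ell=0$, where the effective radial potential is the borderline inverse-square term $-\tfrac14 r^{-2}$ — a manifestation of the criticality of~$H_0$ — which puts the radial operator in the limit-circle case at~$0$. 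Here one must confirm that the completion of $C_0^\infty(\Real^d)$ in the norm $\|\cdot\|_l$ corresponds, in this sector, to the Friedrichs extension, \ie\ to the boundary condition singling out radial profiles behaving like $r^{1/2}$ rather than $r^{1/2}\log r$ near the origin; this is the standard description of the Friedrichs extension of $-\der^2/\der r^2 - \tfrac14 r^{-2}$ and amounts to using the value $\nu=0$ in the Laguerre basis. Alternatively, one can bypass the discussion of boundary conditions altogether by invoking directly the functional setting of~\cite{Vazquez-Zuazua_2000} already used in the proof of Proposition~\ref{Prop.compact}, and then diagonalising the quadratic form~$l$ explicitly on the dense set spanned by the functions above.
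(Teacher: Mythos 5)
Your proof is correct and follows essentially the same route as the paper: separation of variables into spherical-harmonic sectors, reduction to a one-dimensional radial problem, and explicit diagonalisation via generalised Laguerre functions, with the paper using the transform $g=\rho^{(d-2)/2}\psi$ into $\sii\big((0,\infty),\rho\,\der\rho\big)$ and citing \cite[Prop.~3]{K7} for the resulting radial spectrum, while you pass to $\sii\big((0,\infty),\der r\big)$ and the standard radial harmonic oscillator. The one substantive thing you add that the paper elides is the explicit discussion of limit-point versus limit-circle behaviour at the origin and the identification of the form closure with the Friedrichs extension in the borderline $\ell=0$ channel, which the paper implicitly delegates to the cited reference.
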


We point out that the natural numbers~$\Nat$ contain~$0$ in our convention.
In particular, for the lowest eigenvalue~\eqref{lambdaB}, we get
\begin{Corollary}\label{Corol.1/2}
Let $d \geq 2$.
We have $\lambda_0(s) = 1/2$ for all $s \geq 0$.
\end{Corollary}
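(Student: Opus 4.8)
The plan is to deduce the corollary directly from Proposition~\ref{Prop.Laguerre}, of which it is essentially a restatement in the notation of~\eqref{lambdaB}. First I would invoke the observation made at the beginning of this subsection: when $B=0$ one may take the gauge $A=0$, so the scaled potential $A_s(y)=\Euler^{s/2}A(\Euler^{s/2}y)$ of~\eqref{scalevector} vanishes identically, whence $L_s=L$ for every $s\geq 0$ and therefore $\lambda_0(s)=\inf\sigma(L_s)=\inf\sigma(L)$ is independent of $s$.

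It then remains to identify $\inf\sigma(L)$. By Proposition~\ref{Prop.Laguerre} we have $\sigma(L)=\{\,n+\tfrac{1+\sqrt{\ell(\ell+d-2)}}{2}\,\}_{n,\ell\in\Nat}$; since $n\geq 0$ and $\ell(\ell+d-2)\geq 0$ with equality precisely when $\ell=0$, the smallest element of this set is attained at $n=\ell=0$ and equals $\tfrac12$. This gives $\lambda_0(s)=1/2$ for every $s\geq 0$, and by Proposition~\ref{Prop.compact} it is in fact an eigenvalue, realised by the ground state $g_{0,0}$ of~\eqref{Laguerre}.

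I do not anticipate any genuine difficulty here: the whole content has already been packed into Propositions~\ref{Prop.Laguerre} and~\ref{Prop.compact}, so the corollary is a one-line consequence. If one preferred a self-contained argument not relying on the explicit Laguerre computation, one could instead work in the Hardy-transformed radial picture used in the proof of Proposition~\ref{Prop.Laguerre}: decomposing $\sii(\Real^d)$ into spherical harmonics, the trial function $g\equiv\Euler^{-\rho^2/8}$ in the $\ell=0$ channel yields the upper bound $\lambda_0(s)\leq 1/2$, while completing the square in the radial form $\int_0^\infty\bigl(|g'(\rho)|^2+\tfrac{\rho^2}{16}|g(\rho)|^2\bigr)\rho\,\der\rho$ (discarding the non-negative inverse-square terms arising for $\ell\geq 1$) gives the matching lower bound $\lambda_0(s)\geq 1/2$.
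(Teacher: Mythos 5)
Your proposal is correct and follows essentially the same route as the paper: the paper treats this corollary as an immediate consequence of Proposition~\ref{Prop.Laguerre} (with $A=0$, so $L_s=L$ for all $s$), noting only that $0\in\Nat$ in its convention so the minimum of $\sigma(L)$ is attained at $n=\ell=0$ and equals $1/2$. Your additional self-contained harmonic-oscillator argument is a fine but unnecessary supplement.
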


It follows from the corollary that $\lambda_0(\infty)=1/2$
and therefore $\gamma_0 \geq 1/2$ due to Proposition~\ref{Prop.reduction}.
This together with the following result
proves Theorems~\ref{Thm.main.pre} and~\ref{Thm.main}
in the magnetic-free case $B=0$.

\begin{Proposition}\label{Prop.optimal}
Let $d \geq 2$.
We have $\gamma_0 = 1/2$.
\end{Proposition}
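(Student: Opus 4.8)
The plan is to prove the matching upper bound $\gamma_0 \le 1/2$; the reverse inequality $\gamma_0 \ge 1/2$ has already been obtained from Corollary~\ref{Corol.1/2} and Proposition~\ref{Prop.reduction}(ii). The idea is to exhibit one explicit initial datum $u_0 \in \sii_w(\Real^d)$ whose heat evolution decays \emph{precisely} like $(1+t)^{-1/2}$ in $\sii(\Real^d)$; by the very definition~\eqref{rate}, such a lower bound rules out any $\gamma > 1/2$.

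For $B=0$ we may choose $A=0$, so that $L_s$ coincides with the $s$-independent operator~$L$ of~\eqref{l0-form}, whose lowest eigenvalue is $1/2$ by Proposition~\ref{Prop.Laguerre}; let $\psi_0$ be a corresponding eigenfunction, normalised in $\sii(\Real^d)$ (concretely, the radial Gaussian-type function obtained from the $n=\ell=0$ case of~\eqref{Laguerre}, though its explicit form will not be needed). I would take $u_0 := w^{-1/2}\psi_0$. Since $w \ge 1$ pointwise, $w^{-1} \le 1$, and hence $\|u_0\|_{\sii_w(\Real^d)}^2 = \int_{\Real^d} w\,w^{-1}|\psi_0|^2\,\der x = \|\psi_0\|_{\sii(\Real^d)}^2 = 1$, so that $u_0 \in \sii_w(\Real^d)$; likewise $\int_{\Real^d} w^{-1}|\psi_0|^2\,\der x \le 1$, so $w^{-1/2}\psi_0 \in \sii(\Real^d)$ and its norm is strictly positive.

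Next I would solve the transformed problem explicitly. With $v_0 := w^{1/2} u_0 = \psi_0 \in \sii(\Real^d)$, Proposition~\ref{Prop.Lions} yields the unique weak solution of~\eqref{Cauchy.noweight}, which for $B=0$ is simply $\tilde v_{,s} + L\tilde v = 0$, whence $\tilde v(s) = \Euler^{-sL}\psi_0 = \Euler^{-s/2}\psi_0$. Running the transformations~\eqref{noweight}, \eqref{ISST} backwards and invoking the isometry~\eqref{preserve}, the associated solution $u$ of~\eqref{Cauchy} with $u(0)=u_0$ --- which by Proposition~\ref{Prop.justify} equals $\Euler^{-tH_0}u_0$ --- satisfies
\[
  \|u(t)\|_{\sii(\Real^d)} = \|\tilde u(s)\|_{\sii(\Real^d)} = \big\| w^{-1/2}\tilde v(s)\big\|_{\sii(\Real^d)} = \Euler^{-s/2}\,\big\| w^{-1/2}\psi_0\big\|_{\sii(\Real^d)} = (1+t)^{-1/2}\,\big\| w^{-1/2}\psi_0\big\|_{\sii(\Real^d)} ,
\]
where $s = \log(1+t)$. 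Hence $\|\Euler^{-tH_0}\|_{\sii_w(\Real^d)\to\sii(\Real^d)} \ge \big\|w^{-1/2}\psi_0\big\|_{\sii(\Real^d)}\,(1+t)^{-1/2}$ for all $t\ge0$, and since the prefactor is a positive constant, dividing by $(1+t)^{-\gamma}$ and letting $t\to\infty$ shows that no $\gamma>1/2$ can occur in~\eqref{rate}. Therefore $\gamma_0 \le 1/2$, and combined with the already-known lower bound, $\gamma_0 = 1/2$.

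I expect the only slightly delicate point to be the bookkeeping of the chain of transformations: one must check that the function $u$ recovered from $\tilde v(s)=\Euler^{-s/2}\psi_0$ through~\eqref{noweight} and~\eqref{ISST} is genuinely the semigroup solution $\Euler^{-tH_0}u_0$. This, however, is exactly the content of Propositions~\ref{Prop.Lions} and~\ref{Prop.justify}, so no fresh analysis is needed; the remainder is a one-line computation.
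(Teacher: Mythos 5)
Your proposal is correct and follows essentially the same route as the paper: both take the initial datum $u_0 = w^{-1/2}\psi$ with $\psi$ the ground-state eigenfunction of~$L$ (eigenvalue $1/2$, explicitly $\psi_1(y)=|y|^{-(d-2)/2}\Euler^{-|y|^2/8}$), observe that $\tilde v(s)=\Euler^{-s/2}\psi$ solves the transformed problem, and undo the self-similarity transformations via~\eqref{preserve} to get $\|u(t)\|_{\sii(\Real^d)} = (1+t)^{-1/2}\|u_0\|_{\sii(\Real^d)}$, which forces $\gamma_0\le 1/2$. The only differences are cosmetic (normalisation of the eigenfunction and the explicit verification that $u_0\in\sii_w(\Real^d)$, which the paper leaves implicit).
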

\begin{proof}
We have already established $\gamma_0 \geq 1/2$.
To prove the opposite bound, it is enough to find
an initial datum $u_0 \in \sii_w(\Real^d)$
such that the solution of~\eqref{Cauchy} satisfies the inequality
$\|u(t)\|_{\sii(\Real^d)} \geq c \, (1+t)^{-1/2}$
for all $t \geq 0$ with some positive constant~$c$
that may depend on~$u_0$.
Since $\psi_1(y):=|y|^{-(d-2)/2} \Euler^{-|y|^2/8}$
is an eigenfunction of~$L$ corresponding to the eigenvalue~$1/2$,
the function $\tilde{v}(y,s):=\Euler^{-s/2} \psi_1(y)$
solves~\eqref{Cauchy.noweight},
subject to the initial condition $\tilde{v}(y,0)=\psi_1(y)$.
Defining~$u$ by means of~\eqref{ISST} and~\eqref{noweight},
we get that it solves~\eqref{Cauchy},
subject to the initial condition $u(x,0)=w(x)^{-1/2}\psi_1(x)=:u_0(x)$.
Clearly, $u_0 \in \sii_w(\Real^d)$.
Recalling~\eqref{preserve} and~\eqref{noweight} again, we get
$$
  \|u(t)\|_{\sii(\Real^d)}
  = \|w^{-1/2}\tilde{v}(s)\|_{\sii(\Real^d)}
  = \Euler^{-s/2} \, \|u_0\|_{\sii(\Real^d)}
  = (1+t)^{-1/2} \, \|u_0\|_{\sii(\Real^d)}
$$
for all $t \geq 0$,
where the last identity follows by the relationship~\eqref{relationship}.
\end{proof}

\subsection{Lower bounds}
In this subsection, we focus on part~(i) of Proposition~\ref{Prop.reduction}.
First of all, using just the diamagnetic inequality~\eqref{diamagnetic}
and Corollary~\ref{Corol.1/2}, we immediately get the following results.
\begin{Proposition}\label{Prop.bound}
Let $d \geq 2$. If~$B$ is smooth and closed, then
$\lambda_B(s) \geq 1/2$ for all $s \geq 0$.
\end{Proposition}
\begin{Corollary}
Let $d \geq 2$.
If~$B$ is smooth, closed and compactly supported,
then $\gamma_B \geq 1/2$.
\end{Corollary}

From the previous subsection, we already know that both bounds
are optimal in the absence of magnetic field.
Now we show that there is always an improvement
in the case of Proposition~\ref{Prop.bound}
whenever~$B$ is non-trivial.
\begin{Proposition}\label{Prop.strict}
Let $d \geq 2$. Suppose that~$B$ is smooth and closed.
If $B \not=0 $, then
$$
  \forall s \geq 0 \,, \qquad
  \lambda_B(s) > \frac{1}{2}
  \,.
$$
\end{Proposition}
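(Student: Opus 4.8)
The plan is to argue by contradiction. We already know $\lambda_B(s) \geq \tfrac12$ from Proposition~\ref{Prop.bound}, so it suffices to exclude the equality $\lambda_B(s) = \tfrac12$; the point is that equality would force the ground state of $L_s$ to saturate the diamagnetic inequality, which in turn forces $B = 0$.

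First I would use Proposition~\ref{Prop.compact}: since $L_s$ has compact resolvent, $\lambda_B(s)$ is an eigenvalue, so there is a normalised $\psi \in \Dom(l_s) = \Dom(l)$ (the two form domains coinciding by Lemma~\ref{Lem.norm.eq}) with $l_s[\psi] = \tfrac12 \|\psi\|_{\sii(\Real^d)}^2$, and by elliptic regularity $\psi$ is smooth on $\Real^d \setminus \{0\}$. Next I would invoke the diamagnetic inequality~\eqref{diamagnetic}: the pointwise bound $|(\nabla - iA_s)\psi| \geq \big|\nabla|\psi|\big|$, together with the fact that the potential terms $-c_d/|y|^2$ and $|y|^2/16$ involve only $|\psi|^2$, gives $|\psi| \in \Dom(l)$ and $l_s[\psi] \geq l[\,|\psi|\,]$. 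Combining this with the variational bound $l[\,|\psi|\,] \geq \tfrac12 \|\,|\psi|\,\|_{\sii(\Real^d)}^2$ supplied by Proposition~\ref{Prop.Laguerre} (equivalently Corollary~\ref{Corol.1/2}), all of these inequalities must in fact be equalities; in particular $|\psi|$ is a ground state of the magnetic-free operator $L$ and $|(\nabla - iA_s)\psi| = \big|\nabla|\psi|\big|$ a.e.

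Then I would exploit the explicit structure of the ground state of $L$. By Proposition~\ref{Prop.Laguerre} the lowest eigenvalue $\tfrac12$ of $L$ is simple (it corresponds to $n = \ell = 0$, and $\nu_0 = 0$ is simple on $S^{d-1}$), and by~\eqref{Laguerre} together with~\eqref{unitary.Weidl} the corresponding eigenfunction is, up to a scalar, $\psi_1(y) := |y|^{-(d-2)/2}\Euler^{-|y|^2/8}$, which is strictly positive on $\Real^d \setminus \{0\}$. Hence $|\psi|$ coincides, by simplicity, with a positive multiple $c\,\psi_1$ of $\psi_1$, so $|\psi| > 0$ on $\Real^d \setminus \{0\}$. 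On any simply connected open set $U \subset \Real^d \setminus \{0\}$ I may then write $\psi = |\psi|\,\Euler^{i\varphi}$ with $\varphi$ smooth and real-valued, and substituting into the saturated identity $|(\nabla - iA_s)\psi|^2 = \big|\nabla|\psi|\big|^2 + |\psi|^2\,|\nabla\varphi - A_s|^2 = \big|\nabla|\psi|\big|^2$ gives $\nabla\varphi = A_s$ on $U$, hence $\der A_s = 0$ on $U$. Since $U$ is arbitrary, $\der A_s = 0$ on all of $\Real^d \setminus \{0\}$. Because $A_s(y) = \Euler^{s/2} A(\Euler^{s/2}y)$ by~\eqref{scalevector}, a short computation in components gives $(\der A_s)(y) = \Euler^{s}\,B(\Euler^{s/2}y)$, so $B = 0$ on $\Real^d \setminus \{0\}$ and, by smoothness, on all of $\Real^d$ --- contradicting $B \neq 0$. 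This would complete the argument.

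The step I expect to be the main obstacle is the equality case of the diamagnetic inequality: one has to make sure the eigenfunction is regular enough and nowhere vanishing away from the origin, so that a single-valued smooth phase $\varphi$ genuinely exists locally and the relation $\nabla\varphi = A_s$ holds in the classical sense. This is exactly the mechanism already used in the proof of Proposition~\ref{Prop.Hardy.crucial}, so I would carry it out in the same way. Note that the failure of simple connectedness of $\Real^d \setminus \{0\}$ when $d = 2$ is harmless here, since the conclusion $\der A_s = 0$ is purely local.
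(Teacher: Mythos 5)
Your proposal is correct and follows essentially the same route as the paper's proof: contradiction via the saturated diamagnetic inequality, identification of $|\psi|$ with the positive ground state of $L$, extraction of a phase giving $\nabla\varphi = A_s$, and hence $\der A_s = 0$ forcing $B=0$. Your extra care in working on simply connected subsets of $\Real^d\setminus\{0\}$ (and the explicit relation $\der A_s(y)=\Euler^{s}B(\Euler^{s/2}y)$) is a harmless refinement of the same argument.
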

\begin{proof}
By Proposition~\ref{Prop.bound}, we already know that
$\lambda_B(s) \geq 1/2$ for all $s \geq 0$.
In order to show that the inequality is strict,
we assume by contradiction that $B \not= 0$
and $\lambda_B(s)=1/2$ for some $s \geq 0$.
Let~$\psi$ denote a corresponding eigenfunction of~$L_s$. 
By elliptic regularity theory, we know that~$\psi$
is smooth in $\Real^d \setminus \{0\}$.
We have 
 \begin{equation}\label{strict}
  0 = l_s[\psi] - \frac{1}{2} \|\psi\|_{\sii(\Real^d)}^2
  \geq l[|\psi|] - \frac{1}{2} \|\psi\|_{\sii(\Real^d)}^2
  \geq 0
  \,, 
\end{equation}
where the first estimate is the diamagnetic inequality~\eqref{diamagnetic}
and the second estimate follows from the variational characterisation
of the first eigenvalue $\lambda_0=1/2$ of~$L$.
Note that the eigenfunction of~$L$ corresponding to $\lambda_0=1/2$
is unique (up to a normalisation factor) and can be chosen positive.
It then follows from~\eqref{strict} that
 the magnitude~$|\psi|$
must coincide with the first eigenfunction of~$L$.
Writing $\psi = |\psi| \Euler^{i\varphi}$,
where~$\varphi$ is a real-valued smooth function in $\Real^d \setminus \{0\}$,
we thus obtain from~\eqref{strict} that $\nabla\varphi = A_s$
in $\Real^d \setminus \{0\}$.
That is, $A_s$~is exact and thus $\der A_s=0$
in the punctured space $\Real^d \setminus \{0\}$.
From this we conclude that $B=\der A=0$ in~$\Real^d$,
a contradiction.
\end{proof}

In order to apply Proposition~\ref{Prop.reduction}.(i),
we also need to ensure a strict positivity of $\lambda_B(\infty)-1/2$.
The following result shows that this is not always possible.
\begin{Proposition}\label{Prop.log}
Let $d \geq 2$. Suppose that~$B$ is smooth, closed and compactly supported.
Then
$$
  \nu_B(\infty)=0
  \qquad \Longrightarrow \qquad
  \lambda_B(\infty)=\frac{1}{2}
  \,.
$$
\end{Proposition}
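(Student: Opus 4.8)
The plan is to show that when $\nu_B(\infty)=0$, the lowest eigenvalue $\lambda_B(s)$ of $L_s$ converges to $1/2$ as $s\to\infty$, by constructing a trial function for $l_s$ whose Rayleigh quotient tends to $1/2$. Since $\lambda_B(s)\ge 1/2$ always holds by Proposition~\ref{Prop.bound}, this yields $\lambda_B(\infty)=1/2$. The natural candidate is a suitably gauge-corrected version of the magnetic-free ground state $\psi_1(y)=|y|^{-(d-2)/2}\Euler^{-|y|^2/8}$ of $L$. The point is that $\nu_B(\infty)=0$ means, by Proposition~\ref{Prop.eq.r} (or Remark~\ref{Rem.d2} when $d=2$, via $\mathsf{A}_\infty=\der'f_\infty$ on $S^1$ up to the flux-quantum obstruction — but here $\beta=0$ so $\mathsf{A}_\infty$ \emph{is} exact on $S^1$ too), that the limiting angular potential $\mathsf{A}_\infty(\sigma)=\der'f_\infty(\sigma)$ is exact on $S^{d-1}$ for a smooth function $f_\infty$. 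Because $B$ is compactly supported, by~\eqref{compact} one actually has $\mathsf{A}(\sigma,r)=\mathsf{A}_\infty(\sigma)$ for all $r\ge R$, so in the Poincar\'e gauge $A$ agrees with the ``pure gauge'' field $\der(f_\infty\circ\mathcal{L}^{-1})$ outside the ball $D_R$ (where $f_\infty$ is promoted to a function on $\Real^d\setminus\{0\}$, homogeneous of degree zero).

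The key steps, in order, are: (1) Record that by scaling, $A_s(y)=\Euler^{s/2}A(\Euler^{s/2}y)$, so in terms of the self-similar radial variable $\rho=|y|$ the magnetic potential $A_s$ is supported (modulo a pure gauge) in the shrinking ball $\rho\le \Euler^{-s/2}R$; for $\rho>\Euler^{-s/2}R$ the covariant derivative $(\nabla_{\!y}-iA_s)$ equals $\Euler^{if_s}\nabla_{\!y}\,\Euler^{-if_s}$ with $f_s(y):=f_\infty(y/|y|)$ the degree-zero gauge, \emph{independent of $s$}. (2) Use a radial cut-off $\chi_s$ supported in $\{\rho>\Euler^{-s/2}R\}$ that equals $1$ for $\rho\ge 2\Euler^{-s/2}R$, and set $\psi_s:=\Euler^{if_s}\chi_s\,\psi_1$. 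Then $l_s[\psi_s]=l[\chi_s\psi_1]$ since the gauge $\Euler^{if_s}$ has no radial component (the angular derivative of $\psi_1$ vanishes because $\psi_1$ is radial, so the cross terms $\mathsf{A}_\infty\cdot\der'\psi_1$ drop out entirely; this is exactly where radial symmetry of the ground state is essential). (3) Estimate $l[\chi_s\psi_1]$: the cut-off correction is controlled by $\int |\chi_s'|^2|\psi_1|^2\,\rho^{d-1}\,\der\rho$ over the annulus $\Euler^{-s/2}R<\rho<2\Euler^{-s/2}R$, which using $|\psi_1|^2\rho^{d-1}\sim \rho$ near the origin and $|\chi_s'|\lesssim \Euler^{s/2}/R$ gives a contribution of order $\Euler^{-s}\to 0$. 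Meanwhile $\|\chi_s\psi_1\|^2\to\|\psi_1\|^2$. (4) Conclude $\limsup_{s\to\infty}\lambda_B(s)\le \lim_{s\to\infty} l[\chi_s\psi_1]/\|\chi_s\psi_1\|^2 = l[\psi_1]/\|\psi_1\|^2 = 1/2$, and combine with the lower bound $\lambda_B(s)\ge 1/2$ to get $\lambda_B(\infty)=1/2$.

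The main obstacle I expect is step (2)–(3): making rigorous the claim that, once one gauges away the exact part of $A_\infty$, the contribution of $A_s$ inside the tiny ball $\rho\le 2\Euler^{-s/2}R$ is genuinely negligible in the quadratic form. Inside that ball $A_s$ is large ($|A_s(y)|\sim \Euler^{s/2}$), so one cannot simply drop it; instead one must keep the full cut-off so that $\psi_s$ vanishes there, and then verify that the ``price'' of the cut-off — the $|\chi_s'|^2|\psi_1|^2$ term and the loss in the Hardy term $-c_d|\psi_s|^2/|y|^2$ near the origin — is small. The Hardy-term bookkeeping is the delicate part: one should use the Hardy transform $g:=\rho^{(d-2)/2}\psi$ (as in Proposition~\ref{Prop.Laguerre}) to rewrite $l$ without the singular potential, reducing everything to the two-dimensional-type form $\int|g_{,\rho}|^2\rho\,\der\rho+\int(\rho^2/16)|g|^2\rho\,\der\rho$ plus an angular piece, after which the cut-off estimate becomes the elementary bound $\int_{\Euler^{-s/2}R}^{2\Euler^{-s/2}R}|\chi_s'|^2\,\rho\,\der\rho=O(\Euler^{-s})$. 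One also needs the density/core observations (Lemma~\ref{Lem.core}, Lemma~\ref{Lem.norm.eq}) to justify that $\psi_s\in\Dom(l_s)=\Dom(l)$ is an admissible trial function.
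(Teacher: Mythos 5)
Your overall strategy is the same as the paper's: combine the lower bound $\lambda_B(s)\ge 1/2$ from Proposition~\ref{Prop.bound} with an upper bound obtained from a trial function of the form (gauge phase)\,$\times$\,(cut-off)\,$\times$\,(magnetic-free ground state $\psi_1$), using that $\nu_B(\infty)=0$ makes $\mathsf{A}_\infty$ exact on $S^{d-1}$. Steps (1)--(2) are sound. However, step (3) contains a genuine quantitative error that breaks the argument as written. After the Hardy transform the relevant measure near the origin is $\rho\,\der\rho$ (two-dimensional type), and for a linear cut-off $\chi_s$ rising from $0$ to $1$ between $\rho=a$ and $\rho=2a$ with $a=\Euler^{-s/2}R$ one has $|\chi_s'|^2\sim a^{-2}$ and
\begin{equation*}
  \int_{a}^{2a}|\chi_s'|^2\,\rho\,\der\rho
  \;\sim\; a^{-2}\cdot\tfrac{3}{2}a^{2} \;=\; O(1)\,,
\end{equation*}
not $O(\Euler^{-s})$ as you claim. (You appear to have counted the factor $a$ from the interval length and the factor $a$ from the weight, but only one power of $a^{-1}$ from $|\chi_s'|$ instead of two.) A cut-off transition over a single dyadic scale therefore costs an $O(1)$ amount in the numerator of the Rayleigh quotient, and your bound degenerates to $\lambda_B(s)\le 1/2+O(1)$, which proves nothing. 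This is the classical fact that points have zero, but only logarithmically zero, capacity for the $\rho\,\der\rho$ weight.

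The paper's proof fixes exactly this point by using the logarithmic cut-off $\eta_n$, equal to $\xi(\log_n(n^2\rho))$ on $[1/n^2,1/n]$, for which $\int_0^\infty|\eta_n'|^2\rho\,\der\rho\le\|\xi'\|_\infty^2/\log n\to0$; moreover the cut-off scale is kept \emph{fixed} in $s$, the region where $\mathsf{A}_s\ne\mathsf{A}_\infty$ is \emph{not} cut away but handled by dominated convergence of the term $\int|\psi_1|^2|\eta_n|^2\,|[\mathsf{A}_\infty-\mathsf{A}_s]\varphi|_{S^{d-1}}^2\rho^{-2}\rho^{d-1}\,\der\sigma\,\der\rho\to0$ as $s\to\infty$ (in fact this term vanishes identically once $\Euler^{-s/2}R<1/n^2$), and only afterwards is $n\to\infty$ taken. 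Your construction can be repaired in the same spirit -- e.g.\ by letting $\chi_s$ interpolate logarithmically between the widely separated scales $\Euler^{-s}$ and $\Euler^{-s/2}$, giving a gradient cost $O(1/s)$ -- but the single-scale linear cut-off you propose does not work.
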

\begin{proof}
In view of Proposition~\ref{Prop.bound}, it is enough to establish
an upper bound to~$\lambda_B(s)$ that goes to~$1/2$ as $s\to\infty$.
Following \cite[Prop.~2]{K7},
we obtain it by constructing a suitable trial function
in the variational characterisation of the eigenvalue.

Inspired by \cite[proof of Corol.~VIII.6.4]{Edmunds-Evans},
let~$\xi$ be a smooth real-valued function on~$[0,1]$ such that $\xi=0$
in a right neighbourhood of~$0$ and $\xi=1$ in a left neighbourhood of~$1$.
For any natural number $n \geq 2$,
we define a smooth cut-off function $\eta_n:[0,\infty)\to[0,1]$ by
$$
\displaystyle
\eta_n(\rho) :=
\begin{cases}
  0
  & \mbox{if}\quad \rho<1/n^2 \,,
  \\
  \xi\big(\log_n(n^2 \rho)\big)
  & \mbox{if}\quad \rho\in[1/n^2,1/n] \,,
  \\
  1
  & \mbox{if}\quad \rho>1/n \,.
\end{cases}
$$
We have the limits $\eta_n(\rho) \to 1$ as $n \to \infty$ for every $\rho>0$ and
\begin{equation}\label{eta.convergence}
  \|\eta_n'\|_{\sii((0,\infty),\rho \der \rho)}^2
  \leq \frac{\|\xi'\|_\infty^2}{\log n}
  \xrightarrow[n\to\infty]{}
  0
  \,.
\end{equation}
Hence, the functions $y \mapsto \eta_n(|y|)$ represent
a convenient smooth approximation of the constant function~$1$
in $W_\mathrm{loc}^{1,2}(\Real^2)$
when a singularity in the origin has to be avoided
(\cf~Lemma~\ref{Lem.core}).

We work in the spherical coordinates~\eqref{spherical}
and the Poincar\'e gauge~\eqref{Biot-Savart}.
We define
$$
  g_n(y) := \psi_1(|y|) \, \eta_n(|y|) \, \varphi(y/|y|)
  \,,
$$
where $\psi_1(\rho) := \rho^{-(d-2)/2} \, \Euler^{-\rho^2/8}$
is the eigenfunction of~$L$
corresponding to its first eigenvalue $\lambda_0=1/2$
and~$\varphi$ is a non-trivial smooth solution of
$\der'\varphi-i\mathsf{A}_\infty\varphi=0$ on~$S^{d-1}$.
In view of Proposition~\ref{Prop.eq.r},
such a solution exists due to the hypothesis $\nu_B(\infty)=0$.
We clearly have $g_n \in \Dom(l_s)$ for all $n \geq 2$ and $s \geq 0$.
Passing to the spherical coordinates,
one easily checks the identity (\cf~\eqref{form.spherical})
\begin{align}\label{Lebesgue}
  l_s[g_n] - \frac{1}{2} \, \|g_n\|_{\sii(\Real^d)}^2
  \ = \ &
  \int_{S^{d-1}\times(0,\infty)}
  |\psi_1(\rho)|^2 \, |\eta_n(\rho)|^2 \,
  \frac{\big|
  [\mathsf{A}_\infty(\sigma) - \mathsf{A}_s(\sigma,\rho)]\varphi(\sigma)
  \big|_{S^{d-1}}^2}{\rho^2}
  \, \rho^{d-1} \;\! \der\sigma \, \der \rho
  \nonumber \\
  & + \int_{S^{d-1}\times(0,\infty)}
  |\psi_1(\rho)|^2 \, |\eta_n'(\rho)|^2 \, |\varphi(\sigma)|^2
  \, \rho^{d-1} \;\! \der\sigma \, \der \rho
  \,,
\end{align}
where $\mathsf{A}_s := \nabla\chart \cdot (A_s\circ\chart)$.
Using~\eqref{transfer} and~\eqref{relationship}, we find
\begin{equation}\label{vectorscaled.spherical}
  \mathsf{A}_s(\sigma,\rho)
  = \mathsf{A}(\sigma,\Euler^{s/2} \rho)
  \,.
\end{equation}
Consequently, the first integral on the right hand side of~\eqref{Lebesgue}
goes to zero as $s \to \infty$ by the dominated convergence theorem.
The second term is independent of~$s$ so as is $\|g_n\|_{\sii(\Real^d)}$.
From the variational characterisation~\eqref{lambdaB} we thus obtain
\begin{align*}
  0 \leq \lambda_B(\infty) - \frac{1}{2}
  &\leq \frac{\displaystyle \int_{S^{d-1}\times(0,\infty)}
  |\psi_1(\rho)|^2 \, |\eta_n'(\rho)|^2 \, |\varphi(\sigma)|^2
  \, \rho^{d-1} \;\! \der\sigma \, \der \rho}
  {\displaystyle \int_{S^{d-1}\times(0,\infty)}
  |\psi_1(\rho)|^2 \, |\eta_n(\rho)|^2 \, |\varphi(\sigma)|^2
  \, \rho^{d-1} \;\! \der\sigma \, \der \rho}
  %\\
  %&
  = \frac{\displaystyle \int_{0}^\infty
  \Euler^{-\rho^2/4} \, |\eta_n'(\rho)|^2
  \, \rho \, \der \rho}
  {\displaystyle \int_{0}^\infty
  \Euler^{-\rho^2/4} \, |\eta_n(\rho)|^2
  \, \rho \, \der \rho}
\end{align*}
for every $n \geq 2$.
In view of~\eqref{eta.convergence} and the text before it,
the right hand goes to zero in the limit $n \to \infty$.
We have thus established the desired upper bound $\lambda_B(\infty) \leq 1/2$.
\end{proof}

It remains to study the asymptotic behaviour of $\lambda_B(s)$ as $s \to \infty$
in the case $\nu_B(\infty)\not=0$.
From now on, we could restrict to $d=2$,
since $\nu_B(\infty)=0$ whenever $d \geq 3$, \cf~\eqref{d.high}.
However, there is no complication in continuing with the general setting,
noting that the statements in the higher dimensions will be just void.

\subsection{The asymptotic behaviour}
It will be convenient to work in the spherical coordinates~\eqref{spherical}
and the Poincar\'e gauge~\eqref{Biot-Savart}.

Recalling~\eqref{unitary.spherical}, we may write
\begin{equation}\label{ls.spherical}
  l_s[\mathcal{U}^{-1}\phi] =
  \int_{S^{d-1}\times(0,\infty)}
  \left[
  \frac{\big|(\der' - i\mathsf{A}_s)\phi\big|_{S^{d-1}}^2}{\rho^2}
  + |\phi_{,\rho}|^2
  - c_d \, \frac{|\phi|^2}{\rho^2}
  + \frac{\rho^2}{16} \, |\phi|^2
  \right]
  \rho^{d-1} \;\! \der\sigma \, \der \rho
\end{equation}
for any $\phi \in C_0^\infty\big(S^{d-1}\times(0,\infty)\big)$,
where~$\mathsf{A}_s$ is given by~\eqref{vectorscaled.spherical}.
The unitarily equivalent operator $\mathsf{L}_s := \mathcal{U}L_s\mathcal{U}^{-1}$
in the Hilbert space
$
  \mathcal{H} :=
  \sii\big(S^{d-1}\times(0,\infty),\rho^{d-1} \;\! \der\sigma \, \der \rho\big)
$
is thus associated with the quadratic form
$\mathsf{l}_s[\phi] := l_s[\mathcal{U}^{-1}\phi]$,
$\Dom(\mathsf{l}_s) := \mathcal{U} \Dom(l_s)$.
Of course, we have
$$
  \Dom(\mathsf{l}_s) =
  \overline{C_0^\infty\big(S^{d-1}\times(0,\infty)\big)}^{\|\cdot\|_{\mathsf{l}_s}}
  \,,
$$
where the norm~$\|\cdot\|_{\mathsf{l}_s}$ is defined in analogy with~\eqref{norm}.
By Lemma~\ref{Lem.norm.eq}, we know that $\Dom(\mathsf{l}_s)$
is actually independent of~$s$.
In fact, for any finite $s \geq 0$,
$$
  \Dom(\mathsf{l}_s) = \Dom(\mathsf{l}) :=
  \overline{C_0^\infty\big(S^{d-1}\times(0,\infty)\big)}^{\|\cdot\|_{\mathsf{l}}}
  \,,
$$
where (\cf~\eqref{l0-form})
$$
  \mathsf{l}[\phi] :=
  \int_{S^{d-1}\times(0,\infty)}
  \left[
  \frac{|\der'\phi|_{S^{d-1}}^2}{\rho^2}
  + |\phi_{,\rho}|^2
  - c_d \, \frac{|\phi|^2}{\rho^2}
  + \frac{\rho^2}{16} \, |\phi|^2
  \right]
  \rho^{d-1} \;\! \der\sigma \, \der \rho
  \,.
$$

By~\eqref{Hardy.1D},
we see that the expression on the right hand side of the last formula
is composed of three non-negative terms.
More specifically, recalling~\eqref{Hardy.transform}, we have
\begin{equation*}
  \int_{S^{d-1}\times (0, \infty)} \left[|\phi_{, \rho}|^2
  -c_d \, \frac{|\phi|^2}{\rho^2} \right] \rho^{d-1} \, \der  \sigma \, \der \rho
  = \int_{S^{d-1}\times (0, \infty)}
  \left|
  \rho^{-(d-2)/2}
  \left(\rho^{(d-2)/2} \phi\right)_{\!, \rho}
  \right|^2
  \rho^{d-1} \, \der \sigma \, \der \rho
\end{equation*}
and we may thus conclude
\begin{equation}\label{Doml}
  \Dom(\mathsf{l}) = \left\{
  \phi \in \mathcal{H} \ \left| \
  \frac{|\der'\phi|_{S^{d-1}}}{\rho}, \
  \rho^{-(d-2)/2} \left(\rho^{(d-2)/2} \phi\right)_{\!, \rho} , \
  \frac{|\phi|^2}{\rho^2}, \
  \rho \phi \in \mathcal{H}
  \right.
  \right\}.
\end{equation}

Taking into account~\eqref{vectorscaled.spherical} and~\eqref{limits},
it is reasonable to expect that the behaviour of~$\mathsf{L}_s$
in the limit $s\to\infty$ will be determined by the operator~$\mathsf{L}_\infty$
associated with the the quadratic form
$$
\begin{aligned}
  \mathsf{l}_\infty[\phi] &:=
  \int_{S^{d-1}\times(0,\infty)}
  \left[
  \frac{\big|(\der' - i\mathsf{A}_\infty)\phi\big|_{S^{d-1}}^2}{\rho^2}
  + |\phi_{,\rho}|^2
  - c_d \, \frac{|\phi|^2}{\rho^2}
  + \frac{\rho^2}{16} \, |\phi|^2
  \right]
  \rho^{d-1} \;\! \der\sigma \, \der \rho
  \,,
  \\
  \Dom(\mathsf{l}_\infty) &:=
  \overline{C_0^\infty\big(S^{d-1}\times(0,\infty)\big)}^{\|\cdot\|_{\mathsf{l}_\infty}}
  \,.
\end{aligned}
$$
There is a substantial difference between~$\mathsf{l}_s$ and~$\mathsf{l}_\infty$,
because~$\mathsf{A}_\infty$ is singular in the sense that
it is independent of the radial variable.
Consequently, $\Dom(\mathsf{l}_\infty)$ is smaller.
Indeed, recalling~\eqref{nu.r}, we obtain a Hardy-type inequality
$$
  \forall \phi \in C_0^\infty\big(S^{d-1}\times(0,\infty)\big)
  \,, \qquad
  \mathsf{l}_\infty[\phi] \geq
  \nu_B(\infty)
  \int_{S^{d-1}\times(0,\infty)}
  \frac{|\phi|^2}{\rho^2} \,
  \rho^{d-1} \;\! \der\sigma \, \der \rho
  \,,
$$
from which it follows that
\begin{equation}\label{implication}
  \nu_B(\infty) \not= 0
  \qquad \Longrightarrow \qquad
  \Dom(\mathsf{l}_\infty) =
  \left\{
  \phi \in \Dom(\mathsf{l}) \ \left| \
  \frac{\phi}{r} \in \mathcal{H}
  \right.
  \right\}
  \,.
\end{equation}
On the other hand, since the vector potential~$\mathsf{A}_\infty$
can be gauged out if $\nu_B(\infty)=0$, \cf~Proposition~\ref{Prop.eq.r},
we have $\Dom(\mathsf{l}_\infty) = \Dom(\mathsf{l})$ in this case.
In any case, recalling Proposition~\ref{Prop.compact},
we can easily deduce
that~$\mathsf{L}_\infty$ is an operator with compact resolvent.

The following theorem is probably the most important
auxiliary result of this paper.
\begin{Theorem}\label{Prop.strong}
Let $d \geq 2$. Suppose that~$B$ is smooth, closed and compactly supported.
If $\nu_B(\infty) \not= 0$, then the operator~$\mathsf{L}_s$ converges
to~$\mathsf{L}_\infty$ in the norm-resolvent sense as $s \to \infty$, \ie,
\begin{equation}\label{nrs}
  \lim_{s \to \infty}
  \big\|\mathsf{L}_s^{-1}-\mathsf{L}_\infty^{-1}\big\|_{\mathcal{H}\to\mathcal{H}}
  = 0
  \,.
\end{equation}
\end{Theorem}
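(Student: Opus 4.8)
The strategy is to bring the convergence into the abstract framework of Lemma~\ref{Lem.nrs}, so that the analytic work reduces to a handful of a priori estimates for the forms $\mathsf{l}_s$ and $\mathsf{l}_\infty$. Three structural facts are available essentially for free. First, the form domains are nested, $\Dom(\mathsf{l}_\infty) \subseteq \Dom(\mathsf{l}_s) = \Dom(\mathsf{l})$, the equality by Lemma~\ref{Lem.norm.eq} (recall that~$\mathsf{A}_s$ is bounded for each finite~$s$ by~\eqref{A.vanish}) and the inclusion by~\eqref{implication}, which uses the hypothesis $\nu_B(\infty)\not=0$. Second, $\mathsf{L}_\infty$ has compact resolvent, all the operators $\mathsf{L}_s$ are bounded below by~$1/2$ (Propositions~\ref{Prop.compact} and~\ref{Prop.bound}), so their inverses are uniformly bounded by~$2$ on~$\mathcal{H}$. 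Third, and crucially, $\mathsf{l}_s$ and $\mathsf{l}_\infty$ differ \emph{only} through the angular magnetic term, and the scaling identity~\eqref{vectorscaled.spherical} together with the compact-support property~\eqref{compact} shows that $\mathsf{A}_s(\sigma,\rho)=\mathsf{A}(\sigma,\Euler^{s/2}\rho)$ already coincides with $\mathsf{A}_\infty(\sigma)$ as soon as $\Euler^{s/2}\rho\ge R$. Writing $\Omega_s := \{(\sigma,\rho) \,|\, \rho \le R\,\Euler^{-s/2}\}$, we therefore have $\mathsf{A}_s-\mathsf{A}_\infty = 0$ outside the \emph{shrinking} spherical shell~$\Omega_s$, on which moreover $|\mathsf{A}_s|_{S^{d-1}}$, $|\mathsf{A}_\infty|_{S^{d-1}}$ and the angular derivatives of $\mathsf{A}_s-\mathsf{A}_\infty$ are bounded uniformly in~$s$. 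Expanding the magnetic squares, one gets for $v\in\Dom(\mathsf{l}_\infty)$ and $\psi\in\Dom(\mathsf{l}_s)$ that $\mathsf{l}_s(v,\psi)-\mathsf{l}_\infty(v,\psi)$ is an integral over~$\Omega_s$ whose integrand is a finite sum of terms, each carrying one factor $\mathsf{A}_s-\mathsf{A}_\infty$ and only first-order (covariant) angular derivatives of~$v$ and~$\psi$.

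The next step is to establish the two uniform localisation estimates that make the abstract criterion applicable. At large~$\rho$, the harmonic-oscillator potential $\rho^2/16$ present in every~$\mathsf{l}_s$ forces $\mathsf{l}_s[\phi] \ge (M^2/16)\int_{\{\rho\ge M\}} |\phi|^2 \, \rho^{d-1}\,\der\sigma\,\der\rho$ uniformly in~$s$; this is the usual mechanism behind Proposition~\ref{Prop.compact}. At small~$\rho$ the decisive input is Theorem~\ref{Thm.Hardy.LW} applied to the rescaled magnetic potential~$A_s$: its field is $B_s(y)=\Euler^{s}B(\Euler^{s/2}y)$, whose angular datum at infinity is unchanged, so that $\nu_{B_s}(\infty)=\nu_B(\infty)\not=0$, and tracking the constant produced in the proof of Theorem~\ref{Thm.Hardy.LW} gives $\tilde c_{d,B_s}\ge\nu_B(\infty)/2>0$ \emph{independently of~$s$}. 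Transported through the unitary~$\mathcal{U}$, this yields the $s$-uniform Hardy-type bound
$$
  \mathsf{l}_s[\phi] \ \ge\ \frac{\nu_B(\infty)}{2}\int_{S^{d-1}\times(0,\infty)} \frac{|\phi|^2}{1+\rho^2}\,\rho^{d-1}\,\der\sigma\,\der\rho
$$
for all large~$s$, an equi-coercivity which, combined with the diamagnetic inequality~\eqref{diamagnetic}, also gives uniform bounds on $\|\mathsf{L}_s^{-1}\|$ and on the ``kinetic'' quantities $\|(-i\nabla-\mathsf{A}_s)\mathsf{L}_s^{-1}\|$ and $\|\rho^{-1}(\der'-i\mathsf{A}_s)\mathsf{L}_s^{-1/2}\|$.

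With these in hand one runs the comparison of resolvents through Lemma~\ref{Lem.nrs}. Strong resolvent convergence is immediate: on the core $C_0^\infty\big(S^{d-1}\times(0,\infty)\big)$ every fixed test function is supported in $\{\rho\ge R\,\Euler^{-s/2}\}$ for~$s$ large, whence $\mathsf{l}_s[\phi]=\mathsf{l}_\infty[\phi]$ eventually. The upgrade to norm convergence is where the compactness of~$\mathsf{L}_\infty^{-1}$ enters, via the elementary fact that a compact operator composed with the multiplication operators~$\chi_{\Omega_s}$ --- which tend to~$0$ strongly as $s\to\infty$ since $|\Omega_s|\to0$ --- converges to~$0$ in operator norm. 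Concretely, for $f,h\in\mathcal{H}$ one writes the resolvent difference by testing the defect form against $v:=\mathsf{L}_\infty^{-1}f$ (which lies in $\Dom(\mathsf{l}_\infty)\subseteq\Dom(\mathsf{l}_s)$, so the pairing is legitimate) and against $w:=\mathsf{L}_s^{-1}h$, using the regularity of~$v$ as an element of $\Dom(\mathsf{L}_\infty)$ to integrate by parts on $S^{d-1}$; then one estimates each of the finitely many $\Omega_s$-terms by a Cauchy--Schwarz split arranged so that the $v$-factor always appears as $\chi_{\Omega_s}\!\cdot\!Kf$ with $K\in\{\mathsf{L}_\infty^{-1},\ (\der'-i\mathsf{A}_\infty)\mathsf{L}_\infty^{-1}/\rho,\dots\}$ compact (hence $o(1)\|f\|$ uniformly), while the complementary $w$-factor is controlled uniformly in~$s$ by the equi-coercive bounds above. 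Summing the terms gives $\|\mathsf{L}_s^{-1}-\mathsf{L}_\infty^{-1}\|_{\mathcal{H}\to\mathcal{H}}\to0$.

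The main obstacle is precisely this mismatch of the form domains, forced by the radial singularity of~$\mathsf{A}_\infty$: $\Dom(\mathsf{l}_\infty)$ is \emph{strictly} smaller than $\Dom(\mathsf{l}_s)=\Dom(\mathsf{l})$, and the defect form $\mathsf{l}_s-\mathsf{l}_\infty$ is \emph{not} uniformly small on the unit ball of $\|\cdot\|_{\mathsf{l}_\infty}$ --- a function concentrated like $\rho^{\sqrt{\nu_B(\infty)}}$ near the origin keeps a non-vanishing proportion of its energy inside~$\Omega_s$ for every~$s$. Consequently no naive perturbation bound of the type $|\mathsf{l}_s[\phi]-\mathsf{l}_\infty[\phi]|\le\delta(s)\,\|\phi\|_{\mathsf{l}_\infty}^2$ can hold, and the genuinely operator-theoretic compactness argument (which is exactly what Lemma~\ref{Lem.nrs} packages) is unavoidable. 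The delicate point in carrying it out is to organise the Cauchy--Schwarz splittings of the $\Omega_s$-integral so that the ``bad'' $w$-factors --- above all $\psi/\rho$, which is \emph{not} controlled by $\Dom(\mathsf{l}_s)$ near the origin --- never appear on their own: this is possible because each defect term carries a matching compact, $o(1)$, $\Omega_s$-localised $v$-factor, and because the hypothesis $\nu_B(\infty)>0$ is exactly what keeps all the quantitative estimates (in particular the ones coming from Theorem~\ref{Thm.Hardy.LW}) from degenerating as $s\to\infty$.
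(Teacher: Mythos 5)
Your overall frame is right in two respects: the proof does go through the abstract criterion of Lemma~\ref{Lem.nrs}, and the decisive analytic input is indeed the logarithm-free Hardy inequality of Theorem~\ref{Thm.Hardy.LW} applied to the rescaled potential, which after the unitary $\mathcal{U}$ gives the $s$-uniform bound
$\int |\phi_s|^2 (\Euler^{-s}+\rho^2)^{-1}\rho^{d-1}\,\der\sigma\,\der\rho \leq 2/\tilde{c}_{d,B}$
for $\phi_s=\mathsf{L}_s^{-1}f_s$; this is exactly how the paper obtains the uniform control on the angular derivative. But the mechanism you propose for concluding --- writing $\big(h,(\mathsf{L}_s^{-1}-\mathsf{L}_\infty^{-1})f\big)$ as the defect form $(\mathsf{l}_\infty-\mathsf{l}_s)$ tested on $w=\mathsf{L}_s^{-1}h$ and $v=\mathsf{L}_\infty^{-1}f$ and estimating the $\Omega_s$-integrals by Cauchy--Schwarz --- has a genuine gap, and you have in fact put your finger on it without resolving it. First, the identity itself presupposes $\mathsf{L}_s^{-1}h\in\Dom(\mathsf{l}_\infty)$, which fails in general since $\Dom(\mathsf{l}_\infty)\subsetneq\Dom(\mathsf{l}_s)$ when $\nu_B(\infty)\neq0$. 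Second, and more seriously, the cross term in which the covariant angular derivative falls on~$v$, namely $\int_{\Omega_s}\overline{(\mathsf{A}_\infty-\mathsf{A}_s)w}\cdot(\der'-i\mathsf{A}_\infty)v\,\rho^{-2}$, cannot be split as you prescribe: the only $s$-uniform control on~$w$ near the origin is $\|w/\sqrt{\Euler^{-s}+\rho^2}\|\leq C$, and on $\Omega_s=\{\rho\leq R\Euler^{-s/2}\}$ the conversion factor $\sqrt{\Euler^{-s}+\rho^2}/\rho$ is unbounded, so $\|w/\rho\|_{L^2(\Omega_s)}$ is not controlled (it need not even be finite for fixed~$s$ in $d=2$); putting the full $\rho^{-2}$ on the $v$-side instead requires $v/\rho^2\in\mathcal{H}$ or $(\der'-i\mathsf{A}_\infty)v/\rho^2\in\mathcal{H}$, which fails because elements of $\Dom(\mathsf{L}_\infty)$ behave like $\rho^{\sqrt{\nu_B(\infty)}-(d-2)/2}$ near the origin with $\sqrt{\nu_B(\infty)}\leq\beta\leq 1/2$. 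Integrating by parts on $S^{d-1}$ only trades this for $\int_{\Omega_s}\overline{w}\,(\divergence'(\mathsf{A}_\infty-\mathsf{A}_s))\,v\,\rho^{-2}$, which suffers from the same obstruction. So no arrangement of the splittings keeps the ``bad'' factors off the page: the defect is supported precisely where the uniform Hardy weight degenerates from $\rho^{-2}$ to $\Euler^{s}$.

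The paper circumvents this by never pairing $\Dom(\mathsf{l}_s)$ against $\Dom(\mathsf{l}_\infty)$ through the defect. It verifies the hypothesis of Lemma~\ref{Lem.nrs} directly: for $f_s\rightharpoonup f$ with $\|f_s\|=1$ (note you must handle a weakly convergent \emph{family} on the left, not a fixed $f$), one sets $\phi_s=\mathsf{L}_s^{-1}f_s$, derives from $\mathsf{l}_s[\phi_s]\leq\|\phi_s\|$ and the uniform Hardy bound that $\{\phi_s\}$ is bounded in $\Dom(\mathsf{l})$, extracts a subsequence converging weakly in $\Dom(\mathsf{l})$ and --- by the compact embedding $\Dom(\mathsf{l})\hookrightarrow\mathcal{H}$ --- strongly in $\mathcal{H}$, shows the limit $\phi_\infty$ satisfies $\phi_\infty/\rho\in\mathcal{H}$ (hence lies in $\Dom(\mathsf{l}_\infty)$ by~\eqref{implication}) via the weak convergence of $\phi_{s_j}/\sqrt{\Euler^{-s_j}+\rho^2}$, and finally identifies $\phi_\infty=\mathsf{L}_\infty^{-1}f$ by passing to the limit in the resolvent equation tested against $v\in C_0^\infty\big(S^{d-1}\times(0,\infty)\big)$ only --- for such~$v$ the supports avoid the origin, so the defect on $\supp v$ vanishes for large~$s$ and the problematic $\Omega_s$-integrals never arise. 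The compactness of $\mathsf{L}_\infty^{-1}$ enters through the compact embedding and the uniqueness of the weak limit point, not through $\chi_{\Omega_s}K\to0$ in norm. To repair your argument you would essentially have to replace the quantitative defect estimate by this weak-limit extraction.
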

\begin{proof}
First of all, we notice that it follows from Proposition~\ref{Prop.bound}
that~$0$ belongs to the resolvent set of~$\mathsf{L}_s$ for all $s \geq 0$
and the same argument applies to~$\mathsf{L}_\infty$.
To prove the uniform convergence~\eqref{nrs},
we shall use an abstract criterion of Lemma~\ref{Lem.nrs} from the appendix.
Let $\{s_n\}_{n \in \Nat}$ be an arbitrary
sequence of non-negative integers such that $s_n \to \infty$ as $n \to \infty$
and let $\{f_n\}_{n \in \Nat} \subset \mathcal{H}$
be an arbitrary family of functions
weakly converging to~$f$ and such that
$\|f_n\|_{\mathcal{H}}=1$ for all $n \in \Nat$.
By Lemma~\ref{Lem.nrs}, it is enough to show that
\begin{equation}\label{want}
  \lim_{n \to \infty}
  \big\|\mathsf{L}_{s_n}^{-1} f_n-\mathsf{L}_\infty^{-1}f\big\|_{\mathcal{H}}
  = 0
  \,.
\end{equation}

We set $\phi_n := \mathsf{L}_{s_n}^{-1} f_n$, so that~$\phi_n$
satisfies the weak formulation of the resolvent equation
\begin{equation}\label{re}
  \forall v \in \Dom(\mathsf{l}) \,, \qquad
  \mathsf{l}_{s_n}(v,\phi_n)
  = (v,f_n)_{\mathcal{H}}
  \,.
\end{equation}
Choosing $v = \phi_n$ for the test function in~\eqref{re},
we have
\begin{equation}\label{resolvent.identity}
  \mathsf{l}_{s_n}[\phi_n]
  = (\phi_n,f_n)_{\mathcal{H}}
  \leq \|\phi_n\|_{\mathcal{H}} \|f_n\|_{\mathcal{H}}
  = \|\phi_n\|_{\mathcal{H}}
  \,.
\end{equation}

Noticing that Proposition~\ref{Prop.bound} yields
the Poincar\'e-type inequality
$\mathsf{l}_{s}[\phi] \geq \frac{1}{2} \|\phi\|_\mathcal{H}^2$
for any $\phi \in \Dom(\mathsf{l})$,
we obtain from~\eqref{resolvent.identity}
the uniform bound
\begin{equation}\label{bound0}
  \|\phi_n\|_{\mathcal{H}} \leq 2
  \,.
\end{equation}
At the same time, recalling~\eqref{ls.spherical},
the bounds~\eqref{resolvent.identity} and~\eqref{bound0} yield
\begin{equation}\label{bounds}
  \left\|
  \frac{\big|(\der' - i\mathsf{A}_{s_n})\phi_n\big|_{S^{d-1}}}{\rho}
  \right\|_{\mathcal{H}}^2
  \leq 2
  \,, \qquad
  \left\|
  \rho^{-(d-2)/2} \left(\rho^{(d-2)/2} \phi_n\right)_{\!, \rho}
  \right\|_{\mathcal{H}}^2
  \leq 2
  \,, \qquad
  \|\rho\phi_n\|_{\mathcal{H}}^2 \leq 32
  \,.
\end{equation}

If $B=0$, then $\mathsf{A}=0$, and consequently,
the first inequality of~\eqref{bounds}
is already a uniform bound on the angular derivative of~$\phi_n$,
so that~\eqref{bound0} and~\eqref{bounds} imply that $\{\phi_n\}_{n \in \Nat}$
is a bounded family in~$\Dom(\mathsf{l})$, \cf~\eqref{Doml}.
To get a similar estimate on the angular derivative of~$\phi_n$
in our situation $\nu_B(\infty) \not= 0$,
we employ the presence of magnetic Hardy inequalities as follows.
In the simultaneous usage of the spherical coordinates~\eqref{spherical},
self-similarity variables \eqref{relationship}--\eqref{relationship.spherical}
and the Poincar\'e gauge~\eqref{Biot-Savart},
the Hardy inequality~\eqref{Hardy.magnet.LW} of Theorem~\ref{Thm.Hardy.LW}
reads
\begin{equation*}
  \left\|
  \frac{\big|(\der' - i\mathsf{A}_{s})\phi\big|_{S^{d-1}}}{\rho}
  \right\|_{\mathcal{H}}^2
  + \left\|
  \rho^{-(d-2)/2} \left(\rho^{(d-2)/2} \phi\right)_{\!, \rho}
  \right\|_{\mathcal{H}}^2
  \geq \tilde{c}_{d,B} 	
  \int_{S^{d-1}\times(0,\infty)} \frac{|\phi|^2}
  {\Euler^{-{s}}+ \rho^2} \,
  \rho^{d-1} \;\! \der\sigma \, \der \rho
\end{equation*}
for any $\phi \in C_0^\infty\big(S^{d-1}\times(0,\infty)\big)$.
Applying this inequality to~\eqref{resolvent.identity}
and recalling~\eqref{bound0}, we therefore get
\begin{equation}\label{ri3}
  \int_{S^{d-1}\times(0,\infty)} \frac{|\phi_n|^2}
  {\Euler^{-s_n}+ \rho^2} \,
  \rho^{d-1} \;\! \der\sigma \, \der \rho
  \leq \frac{2}{\tilde{c}_{d,B}}
  \,.
\end{equation}
Consequently,
\begin{equation}\label{C1}
  \left\|
  \frac{|\mathsf{A}_{s_n}\phi_n|_{S^{d-1}}}{\rho}
  \right\|_{\mathcal{H}}^2
  \leq \frac{2}{\tilde{c}_{d,B}} \, C
  \,,
\end{equation}
where the number
$$
  C := \sup_{(\sigma,r) \in S^{d-1} \times (0,\infty)}
  \frac{1 + r^2}{r^2} \,
  |\mathsf{A}(\sigma,r)|_{S^{d-1}}^2
$$
is indeed a finite constant because
$|\mathsf{A}(\sigma,r)|_{S^{d-1}} = \mathcal{O}(r)$ as $r \to 0$
as a consequence of~\eqref{passing}
and $\mathsf{A}(\sigma,r)$
equals the smooth vector field $\mathsf{A}_\infty(\sigma)$
for all sufficiently large~$r$.
Using~\eqref{C1} in the first term of~\eqref{bounds},
we eventually get the desired estimate
\begin{equation}\label{bound.angular}
  \left\|
  \frac{|\der'\phi_n|_{S^{d-1}}}{\rho}
  \right\|_{\mathcal{H}}^2
  \leq \tilde{C}
  \,,
\end{equation}
where~$\tilde{C}$ is a constant independent of~$n$.

It follows from~\eqref{bound0}, \eqref{bounds} and~\eqref{bound.angular}
that $\{\phi_n\}_{n \in \Nat}$ is a bounded family in $\Dom(\mathsf{l})$
equipped with the norm $\|\cdot\|_{\mathsf{l}}$.
Therefore it is precompact in the weak topology of this space.
Let~$\phi_\infty$ be a weak limit point,
\ie, for a sequence $\{n_j\}_{j\in\Nat}$ of non-negative integers
such that $n_j \to \infty$ as $j \to \infty$,
$\{\phi_{n_j}\}_{j\in\Nat}$ converges weakly to~$\phi_\infty$ in~$\Dom(\mathsf{l})$.
Actually, we may assume that the sequence converges strongly in~$\mathcal{H}$
because~$\Dom(\mathsf{l})$ is compactly embedded in~$\mathcal{H}$.
Summing up,
\begin{equation}\label{weak1}
  \phi_{n_j} \xrightarrow[j \to \infty]{w} \phi_\infty
  \quad \mbox{in} \quad \Dom(\mathsf{l})
  \qquad \mbox{and} \qquad
  \phi_{n_j} \xrightarrow[j \to \infty]{} \phi_\infty
  \quad \mbox{in} \quad \mathcal{H}
  \,.
\end{equation}

Our next objective is to show that $\phi_\infty \in \Dom(\mathsf{l}_\infty)$.
Since $\{\phi_{n_j}\}_{j\in\Nat}$ converges strongly to~$\phi_\infty$ in~$\mathcal{H}$,
we clearly have
$$
  \forall \phi \in C_0^\infty\big(S^{d-1}\times(0,\infty)\big)
  \,, \qquad
  \left(\phi,
  \frac{\phi_{n_j}}{\sqrt{\Euler^{-{s_{n_j}}}+\rho^2}}\right)_{\!\mathcal{H}}
  \xrightarrow[j\to\infty]{}
  \left(\phi,
  \frac{\phi_\infty}{\rho}\right)_{\!\mathcal{H}}
  \,.
$$
Since $C_0^\infty\big(S^{d-1}\times(0,\infty)\big)$ is dense in~$\mathcal{H}$
and the uniform bound~\eqref{ri3} holds true,
we may conclude that
\begin{equation}\label{weak2}
  \frac{\phi_{n_j}}{\sqrt{\Euler^{-s_{n_j}}+\rho^2}}
  \ \xrightarrow[j\to\infty]{w} \
  \frac{\phi_\infty}{\rho}
  \,\quad\mbox{in}\quad
  \mathcal{H}
  \,.
\end{equation}
In particular, recalling~\eqref{implication},
$\phi_\infty \in \Dom(\mathsf{l}_\infty)$.

Now we have all the ingredients needed to pass
to the limit as $n \to \infty$ in~\eqref{re}.
Taking any test function $v \in C_0^\infty\big(S^{d-1}\times(0,\infty)\big)$
in~\eqref{re}, with~$n$ being replaced by~$n_j$,
and sending~$j$ to infinity, we easily obtain from~\eqref{weak1} the identity
\begin{equation}\label{identity}
  \mathsf{l}_\infty(v,\phi_\infty) = (v,f)_\mathcal{H}
  \,.
\end{equation}
Since $C_0^\infty\big(S^{d-1}\times(0,\infty)\big)$
is a core of~$\mathsf{l}_\infty$,
then \eqref{identity} holds true for any $v\in \Dom(\mathsf{l}_\infty)$.
We conclude that $\phi_\infty = \mathsf{L}_\infty^{-1} f$,
for \emph{any} weak limit point of $\{\phi_n\}_{n \in \Nat}$.
From the strong convergence of $\{\phi_{n_j}\}_{j\in\Nat}$,
we eventually conclude with~\eqref{want}.
Recalling Lemma~\ref{Lem.nrs}, the theorem is proved.
\end{proof}
\begin{Remark}
In the two-dimensional situation studied in~\cite{K7},
only the strong-resolvent convergence of the operators was proved.
\end{Remark}
\begin{Remark}
We emphasise that the usage of the magnetic Hardy inequality~\eqref{Hardy.magnet.LW}
of Theorem~\ref{Thm.Hardy.LW} \emph{without} the logarithm
is crucial in the preceding proof.
\end{Remark}

\subsection{Spectral consequences: proof of Theorem~\ref{Thm.main}}
As a consequence of Theorem~\ref{Prop.strong},
the spectrum of~$\mathsf{L}_s$ converges
to the spectrum of~$\mathsf{L}_\infty$ as $s \to \infty$.
As for the latter, we have, in analogy with Proposition~\ref{Prop.Laguerre},
\begin{Proposition}\label{Prop.Laguerre.infinity}
Let $d \geq 2$. Suppose that~$B$ is smooth, closed and compactly supported.
We have
$$
  \sigma(L_\infty) =
  \left\{n+\frac{1+\sqrt{\nu_{B,\ell}(\infty)}}{2}\right\}_{n,\ell \in \Nat}
  \,,
$$
where $\{\nu_{B,\ell}(\infty)\}_{\ell \in \Nat}$ is the set of eigenvalues
of the operator $\big(-i \nabla_\sigma-\mathsf{A}_\infty(\sigma)\big)^2$
in $\sii(S^{d-1})$.
\end{Proposition}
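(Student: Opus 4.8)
The plan is to prove this by separation of variables, mirroring the proof of Proposition~\ref{Prop.Laguerre}; the whole point is that the limiting potential $\mathsf{A}_\infty=\mathsf{A}_\infty(\sigma)$ is independent of the radial variable~$\rho$, so the form $\mathsf{l}_\infty$ decouples into radial and angular parts exactly as in the magnetic-free case.

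First I would diagonalise the angular part. Since $S^{d-1}$ is compact, the magnetic Laplace--Beltrami operator $\big(-i\nabla_\sigma-\mathsf{A}_\infty(\sigma)\big)^2$ in $\sii(S^{d-1})$ has compact resolvent, hence a purely discrete spectrum $\{\nu_{B,\ell}(\infty)\}_{\ell\in\Nat}$ (arranged increasingly), with finite-dimensional eigenspaces $\mathcal{V}_{B,\ell}$ spanned by smooth eigenfunctions. This yields the orthogonal decomposition
$$
  \mathcal{H}\simeq\bigoplus_{\ell=0}^\infty\Big(\sii\big((0,\infty),\rho^{d-1}\,\der\rho\big)\otimes\mathcal{V}_{B,\ell}\Big),
$$
and, because $\mathsf{A}_\infty$ does not depend on~$\rho$, the form $\mathsf{l}_\infty$ respects it, so that $\mathsf{L}_\infty\simeq\bigoplus_{\ell}(L_{B,\ell}\otimes 1)$ with
$$
  L_{B,\ell}:=-\frac{1}{\rho^{d-1}}\frac{\der}{\der\rho}\rho^{d-1}\frac{\der}{\der\rho}+\frac{\nu_{B,\ell}(\infty)-c_d}{\rho^2}+\frac{\rho^2}{16}
$$
the radial operator in $\sii\big((0,\infty),\rho^{d-1}\,\der\rho\big)$ generated by the closure of the analogous quadratic form on $C_0^\infty((0,\infty))$. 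Then I would apply the Hardy substitution $g:=\rho^{(d-2)/2}\psi$ as in~\eqref{unitary.Weidl}, which turns each $L_{B,\ell}$ into
$$
  \tilde{L}_{B,\ell}:=-\frac{1}{\rho}\frac{\der}{\der\rho}\rho\frac{\der}{\der\rho}+\frac{\nu_{B,\ell}(\infty)}{\rho^2}+\frac{\rho^2}{16}\qquad\mbox{in}\qquad\sii\big((0,\infty),\rho\,\der\rho\big).
$$
Since $\nu_{B,\ell}(\infty)\geq 0$ (the magnetic Laplace--Beltrami operator being non-negative), the spectrum of $\tilde{L}_{B,\ell}$ equals $\{n+(1+\sqrt{\nu_{B,\ell}(\infty)})/2\}_{n\in\Nat}$, with the Laguerre-type eigenfunctions~\eqref{Laguerre} (with $\nu_\ell$ replaced by $\nu_{B,\ell}(\infty)$), by the explicit computation with special functions in \cite[Prop.~3]{K7}. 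Taking the union of these spectra over $\ell\in\Nat$ gives the claimed identity for $\sigma(\mathsf{L}_\infty)$.

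The part requiring care is making the decoupling rigorous at the level of the form domains. When $\nu_B(\infty)=0$ one has $\Dom(\mathsf{l}_\infty)=\Dom(\mathsf{l})$ and the argument is literally that of Proposition~\ref{Prop.Laguerre}. When $\nu_B(\infty)\neq 0$, $\Dom(\mathsf{l}_\infty)$ is the strictly smaller space identified in~\eqref{implication}; here I would check that projecting $C_0^\infty\big(S^{d-1}\times(0,\infty)\big)$ onto each $\mathcal{V}_{B,\ell}$ still produces a core of $\tilde{l}_{B,\ell}$, so that the direct-sum decomposition of the operator genuinely matches that of the form. A related subtlety is the nature of the singular endpoint $\rho=0$: if $\nu_{B,\ell}(\infty)\in[0,1)$, which does occur for $\ell=0$ in $d=2$ with fractional flux (cf.~\eqref{d.2}), the radial operator $\tilde{L}_{B,\ell}$ is in the limit-circle case at~$0$, so one must verify that the $C_0^\infty$-closure selects the Friedrichs extension; this is precisely the point settled in \cite[Prop.~3]{K7}, whence the spectral formula transfers without change. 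I expect this bookkeeping, rather than any new idea, to be the only genuine obstacle.
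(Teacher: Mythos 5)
Your proposal follows essentially the same route as the paper, which simply declares the proof identical to that of Proposition~\ref{Prop.Laguerre} with the eigenvalues $\nu_\ell$ of the Laplace--Beltrami operator replaced by the eigenvalues $\nu_{B,\ell}(\infty)$ of the magnetic operator $\big(-i\nabla_\sigma-\mathsf{A}_\infty(\sigma)\big)^2$. Your additional remarks on the form-domain bookkeeping and the limit-circle issue at $\rho=0$ are sensible refinements of the same argument, not a different approach.
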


We note that the lowest eigenvalue coincides with $\nu_B(\infty)$.
As a consequence of Theorem~\ref{Prop.strong}
and Proposition~\ref{Prop.Laguerre.infinity}
together with Proposition~\ref{Prop.log} and \eqref{d.high}, we obtain
\begin{Corollary}\label{Corol.strong}
Let $d \geq 2$.
Suppose that~$B$ is smooth, closed and compactly supported.
Then
$$
  \lambda_B(\infty) = \frac{1+\sqrt{\nu_{B}(\infty)}}{2}
  \,.
$$
\end{Corollary}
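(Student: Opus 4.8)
The plan is to distinguish two cases according to whether $\nu_B(\infty)$ vanishes, mirroring the spectral dichotomy between $d=2$ and $d\ge 3$. If $\nu_B(\infty)=0$ --- which by~\eqref{d.high} is automatically the case whenever $d\ge 3$ --- then Proposition~\ref{Prop.log} yields at once $\lambda_B(\infty)=1/2$, and since $\sqrt{\nu_B(\infty)}=0$ this is precisely the desired identity $\lambda_B(\infty)=\frac{1+\sqrt{\nu_B(\infty)}}{2}$. Hence the only substantive situation is $\nu_B(\infty)\ne 0$, for which Theorem~\ref{Prop.strong} applies.

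So suppose $\nu_B(\infty)\ne 0$. First I would pass to the unitarily equivalent operators $\mathsf{L}_s=\mathcal{U}L_s\mathcal{U}^{-1}$, so that $\lambda_B(s)$ is also the lowest point of $\sigma(\mathsf{L}_s)$; recall that each $\mathsf{L}_s$, as well as $\mathsf{L}_\infty$, has compact resolvent (cf.\ Proposition~\ref{Prop.compact}), and that $\inf\sigma(\mathsf{L}_s)\ge 1/2$ and $\inf\sigma(\mathsf{L}_\infty)\ge 1/2$ by Proposition~\ref{Prop.bound} (as noted at the start of the proof of Theorem~\ref{Prop.strong}). In particular $0$ lies in the resolvent set of all of them, and positivity together with discreteness of the spectrum give $\|\mathsf{L}_s^{-1}\|_{\mathcal{H}\to\mathcal{H}}=\lambda_B(s)^{-1}$ and $\|\mathsf{L}_\infty^{-1}\|_{\mathcal{H}\to\mathcal{H}}=\lambda_0(\mathsf{L}_\infty)^{-1}$, where $\lambda_0(\mathsf{L}_\infty):=\inf\sigma(\mathsf{L}_\infty)$.

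The next step is to read off the convergence of the bottom eigenvalue from the norm-resolvent convergence~\eqref{nrs}. Since $\big|\,\|\mathsf{L}_s^{-1}\|-\|\mathsf{L}_\infty^{-1}\|\,\big|\le\|\mathsf{L}_s^{-1}-\mathsf{L}_\infty^{-1}\|$, \eqref{nrs} forces $\lambda_B(s)^{-1}\to\lambda_0(\mathsf{L}_\infty)^{-1}$, hence $\lambda_B(s)\to\lambda_0(\mathsf{L}_\infty)$ as $s\to\infty$; in particular the liminf defining $\lambda_B(\infty)$ is an actual limit equal to $\lambda_0(\mathsf{L}_\infty)$. Finally, Proposition~\ref{Prop.Laguerre.infinity} gives $\sigma(\mathsf{L}_\infty)=\{n+\frac{1+\sqrt{\nu_{B,\ell}(\infty)}}{2}\}_{n,\ell\in\Nat}$; the minimum is attained at $n=0$, $\ell=0$ because $n\mapsto n$ is increasing and $\nu_{B,0}(\infty)=\min_\ell\nu_{B,\ell}(\infty)$ coincides with $\nu_B(\infty)$ by the definitions~\eqref{nu.r} and~\eqref{limits.nu}. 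Therefore $\lambda_0(\mathsf{L}_\infty)=\frac{1+\sqrt{\nu_B(\infty)}}{2}$, which is the claim.

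The only point requiring any care is the extraction of the convergence of the lowest eigenvalue from the operator-norm convergence of the resolvents; but because every operator involved is positive with $0$ in its resolvent set, this reduces to the continuity of the operator norm, so no genuine obstacle remains at this stage --- the real work of the section has already been carried out in Theorem~\ref{Prop.strong} and Proposition~\ref{Prop.log}.
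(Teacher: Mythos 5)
Your proposal is correct and follows essentially the same route as the paper: the case $\nu_B(\infty)=0$ is handled by Proposition~\ref{Prop.log}, and the case $\nu_B(\infty)\neq 0$ by combining the norm-resolvent convergence of Theorem~\ref{Prop.strong} with the explicit spectrum of $\mathsf{L}_\infty$ from Proposition~\ref{Prop.Laguerre.infinity}. Your explicit extraction of the lowest eigenvalue via $\|\mathsf{L}_s^{-1}\|_{\mathcal{H}\to\mathcal{H}}=\lambda_B(s)^{-1}$ is a clean, valid way to make precise the spectral convergence that the paper invokes implicitly.
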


Recalling Proposition~\ref{Prop.reduction}.(i),
we have thus established the following result.
\begin{Theorem}\label{Thm.main.bis}
Let $d \geq 2$.
Suppose that~$B$ is smooth, closed and compactly supported.
If $\nu_B(\infty) \not= 0$, then there exists a positive constant~$\alpha_{d,B}$
such that
$$
  \big\|\Euler^{-t H_B}\big\|_{\sii_w(\Real^d)\to\sii(\Real^d)}
  \leq
  (1+t)^{-(\alpha_{d,B}+1/2)}
  \,.
$$
\end{Theorem}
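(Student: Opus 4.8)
The plan is to deduce the statement directly from Proposition~\ref{Prop.reduction}.(i), which already gives
\[
  \big\|\Euler^{-t H_B}\big\|_{\sii_w(\Real^d)\to\sii(\Real^d)}
  \leq (1+t)^{-\lambda_B^\mathrm{min}}
  \,, \qquad
  \lambda_B^\mathrm{min} := \inf_{s \geq 0} \lambda_B(s)
  \,.
\]
Hence it is enough to prove the \emph{strict} bound $\lambda_B^\mathrm{min} > \tfrac12$; then the claim holds with $\alpha_{d,B} := \lambda_B^\mathrm{min} - \tfrac12 > 0$. So the whole proof reduces to showing that the ground-state eigenvalue $\lambda_B(s)$ of $L_s$ does not approach $\tfrac12$ anywhere on $[0,\infty)$.

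First I would note that the hypothesis $\nu_B(\infty)\not=0$ forces $B\not=0$: if $B=0$ one may take $A=0$, whence $\mathsf{A}_\infty=0$ and $\nu_B(\infty)=0$. Therefore Proposition~\ref{Prop.strict} applies and yields $\lambda_B(s) > \tfrac12$ for \emph{every} finite $s\geq 0$. Next, since $\nu_B(\infty)\not=0$, Theorem~\ref{Prop.strong} provides the norm-resolvent convergence $\mathsf{L}_s\to\mathsf{L}_\infty$, so the bottom eigenvalues converge and, combined with Corollary~\ref{Corol.strong}, $\lim_{s\to\infty}\lambda_B(s) = \lambda_B(\infty) = \tfrac{1+\sqrt{\nu_B(\infty)}}{2} > \tfrac12$. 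Fixing $\eps := \tfrac12\big(\lambda_B(\infty)-\tfrac12\big) > 0$, there is $S>0$ with $\lambda_B(s) \geq \tfrac12+\eps$ for all $s\geq S$, so the infimum over $[S,\infty)$ is already bounded away from $\tfrac12$.

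It remains to bound $\lambda_B$ away from $\tfrac12$ on the \emph{compact} interval $[0,S]$, and here I would argue by contradiction. If $\inf_{s\in[0,S]}\lambda_B(s)=\tfrac12$, choose $s_n\in[0,S]$ with $\lambda_B(s_n)\to\tfrac12$ and normalised eigenfunctions $\psi_n$ of $L_{s_n}$; after passing to a subsequence, $s_n\to s_*\in[0,S]$. From $l_{s_n}[\psi_n]=\lambda_B(s_n)$ together with the uniform estimates already exploited in the proof of Theorem~\ref{Prop.strong} (the bound $|A_s(y)|\leq R^2\|B\|_\infty/|y|$ uniform in $s$, the Hardy transform, and the harmonic-oscillator term) one gets that $\{\psi_n\}$ is bounded in $\Dom(l)$; by the compactness of $\Dom(l)\hookrightarrow\sii(\Real^d)$ from Proposition~\ref{Prop.compact}, a further subsequence converges strongly in $\sii(\Real^d)$ to some $\psi_*$ with $\|\psi_*\|_{\sii(\Real^d)}=1$ and weakly in $\Dom(l)$. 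Using $A_{s_n}\to A_{s_*}$ locally uniformly and the weak lower semicontinuity of the (non-negative) quadratic form, one obtains $l_{s_*}[\psi_*]\leq\liminf_{n}l_{s_n}[\psi_n]=\tfrac12$, which contradicts $l_{s_*}[\psi_*]\geq\lambda_B(s_*)>\tfrac12$ from Proposition~\ref{Prop.strict}. Hence $m:=\inf_{s\in[0,S]}\lambda_B(s)>\tfrac12$, and therefore $\lambda_B^\mathrm{min}\geq\min\{m,\tfrac12+\eps\}>\tfrac12$, concluding the proof.

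The main obstacle I anticipate is the lower-semicontinuity step on $[0,S]$: passing to the limit in the magnetic term $\|(\nabla-iA_{s_n})\psi_n\|_{\sii(\Real^d)}^2 - c_d\|\psi_n/|y|\|_{\sii(\Real^d)}^2$, where the \emph{subtracted} inverse-square term threatens to spoil lower semicontinuity. I would handle this exactly as in Section~\ref{Sec.Hardy}, working with the manifestly non-negative representation obtained via the substitution $g=|x|^{(d-2)/2}\psi$ and the angular/radial split $|(\der'-i\mathsf{A}_s)\phi|_{S^{d-1}}^2/\rho^2 + |\rho^{-(d-2)/2}(\rho^{(d-2)/2}\phi)_{,\rho}|^2$ appearing in~\eqref{ls.spherical} and~\eqref{Hardy.transform}; each piece is then weakly lower semicontinuous and the argument becomes routine. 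An alternative is to establish outright the continuity of $s\mapsto\lambda_B(s)$ on bounded intervals by a perturbation estimate based on Lemma~\ref{Lem.norm.eq}, which would replace the contradiction argument but confronts the same technical core.
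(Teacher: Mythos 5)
Your proof is correct and follows the same overall strategy as the paper's: reduce to the estimate of Proposition~\ref{Prop.reduction}.(i), show that $\lambda_B^{\mathrm{min}}:=\inf_{s\geq 0}\lambda_B(s)>\tfrac12$, and set $\alpha_{d,B}=\lambda_B^{\mathrm{min}}-\tfrac12$. Where you go beyond the paper is in making the middle step explicit. The paper's own proof is extremely terse --- it just cites Propositions~\ref{Prop.reduction} and~\ref{Prop.strict} and stops. But Proposition~\ref{Prop.strict} yields only the \emph{pointwise} strict inequality $\lambda_B(s)>\tfrac12$ for each finite $s$, which by itself does not force the infimum over $[0,\infty)$ to stay strictly above $\tfrac12$. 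You correctly notice this and supply the missing pieces: the hypothesis $\nu_B(\infty)\neq 0$ together with Theorem~\ref{Prop.strong} and Corollary~\ref{Corol.strong} pins the tail behaviour, $\lambda_B(s)\to\lambda_B(\infty)=\tfrac{1+\sqrt{\nu_B(\infty)}}{2}>\tfrac12$, so the infimum over $[S,\infty)$ is bounded away from $\tfrac12$ for $S$ large; and a compactness argument on $[0,S]$ (uniform bounds in $\Dom(l)$ via Lemma~\ref{Lem.norm.eq}, compact embedding from Proposition~\ref{Prop.compact}, and weak lower semicontinuity of the form) rules out the infimum being approached there. Your observation that the subtracted inverse-square term threatens the lower-semicontinuity step, and your remedy of passing to the manifestly non-negative spherical/Hardy representation of the form (exactly the representation the paper uses in~\eqref{ls.spherical} and~\eqref{Hardy.transform}), are well taken and close the argument. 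In short, you reach the same conclusion by the same route, but you make explicit a non-trivial step the paper's one-line proof leaves to the reader.
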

\begin{proof}
By virtue of Propositions~\ref{Prop.reduction} and~\ref{Prop.strict},
it is enough to choose
$
  \alpha_{d,B} := \lambda_B^\mathrm{min} - 1/2
$.
\end{proof}

We do not highlight this theorem in the introduction,
because the statement is void for $d \geq 3$
and the two-dimensional situation was already established in~\cite{K7}.

We conclude this section by completing the proof of Theorem~\ref{Thm.main}.
\begin{proof}[Proof of Theorem~\ref{Thm.main}]
By Proposition~\ref{Prop.reduction}.(ii) and Corollary~\ref{Corol.strong},
we have $\gamma_B \geq \lambda_B(\infty)$ and it remains to show that
there is actually an equality. As in Proposition~\ref{Prop.optimal},
it is enough to find an initial datum $u_0 \in \sii_w(\Real^d)$
such that the solution of~\eqref{Cauchy} satisfies the inequality
$\|u(t)\|_{\sii(\Real^d)} \geq c \, (1+t)^{-\lambda_B(\infty)}$
for all $t \geq 0$ with some positive constant~$c$
that may depend on~$u_0$.
Now we choose $u_0(x):=w(x)^{-1/2} \psi_1(x)$, where
$
  \psi_1(y) := |y|^{-(d-2)/2} |y|^{\nu_B(\infty)} \Euler^{-|y|^2/8}
$
is an eigenfunction of~$L_\infty$
corresponding to the eigenvalue~$\lambda_B(\infty)$.
By Proposition~\ref{Prop.justify}, the function~$\tilde{v}$
defined by~\eqref{ISST} and~\eqref{noweight} solves~\eqref{Cauchy.noweight}
(interpreted as~\eqref{weak.form.weight})
with the initial datum $v_0 = \psi_1$.
Let~$R$ be the radius of an open ball~$D_R$ containing the support of~$B$.
Since $\mathsf{A}_s = \mathsf{A}_\infty$ in the exterior
of a shrinking ball~$D_{R_s}$ with $R_s := \Euler^{-s/2} R$,
we have the explicit solution
$$
  \forall s \geq 0 \,, \ y \in \Real^d \setminus D_{R_s}
  \,, \qquad
  \tilde{v}(y,s) = \Euler^{-s \lambda_B(\infty)} \psi_1(y)
  \,.
$$
Recalling~\eqref{preserve} and~\eqref{noweight} again, we get
\begin{multline*}
  \|u(t)\|_{\sii(\Real^d)}
  = \|w^{-1/2}\tilde{v}(s)\|_{\sii(\Real^d)}
  \geq \|w^{-1/2}\tilde{v}(s)\|_{\sii(\Real^d\setminus D_{R_s})}
  \\
  = \Euler^{-s \lambda_B(\infty)}
  \|u_0\|_{\sii(\Real^d\setminus D_{R_s})}
  \geq \Euler^{-s \lambda_B(\infty)}
  \|u_0\|_{\sii(\Real^d\setminus D_{R})}
  =(1+t)^{-\lambda_B(\infty)} \, \|u_0\|_{\sii(\Real^d\setminus D_{R})}
\end{multline*}
for all $t \geq 0$,
where the last identity follows by the relationship~\eqref{relationship}.
\end{proof}
\begin{Remark}
It follows from the precedent proof that there is
an equality in Proposition~\ref{Prop.reduction}.(ii).
\end{Remark}
%

%--------------------%
\section{Conclusions}\label{Sec.end}
%--------------------%
%
The method of this paper shows that the solutions
of the heat equation~\eqref{Cauchy} for large time behave
as if a local magnetic field~$B$ were replaced by
the singular magnetic field generated by~$\mathsf{A}_\infty$
which reflects the behaviour of the original magnetic field at the space infinity.
We proved that the asymptotic behaviour
of solutions of the heat equation,
when characterised by the polynomial decay rate~$\gamma_B$ of~\eqref{rate},
is determined by the lowest eigenvalue~$\nu_B(\infty)$
of an eigenvalue problem for the magnetic Laplace-Beltrami operator
with the vector potential~$\mathsf{A}_\infty$ on the sphere~$S^{d-1}$.

We also proved new magnetic Hardy inequalities
that played a central role in our proofs.
One approach (though not the most general)
to establish the inequalities
employed the positivity of $r \mapsto \nu_B(r)$
in the spirit of the original work of Laptev and Weidl~\cite{Laptev-Weidl_1999}.
For the appearance of cross-sectional spectral quantities
in a much more general geometric setting,
see~\cite{Mazzeo-MaOwen_2001}.

It is interesting to recall from~\cite{K7} that~$\mathsf{A}_\infty$
corresponds in $d=2$ to the Aharonov-Bohm magnetic field
with the same total flux as~$B$.
Consequently, the presence of magnetic field leads to an improvement
of the large-time decay of solutions of the heat equation~\eqref{Cauchy} in the plane,
provided that the total magnetic flux does not
belong to a discrete set of flux quanta.
Let us also mention in the two-dimensional context
the work of Kova\v{r}\'ik~\cite{Kovarik_2010},
who analysed the large-time behaviour of the heat kernel of~\eqref{semigroup}
in the case of a radially symmetric magnetic field in $d=2$
and obtained sharp two-sided estimates in this special setting.

In higher dimensions, we have $\nu_B(\infty)=0$,
so the transient effect of the magnetic field
is not observable in the present setting through
the polynomial decay rate~$\gamma_B$.
Anyway, because of the presence of magnetic Hardy inequalities,
we expect that there is always an improvement in the decay
of the heat semigroup~\eqref{semigroup} whenever $B \not= 0$.
A possible way how to show it would be to compute
the next term in the asymptotic expansion
of the eigenvalue $\lambda_B(s)$ as $s\to\infty$.

More generally, recall that we expect that there is always
an improvement of the decay rate for the heat semigroup
of an operator satisfying a Hardy-type inequality
(\cf~\cite[Conjecture in Sec.~6]{KZ1} and~\cite[Conjecture~1]{FKP}).
The present paper confirms this general conjecture in the particular case
of two-dimensional magnetic Schr\"odinger operators.

We expect the same decay rates if the assumption about the compact
support of~$B$ is replaced by a fast decay at infinity only.
However, it is quite possible that a slow decay of the field
at infinity will improve the decay of the solutions even further.
In particular, can~$\gamma_B$ be strictly greater than
$\big(1+\sqrt{\nu_B(\infty)}\big)/2$
if~$B$ decays to zero very slowly at infinity?

Throughout this paper, we restricted to smooth~$B$ and~$A$
in order to simplify their mutual relationship
and to state our results in terms of a more physical quantity~$B$.
Omitting the physical interpretation,
it is not difficult to restate our results in terms of~$A$ only,
with a much less restrictive regularity.
For instance, it is clear from the proof of Theorem~\ref{Thm.Hardy}
that it is just enough to assume that~$A$ is bounded
and not exact to get the Hardy inequality~\eqref{Hardy.magnet}.

It would be also interesting to examine the effect
of the presence of the local magnetic field in other physical models.
As one possible direction of this research,
let us mention the question of dispersive estimates
for the Schr\"odinger group generated by~\eqref{Schrodinger}
(see \cite{D'Ancona-Fanelli_2008}, \cite{D'Ancona-Fanelli-Vega-Visciglia_2010},
\cite{Fanelli-Felli-Fontelos-Primo_2013}, \cite{Grillo-Kovarik_2014},
\cite{Fanelli-Felli-Fontelos-Primo_2014}
for a recent study of related problems).

 It is well known that the large-time behaviour of a heat semigroup
is related to low-energy properties of the resolvent of its generator.
In this paper, we studied this relationship
by the method of self-similar variables applied to the heat equation
and using magnetic Hardy-type inequalities
as a characterisation of spectral-threshold properties of the generator.
An alternative approach would be to establish a Laurent expansion
of the resolvent at zero energy and apply it to the heat semigroup
with help of functional calculus.
This approach has been recently undertaken by Kova\v{r}\'ik~\cite{Kovarik_2015}
to study a large-time behaviour of the Schr\"odinger equation
in the present magnetic setting for $d=2$
(the established Laurent expansion can be used for the heat semigroup as well).
More generally, the low-energy properties of the resolvent
are subject of an intensive study in geometric scattering theory;
see \cite{Wang_2006}, \cite{Guillarmou-Hassell_2008}, \cite{Guillarmou-Hassell_2009}
and \cite{Guillarmou-Hassell-Sikora_2013}
for recent developments in a much more general geometric setting.
An extension of these results to the present magnetic case seems
to constitute an interesting open problem.

\appendix
%-----------------------------------------------------------------%
\section{An abstract criterion for the norm-resolvent convergence}\label{Sec.App}
%-----------------------------------------------------------------%
%
In this appendix we establish the following abstract result.

\begin{Lemma}\label{Lem.nrs}
Let $\{R_s\}_{s \in \Real}$ be a family of bounded operators
on a Hilbert space~$\mathfrak{H}$
and let~$R$ be a compact operator in~$\mathfrak{H}$.
Suppose that
\begin{equation}\label{uniform}
  \forall \{s_n\}_{n\in\Nat} \subset \Real \,, \
  \forall \{f_n\}_{n \in \Nat} \subset \mathfrak{H} \,,
  \quad
\left.
\begin{aligned}
  \bullet \ & s_n \xrightarrow[n \to \infty]{} \infty
  \\
  \bullet \ & f_n \xrightarrow[n \to \infty]{w} f
  \ \mbox{in} \ \mathfrak{H}
  \\
  \bullet \ & \forall n \in \Nat \,, \quad
  \|f_n\|_\mathfrak{H} = 1
\end{aligned}
\right\}
\quad\Longrightarrow\quad
  R_{s_n} f_n \xrightarrow[n \to \infty]{} R f
  \ \mbox{in} \ \mathfrak{H}
  \,.
\end{equation}
Then $\{R_s\}_{s \geq 0}$ converges to~$R$ uniformly, \ie,
\begin{equation}\label{nrs.abstract}
  \lim_{s \to \infty}
  \|R_s-R\|_{\mathfrak{H}\to\mathfrak{H}}
  = 0
\end{equation}
\end{Lemma}
\begin{proof}
We prove the lemma by contradiction.
Let us assume that~\eqref{nrs.abstract} is violated, while~\eqref{uniform} holds.
The former implies that there exist a positive number~$c$,
a sequence of numbers
$\{s_n\}_{n \in \Nat} \subset \Real$
such that $s_n \to \infty$ as $n \to \infty$
and a sequence of functions
$\{f_n\}_{n \in \Nat} \subset \mathfrak{H}$ such that
$$
  \|f_{n}\|_{\mathfrak{H}}=1
  \qquad \mbox{and} \qquad
  \big\| R_{s_n} f_{n}-R f_{n} \big\|_{\mathfrak{H}} \geq c > 0
$$
for every $n \in \Nat$.
Since the sequence $\{f_{n}\}_{n\in\Nat}$ is bounded,
it is precompact in the weak topology of~$\mathfrak{H}$.
Let~$f$ be a weak limit point, \ie,
for a sequence $\{n_j\}_{j \in \Nat} \subset \Nat$
such that $n_j \to \infty$ as $j \to \infty$,
$\{f_{n_j}\}_{j \in \Nat}$ converges weakly to~$f$ in~$\mathfrak{H}$.
Since~$R$ is compact,
the sequence of functions $\{R f_{n_j}\}_{j \in \Nat}$
converges strongly to~$R f$ in~$\mathfrak{H}$.
Consequently,
$$
  0 < c \leq
  \lim_{j \to \infty} \big\| R_{s_{n_j}} f_{n_j}-R f_{n_j} \big\|_{\mathfrak{H}}
  = \lim_{j \to \infty} \big\| R_{s_{n_j}} f_{n_j}-R f \big\|_{\mathfrak{H}}
  = 0
  \,,
$$
where the last equality follows from~\eqref{uniform}
and gives a contradiction with the positivity of~$c$.
\end{proof}
%

%---------------------------%
\subsection*{Acknowledgment}
%---------------------------%
%
This work was initiated when C.C.\ visited
the Department of Theoretical Physics at the Nuclear Physics Institute
of the Academy of Sciences of the Czech Republic in Re\v{z}.
C.C.\ wishes to acknowledge the hospitality
and the adequate research environment he received there.
The work was partially supported
by the project RVO61389005 and the GACR grant No.\ 14-06818S.
C.C.\ was additionally supported
by the project PN-II-ID-PCE-2011-3-0075 of  the Ministry of National Education,
CNCS-UEFISCDI Romania.
D.K.~also acknowledges the award from
the \emph{Neuron fund for support of science},
Czech Republic, May 2014.

%--------------%
% BIBLIOGRAPHY %
%--------------%
%
%{\footnotesize
\addcontentsline{toc}{section}{References}
\bibliography{bib}
\bibliographystyle{acm}
%\bibliographystyle{amsplain}
%}
%
\end{document}